\newtheorem{theorem}{Theorem}[section]
\newtheorem{lemma}[theorem]{Lemma}
\newtheorem{proposition}[theorem]{Proposition}
\numberwithin{equation}{section}
\newtheorem{thm}{Theorem}
\newtheorem{lem}{Lemma}
\newtheorem{prop}{Proposition}
\theoremstyle{definition}
\newcommand{\fip}{\varphi}
\newcommand{\ee}{\varepsilon}
\newcommand{\en}{{\varepsilon_n}}
\numberwithin{equation}{section} \numberwithin{lem}{section}
\numberwithin{thm} {section} \numberwithin{rem} {section}
\numberwithin{prop} {section} \numberwithin{cor} {section}
\newcommand{\Om}{\Omega}
\newcommand{\om}{\omega}
\newcommand{\R}{{\mathbb R}}
\newcommand{\Rd}{\mathbb{R}^2}
\newcommand{\de}{\partial}
\newcommand{\calP}{\mathcal P}
\newcommand{\Pda}{\mathcal P(d\alpha)}
\newcommand{\vn}{v_n}
\newcommand{\un}{u_n}
\newcommand{\wn}{w_n}
\newcommand{\hn}{h_n}
\newcommand{\lambdan}{\lambda_n}
\newcommand{\cn}{c_n}
\newcommand{\varphin}{\varphi_n}
\newcommand{\kappan}{\kappa_n}
\newcommand{\zn}{z_n}
\newcommand{\bz}{\bar z}
\newbox\tr@tto
\def\medint{\displaystyle\copy\tr@tto\kern-10.4pt\int}
\newcommand{\dmedint}{{}\hbox
{\vrule height 2,95pt depth -2,2pt width 6pt}\kern-0.94em }
\newcommand{\tmedint}{{}\hbox
{\vrule height 2,7pt depth -2,3pt width 5pt}\kern-8,5pt }
\newcommand{\smedint}{{}\hbox
{\vrule height 2,1pt depth -1,7 pt width 3pt}\kern-6,2pt }
\newcommand{\ssmedint}{{}\hbox
{\vrule height 1,7pt depth -1,3 pt width 3pt}\kern-6,2pt }
\def\cnum#1{\global\advance\ContatoreCostanti by 1 \xdef#1{C_{\number \ContatoreCostanti}}#1}
\begin{document}
\title{Mass quantization and minimax solutions 
for Neri's mean field equation in 2D-turbulence}
\author{T.~Ricciardi\thanks{Corresponding author} and G.~Zecca\\
\small{Dipartimento di Matematica e Applicazioni ``R.~Caccioppoli"}\\
\small{Universit\`{a} di Napoli Federico II, Via Cintia - 80126 Napoli - Italy}\\
\small{E-mail: tonricci@unina.it - g.zecca@unina.it }}
\date{June 11, 2014}
\maketitle
\begin{abstract}
We study the mean field equation derived by Neri in the context of the statistical mechanics description
of 2D-turbulence, under a ``stochastic" assumption on the vortex circulations.
The corresponding mathematical problem is a nonlocal semilinear elliptic equation
with exponential type nonlinearity, containing a probability measure $\calP\in\mathcal M([-1,1])$ 
which describes the distribution of the vortex circulations.
Unlike the more investigated ``deterministic" version, 
we prove that Neri's equation may be viewed as a perturbation 
of the widely analyzed standard mean field equation, obtained
by taking $\calP=\delta_1$.
In particular, in the physically relevant case where $\calP$ is non-negatively supported
and $\calP(\{1\})>0$, we prove the mass quantization for blow-up sequences.
We apply this result to construct minimax type solutions
on bounded domains in $\Rd$ and on compact 2-manifolds without boundary.
\bigskip
\par
\noindent {\small {\bf Key words and phrases:} Mean field equation, exponential nonlinearity, mass quantization,
mountain pass solution.}
\par
\noindent {\small {\bf 2010 Mathematics Subject Classification:}  76B03, 35B44, 76B47. }
\end{abstract}
\section{Introduction and statement of the main results.}
\label{sec:intro}
We are interested in the mean field equation derived by Neri~\cite{ne} in the
context of the statistical mechanics description of two-dimensional turbulence.
Such an approach was introduced in 1949 by Onsager
in the pioneering article~\cite{Onsager}, 
with the aim of explaining the formation of stable large-scale vortices,
and is still of central interest in fluid mechanics, see \cite{BouchetVenaille, Chavanis}.
\par
Neri's mean field equation \cite{ne} is derived under the ``stochastic" assumption that the point vortex
circulations are independent identically distributed random variables,
with probability distribution $\calP$.
On a bounded domain $\Omega\subset\Rd$, Neri's equation takes the form:
\begin{equation}
\label{eq:Neri}
\left\{\begin{split}
- \Delta u =&\lambda\,\int_{[-1,1]}\frac{\alpha e^{\alpha u}\,\calP (d\alpha)}
{\iint_{[-1,1]\times \Om} e^{\alpha u}\,\calP (d\alpha)dx}&&\text{ in }\Omega \\
 u=&0&&\mbox{ on }\de \Om.
\end{split}\right.
\end{equation}
Here, $u$ denotes the stream function, $\lambda>0$ is a constant related to the inverse temperature,
$dx$ is the volume element on $\Om$
and $\calP$ is a Borel probability measure defined on $[-1,1]$ denoting
the distribution of the circulations.
We note that when $\calP(d\alpha)=\delta_1(d\alpha)$, equation~\eqref{eq:Neri} 
reduces to the standard mean field equation
\begin{equation}
\label{eq:standardmfe}
\left\{\begin{split}
- \Delta u =&\lambda\,\frac{e^{u}}
{\int_{\Om} e^{u}\,dx}&&\text{ in }\Omega \\
 u=&0&&\mbox{ on }\de \Om.
\end{split}\right.
\end{equation}
Equation~\eqref{eq:standardmfe} has been extensively analyzed in the context of
turbulence in \cite{CLMP, Kiessling}. It is also relevant in many other contexts,
including the Nirenberg problem in differential geometry and the desciption of chemotaxis
in Biology. See, e.g., \cite{Lin,s1book}
and the references therein.
\par
On the other hand, a ``deterministic" assumption on the distribution of the vortex 
circulations yields the following similar equation
\begin{equation}
\label{SawadaSuzuki}
\left\{\begin{split}
- \Delta u =&\lambda\,\int_{-1}^1\frac{\alpha e^{\alpha u}\,\calP (d\alpha)}
{\int_{\Om} e^{\alpha u}\,dx}&&\text{ in }\Omega \\
 u=&0&&\mbox{ on }\de \Om,
\end{split}\right.
\end{equation}
see \cite{SawadaSuzuki}.
An unpublished informal
version of \eqref{SawadaSuzuki} was actually obtained by Onsager himself, see \cite{ES}.
Equation~\eqref{SawadaSuzuki} also includes the standard mean field equation~\eqref{eq:standardmfe}
as a special case.
\par
Thus, it is natural to ask for which probability measures
$\calP$ the results known for \eqref{eq:standardmfe} may be extended
to equations \eqref{eq:Neri} and \eqref{SawadaSuzuki},
and whether or not the equations \eqref{eq:Neri} and \eqref{SawadaSuzuki}
share similar properties.
We note that, up to a rescaling with respect to $\alpha$, we may assume without loss of generality
that
\begin{equation}
\label{assumpt:suppP1}
\mathrm{supp}\calP\cap\{-1,1\}\neq\emptyset.
\end{equation}
The ``deterministic" equation~\eqref{SawadaSuzuki} has been considered in \cite{ORS,RS}
from the point of view of the blow-up of solution sequences, and the optimal Moser-Trudinger constant.
Liouville systems corresponding to discrete versions of \eqref{SawadaSuzuki} have been
widely considered, see, e.g., \cite{CK, CSW,os3} and the references therein. In these articles, it appears that
equation~\eqref{SawadaSuzuki} behaves quite differently from \eqref{eq:standardmfe},
particularly from the point of view of the corresponding optimal Moser-Trudinger constant,
whose rather complicated expression depending on $\calP$ was recently determined in \cite{RS}.
\par
On the other hand, fewer mathematical results are available for \eqref{eq:Neri}. 
In \cite{s1book} it is conjectured as an open problem that the optimal Moser-Trudinger constant
for \eqref{eq:Neri} could depend on $\calP$.
However, in \cite{RZ} we proved that this is not the case.
More precisely, we showed that, assuming \eqref{assumpt:suppP1},
the optimal Moser-Trudinger constant for the corresponding variational functional
$J_\lambda$, given by
\begin{equation}
\label{eq:Nerifunctional}
J_\lambda(u)=\frac{1}{2}\int_\Om|\nabla u|^2\,dx-\lambda\log\left(\iint_{[-1,1]\times \Om} e^{\alpha u}\,\calP (d\alpha)dx\right)
\end{equation}
coincides with the optimal Moser-Trudinger constant for the ``standard" case $\mathcal P(d\alpha)=\delta_1(d\alpha)$.
In other words, $J_\lambda$
is bounded below if and only if $\lambda\leqslant8\pi$.
In this article  we further confirm the significant differences between \eqref{eq:Neri} and \eqref{SawadaSuzuki}, which could in principle provide a criterion to identify the more suitable model among 
\cite{ne} and \cite{SawadaSuzuki} to describe turbulent flows with variable intensities. More precisely,  we prove that,
under the additional assumption
\begin{align}
\label{calH}
&\mbox{supp} \mathcal P \subseteq [0,1],
&&\calP(\{1\})>0,
\end{align}
corresponding to the case where the vortices have the same orientation, as well as a non-zero probability
of unit circulation, the blow-up masses have quantized values $8\pi m$, $m\in\mathbb N$. 
To this end, we follow the elegant complex analysis approach in \cite{ye}. 
We note that the sinh-Poisson case $\mathcal P=\tau\delta_1+(1-\tau)\delta_{-1}$,
$\tau\in[0,1]$ was studied in \cite{RZ2}.
\par
Concerning the existence problem for equation~\eqref{eq:Neri}, we note that
Neri himself derived an existence result in the subcritical case 
$\lambda<8\pi$ by minimizing the functional~\eqref{eq:Nerifunctional}.
We shall here apply our blow-up results to prove the existence of saddle-type solutions in the supercritical case $\lambda>8\pi$,
following some ideas in  \cite{DJLW,StTa}. 
Such approaches employ the ``Struwe monotonicity trick" \cite{St} and
an improved Moser-Trudinger inequality in the sense of Aubin~\cite{Aub}.
\par
We now state our main results.
Let $\Omega\subset\Rd$ be a bounded domain and let $g$ be a metric on $\Om$.
We consider solution sequences to Neri's equation in the following  ``local'' form:
\begin{equation}
\label{eq:Nerilocal}
-\Delta_g\un =\lambdan\,\int_{[0,1]}\frac{\alpha e^{\alpha\un}\,\calP (d\alpha)}
{\iint_{[0,1]\times \Om} e^{\alpha\un}\,\calP (d\alpha)dv_g}+c_n\qquad\text{ in }\Omega 
\end{equation}
where $c_n\in\mathbb R$, $dv_g$ denotes the volume element 
and $\Delta_g$ denotes the Laplace-Beltrami operator. 
As usual, for every solution sequence $\un$ we define the blow-up set
\[
\mathcal S=\{x\in\Om\ s.t.\ \exists x_n\to x:\ \un(x_n)\to+\infty\}.
\]
\begin{thm}[Mass quantization]
\label{thm:massquantization}
Assume \eqref{calH}.
Let $\un$ be a solution sequence to \eqref{eq:Nerilocal} with $\lambdan\to\lambda_0$
and $c_n\to c_0$.
Then there exists a subsequence, still denoted $\un$, such that exactly one of the
following holds:
\begin{enumerate}
  \item [(i)]
$\un$ converges locally uniformly to a smooth solution $u_0$ for \eqref{eq:Nerilocal};
\item[(ii)]
$\un\to-\infty$ locally uniformly in $\Om$; 
\item[(iii)] The blow-up set $\mathcal S$ is finite and non-empty.
Denoting $\mathcal S=\{p_1,\ldots,p_m\}$, there holds 
\[
\lambdan\,\int_{[0,1]}\frac{\alpha e^{\alpha\un}\,\calP (d\alpha)}
{\iint_{[0,1]\times \Om} e^{\alpha\un}\,\calP (d\alpha)dv_g}\,dv_g
\stackrel{*}{\rightharpoonup}\sum_{1=1}^mn_i\delta_{p_i}+r(x)\,dv_g
\]
weakly in the sense of measures, for some $n_i\geqslant4\pi$, $i=1,\ldots,m$, and $r\in L^1(\Om)$.
In $\Om\setminus\mathcal S$ either $\un$ is locally bounded, or $\un\to-\infty$
locally uniformly.
\par
If $\un$ is locally bounded in $\Om\setminus\mathcal S$, then 
\[
\iint_{[0,1]\times\Om} e^{\un}\,dv_g\to+\infty,
\]
$r\equiv0$, $n_i=8\pi$ for all $i=1,\ldots,m$
and there exists $u_0\in W_{\textrm{loc}}^{1,q}(\Om)$
for any $q\in[1,2)$
such that $\un\to u_0$ in $W_{\textrm{loc}}^{1,q}(\Om)$
and locally uniformly in $\Om\setminus\mathcal S$.
The function $u_0$ is of the form
\[
u_0(x)=\sum_{j=1}^m\frac{1}{4}\log\frac{1}{d_g(x,p_i)}+b(x),
\]
where $b$ is locally bounded in $\Om$.
\end{enumerate}
\end{thm}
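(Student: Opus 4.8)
The plan is to treat Neri's equation \eqref{eq:Nerilocal} as a perturbation of the standard mean field equation and import the Brezis--Merle / Li--Shafrir / Ye type analysis. Write $V_n(x):=\int_{[0,1]}\alpha e^{\alpha\un(x)}\,\calP(d\alpha)$ and $\rho_n:=\iint_{[0,1]\times\Om}e^{\alpha\un}\,\calP(d\alpha)\,dv_g$, so that the right-hand side is $\lambdan V_n/\rho_n+c_n$. The key observation, already exploited in \cite{RZ,RZ2}, is that under \eqref{calH} the measure $\calP$ has an atom at $1$, hence $V_n(x)\geqslant \calP(\{1\})e^{\un(x)}$, and conversely $V_n(x)\leqslant e^{\un^+(x)}$ pointwise (since $\alpha\in[0,1]$ and $\alpha e^{\alpha t}\leqslant e^{t}$ for $t\geqslant0$, while for $t\leqslant 0$ one has $\alpha e^{\alpha t}\leqslant 1$). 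Thus $V_n$ is comparable to $e^{\un}$ up to lower-order terms, which lets one transfer all the standard estimates. First I would normalize: set $\widetilde u_n:=\un+\log(\lambdan/\rho_n)$ so that $-\Dg\widetilde u_n=\widetilde V_n+c_n$ with $\widetilde V_n$ comparable to $e^{\widetilde u_n}$ and $\int_\Om\widetilde V_n\,dv_g$ bounded (this uses that $\rho_n\geqslant\calP(\{1\})\int e^{\un}$, an $L^1$ bound on the nonlinearity).

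Next I would run the Brezis--Merle alternative for the sequence $\widetilde u_n$: by elliptic estimates with $L^1$ right-hand side plus the Brezis--Merle $\varepsilon$-regularity lemma, the blow-up set $\Ss$ where $\sup_{B_r(x)}\widetilde u_n\to+\infty$ for every $r$ is finite, and $\Om\setminus\Ss$ splits into the region where $\widetilde u_n$ is locally bounded and the region where $\widetilde u_n\to-\infty$; a connectedness argument (the nonlinearity has a sign) gives the trichotomy (i)--(iii) for $\un$ itself after untangling the normalization and the behavior of $c_n$ and $\rho_n$. Around each $p_i\in\Ss$ one obtains a local mass $n_i=\lim_{r\to0}\lim_n\int_{B_r(p_i)}(\lambdan V_n/\rho_n)\,dv_g$, and the Brezis--Merle lower bound for the nonlinearity — using $V_n\gtrsim e^{\un}$ — yields $n_i\geqslant 4\pi$ and the local representation of the limit measure as $\sum n_i\delta_{p_i}+r\,dv_g$ with $r\in L^1$. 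The residual $r$ comes precisely from the part of $\calP$ away from $\{1\}$ where $\alpha e^{\alpha\un}$ need not blow up; when $\un$ is bounded away from $\Ss$ one shows $\widetilde u_n$ is also bounded there, hence $\widetilde V_n\to0$ uniformly on compact subsets of $\Om\setminus\Ss$ (again because $\alpha e^{\alpha t}\to0$ as $t\to-\infty$ uniformly in $\alpha\in[0,1]$, or is bounded for $t$ bounded but then the $L^1$ mass concentrates), giving $r\equiv0$.

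The heart of the argument is the mass quantization $n_i=8\pi$, and here I would follow Ye's complex-analytic method \cite{ye} rather than Li--Shafrir's Pohozaev approach. After rescaling around $p_i$ one must show the bubble is a single standard Liouville bubble. The point is that, near a blow-up point where $\un\to+\infty$ on a scale, the $\alpha<1$ part of the measure becomes negligible compared to the $\alpha=1$ atom: more precisely, for any $\varepsilon>0$, $\int_{[0,1-\varepsilon]}\alpha e^{\alpha\un}\,\calP(d\alpha)=O\big(e^{(1-\varepsilon)M_n}\big)$ while $\calP(\{1\})e^{\un}$ dominates on the set where $\un\sim M_n\to\infty$, so the rescaled equation converges to the Liouville equation $-\Delta U=e^U$ on $\R^2$ with $\int_{\R^2}e^U<\infty$, whose only solutions are the standard bubbles with $\int e^U=8\pi$. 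Combining this with the fact that no bubble accumulation can occur (each $n_i\geqslant 4\pi$, a finite total mass, and a Harnack-type exclusion of "bubble towers" that is where Ye's argument is genuinely used) forces $n_i=8\pi$. The regularity $\un\to u_0$ in $W^{1,q}_{\mathrm{loc}}$ for $q<2$ and the logarithmic representation $u_0=\sum\frac14\log\frac1{d_g(\cdot,p_i)}+b$ then follow from the standard fact that a function solving $-\Dg u_0=\sum 8\pi\delta_{p_i}+(\text{bounded})$ has exactly this form (the coefficient $\frac14$ being $\frac{8\pi}{2\pi}\cdot\frac14$... i.e. $8\pi/(2\pi)=4$ times... — precisely, $-\Dg\log\frac1{|x|}=2\pi\delta_0$, so mass $8\pi$ gives coefficient $8\pi/(2\pi)\cdot$ wait: one needs $4\pi$ per... this is the routine Green's function computation and I would just cite it).

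The main obstacle I anticipate is making rigorous the claim that the $\alpha<1$ part of $\calP$ does not contribute to the blow-up profile and does not spoil Ye's argument: one must control $\int_{[0,1)}\alpha e^{\alpha\un}\,\calP(d\alpha)$ uniformly on the bubbling scale, and this requires knowing a priori that $\un$ does not merely blow up but blows up \emph{logarithmically} (so that $e^{(1-\varepsilon)\un}/e^{\un}\to0$ at a controlled rate after rescaling) — which is itself part of the conclusion. The way around this is bootstrapping: first establish the \emph{qualitative} concentration ($\Ss$ finite, local $L^1$ masses $n_i$) and a \emph{crude} profile ($\un$ is a standard-bubble-like profile to leading order) using only the two-sided comparison $V_n\asymp e^{\un}$ modulo lower order terms; then feed this crude profile back to show the $\alpha<1$ contribution is genuinely a perturbation on the bubbling scale; then apply Ye's complex-analysis quantization to the perturbed Liouville equation. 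A secondary technical point is handling the metric $g$ and the term $c_n$: $c_n\to c_0$ is harmless (it contributes a smooth term), and $\Dg$ versus $\Delta$ is handled by isothermal coordinates, so these are routine. Thus the real work is the perturbation estimate feeding into Ye's quantization, and the expository burden is checking that \cite{ye}'s lemmas survive the presence of the probability measure.
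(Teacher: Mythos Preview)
Your Brezis--Merle reduction (the normalization, the $\varepsilon$-regularity giving finiteness of $\mathcal S$ and the $4\pi$ minimal mass, the Harnack dichotomy off $\mathcal S$, and the Fatou-type argument showing $\int e^{\un}\to\infty$ when $\un$ is bounded below off $\mathcal S$) is essentially what the paper does in Proposition~\ref{prop:localblowup} and Lemma~\ref{lem:minmass}, so that part is fine.

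The quantization step, however, is where your proposal diverges from the paper and becomes muddled. You say you will ``follow Ye's complex-analytic method'' but then describe a rescaling/bubble-classification argument (blow up at $p_i$, get convergence to a Liouville bubble on $\R^2$, exclude bubble towers by Harnack). That is not Ye's method; that is the Li--Shafrir/Chen--Lin line of attack. Ye's argument, which is what the paper actually uses in Proposition~\ref{prop:reductiontoYe}, involves no rescaling at all: one writes the equation as $-\Delta_g\un=\kappan f(\un)+c_n$ with $\kappan\to0$ and $f(t)=e^t+o(e^t)$, defines the complex-valued function $S=\tfrac12(u_n)_z^2+N_z\ast\{e^\xi\chi_{B_\rho}[W(u_n)]_z\}$ (here $W'=f$), checks $\partial_{\bar z}S=0$ so that $S$ is holomorphic, passes to the limit, and reads off $n_i=8\pi$ by matching the coefficients of the poles at $p_i$ in the Laurent expansion. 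No bubble profile, no scale, no tower exclusion.

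Because of this, the ``main obstacle'' you identify --- controlling $\int_{[0,1)}\alpha e^{\alpha\un}\,\calP(d\alpha)$ uniformly on the bubbling scale, and the attendant bootstrap --- is an artifact of the rescaling approach you sketched, not of Ye's method. In the paper's route, the entire contribution of $\calP|_{[0,1)}$ is absorbed into the single asymptotic statement $f(t)=(\calP(\{1\}))^{-1}\int_{[0,1]}\alpha e^{\alpha t}\calP(d\alpha)=e^t+o(e^t)$ as $t\to+\infty$ (Claim~A in the proof of Proposition~\ref{prop:reductiontoYe}), which is an elementary dominated-convergence estimate on $\calP$, and the companion statement $\kappan\int_{[0,1)}e^{\alpha\un}\calP(d\alpha)\stackrel{*}{\rightharpoonup}0$ (Claim~B). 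Once those two claims are in hand, Ye's holomorphic-function argument runs verbatim for the perturbed nonlinearity $f$. So your proposed bootstrap is unnecessary; the correct fix is to rewrite the equation in the form $-\Delta_g\un=\kappan f(\un)+c_n$ with $f$ an $e^t$-type nonlinearity and then invoke \cite{ye} directly.
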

As an application of Theorem~\ref{thm:massquantization},
we derive the existence of minimax type solutions in the supercritical range $\lambda>8\pi$.
Our first existence result is derived in the spirit of \cite{DJLW}.
\begin{thm}
[Existence of a minimax solution on annulus-type domains]
\label{main-link}
Let $\Om\subset \Rd$ be a smooth, bounded domain whose complement
contains a bounded region and assume \eqref{assumpt:suppP1}.  
Then, problem~ \eqref{eq:Neri} admits a solution $u\in H_0^{1,2}(\Om)$ for almost every $\lambda\in(8\pi,16\pi)$.
Furthermore, if $\calP$ satisfies \eqref{calH}, then  \eqref{eq:Neri} admits a solution $u\in H_0^{1,2}(\Om)$
for all $\lambda\in(8\pi,16\pi)$.
\end{thm}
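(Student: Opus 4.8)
The plan is to proceed in two steps: first establish solvability of \eqref{eq:Neri} for almost every $\lambda\in(8\pi,16\pi)$ assuming only \eqref{assumpt:suppP1}, via a minimax scheme in the spirit of \cite{DJLW,StTa} combined with the Struwe monotonicity trick \cite{St}; then, assuming in addition \eqref{calH}, upgrade to every such $\lambda$ by a limiting argument in which the mass quantization of Theorem~\ref{thm:massquantization} is used to rule out blow-up.

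\emph{Step 1 (a.e.\ $\lambda$).} Work with the functional $J_\lambda$ of \eqref{eq:Nerifunctional} on $H_0^{1,2}(\Om)$, whose critical points are exactly the solutions of \eqref{eq:Neri}. By \cite{RZ}, $J_\lambda$ is bounded below precisely for $\lambda\le 8\pi$; for $\lambda>8\pi$ one has $\inf_{H_0^{1,2}}J_\lambda=-\infty$, realized along functions concentrating at an interior point of $\Om$. Following the localization method of \cite{Aub,DJLW} and handling the $\calP$-perturbation as in \cite{RZ2}, one obtains an Aubin-type improved Moser--Trudinger inequality: for disjoint open sets $A_1,A_2\subset\Om$ and $\delta>0$ there is $C$ with $J_\lambda(u)\ge-C$ whenever $\lambda<16\pi$ and $\iint_{[-1,1]\times A_j}e^{\alpha u}\calP(d\alpha)dx\ge\delta\iint_{[-1,1]\times\Om}e^{\alpha u}\calP(d\alpha)dx$ for $j=1,2$. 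Since $\Om$ is of annulus type, its complement containing a bounded region, the low sublevel sets $\{J_\lambda\le-L\}$ (nonempty for $L$ large, by the concentrating test functions) carry the nontrivial topology of $\Om$; as in \cite{DJLW} this produces a $\lambda$-independent minimax class $\Gamma$ and a minimax level $c_\lambda=\inf_{h\in\Gamma}\sup_\sigma J_\lambda(h(\sigma))$ which is finite, strictly above the sublevel used for the linking, and bounded uniformly for $\lambda$ in compact subintervals of $(8\pi,16\pi)$. Because $\tfrac1{2\lambda}\int_\Om|\nabla u|^2\,dx$ is nonincreasing in $\lambda$, so is $\lambda\mapsto c_\lambda/\lambda$; hence $c_\lambda/\lambda$ is differentiable for a.e.\ $\lambda$, and at each such $\lambda$ Struwe's lemma yields a Palais--Smale sequence for $J_\lambda$ at level $c_\lambda$ that is bounded in $H_0^{1,2}(\Om)$. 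By the minimax structure together with the improved inequality, a bounded Palais--Smale sequence at level $c_\lambda$ is precompact when $\lambda<16\pi$, so it converges to a critical point of $J_\lambda$, i.e.\ a solution of \eqref{eq:Neri}. This gives solvability for a.e.\ $\lambda\in(8\pi,16\pi)$.

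\emph{Step 2 (all $\lambda$, under \eqref{calH}).} Fix $\lambda\in(8\pi,16\pi)$ and choose $\lambda_n\to\lambda$ in the full-measure set from Step~1, with $u_n\in H_0^{1,2}(\Om)$ solving \eqref{eq:Neri} at $\lambda=\lambda_n$; regard $u_n$ as a local solution of \eqref{eq:Nerilocal} with Euclidean metric and $c_n\equiv0$. By \eqref{calH} the right-hand side of \eqref{eq:Neri} is nonnegative, so $u_n$ is superharmonic with zero boundary value, hence $u_n\ge0$ in $\Om$; this excludes the alternatives of Theorem~\ref{thm:massquantization} in which $u_n\to-\infty$ somewhere. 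Since $\int_\Om(-\Delta u_n)\,dx\le\lambda_n$ is bounded, the standard boundary estimate (Brezis--Merle together with a moving-plane argument near $\de\Om$) gives $\Ss\subset\subset\Om$ and no concentration of $-\Delta u_n$ at $\de\Om$. Suppose $u_n$ blows up; then we are in alternative (iii) with $u_n$ locally bounded in $\Om\setminus\Ss$, so $n_i=8\pi$, $r\equiv0$ and $E_n:=\int_\Om e^{u_n}\,dx\to+\infty$. With $\tau:=\calP(\{1\})>0$, Hölder's inequality gives $\int_\Om e^{\alpha u_n}\,dx\le|\Om|^{1-\alpha}E_n^{\alpha}$ for $\alpha\in[0,1]$, whence
\[
\frac{\iint_{[0,1]\times\Om}(1-\alpha)\,e^{\alpha u_n}\,\calP(d\alpha)\,dx}{\iint_{[0,1]\times\Om}e^{\alpha u_n}\,\calP(d\alpha)\,dx}
\le\frac{1}{\tau E_n}\int_{[0,1)}(1-\alpha)\,|\Om|^{1-\alpha}E_n^{\alpha}\,\calP(d\alpha)
=\frac1\tau\int_{[0,1)}(1-\alpha)\Big(\tfrac{|\Om|}{E_n}\Big)^{1-\alpha}\calP(d\alpha)\ \xrightarrow[n\to\infty]{}\ 0
\]
by dominated convergence (the integrand is bounded by $1$ once $E_n\ge|\Om|$ and tends pointwise to $0$). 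Hence the moment ratio $R_n:=\iint_{[0,1]\times\Om}\alpha e^{\alpha u_n}\calP(d\alpha)dx\,/\,\iint_{[0,1]\times\Om}e^{\alpha u_n}\calP(d\alpha)dx$ satisfies $R_n\to1$. But $\lambda_nR_n=\int_\Om(-\Delta u_n)\,dx$, which by the weak-$*$ convergence in (iii) (no mass escaping to $\de\Om$) tends to $8\pi m$; together with $\lambda_n\to\lambda$ this forces $\lambda=8\pi m$, impossible for $\lambda\in(8\pi,16\pi)$. Therefore $u_n$ does not blow up, and by Theorem~\ref{thm:massquantization}(i) (alternative (ii) being excluded by $u_n\ge0$), together with the boundary estimate and elliptic regularity, a subsequence of $u_n$ converges in $C^1(\overline\Om)$ to some $u_0\in H_0^{1,2}(\Om)$ solving \eqref{eq:Neri} at the prescribed $\lambda$.

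The main obstacle is Step~2. For Neri's normalization the total mass of the right-hand side of \eqref{eq:Neri} equals $\lambda_nR_n$, with $R_n<1$ in general, so the clean identity ``total blow-up mass $=\lambda_n$'' available for the standard equation \eqref{eq:standardmfe} breaks down; assumption \eqref{calH} is precisely what forces $R_n\to1$ along a blowing-up sequence, restoring the quantization $\lambda=8\pi m$ and hence the contradiction (and underlining once more the contrast with \eqref{SawadaSuzuki}). In Step~1 the delicate points are instead the compatibility of the DJLW linking with the $\calP$-perturbed improved Moser--Trudinger inequality and the $\lambda$-uniformity required to run Struwe's trick.
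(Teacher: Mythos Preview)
Your proposal is correct and follows the same two-step strategy as the paper: a DJLW-type linking combined with Struwe's monotonicity trick for the a.e.\ existence, followed by a limiting argument using mass quantization for the full range under~\eqref{calH}.

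A few points of comparison are worth noting. In Step~1, the paper's minimax class $\mathcal D_\lambda$ is \emph{not} $\lambda$-independent: the boundary condition~(i) in its definition involves $J_\lambda$, and one only has the monotonicity $\mathcal D_{\lambda_1}\subseteq\mathcal D_{\lambda_2}$ for $\lambda_1<\lambda_2$ (Lemma~\ref{lemma-dom-3}), which is exactly what Proposition~\ref{struwe} requires. Also, the improved Moser--Trudinger inequality is used to establish $c_\lambda>-\infty$ via the barycenter concentration Lemma~\ref{baricentri}, not to obtain Palais--Smale compactness; the latter follows simply from the compactness of $G'$ (the Moser--Trudinger embedding), as in Lemma~\ref{proprg-uno} and the proof of Proposition~\ref{struwe}.

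In Step~2, your H\"older argument for $R_n\to1$ is a clean direct alternative to the paper's route. The paper instead recasts~\eqref{eq:NeriDirichlet} as $-\Delta u_n=\kappa_n f(u_n)$ with $f(t)=e^t+o(e^t)$, and shows (Claim~B in the proof of Proposition~\ref{prop:reductiontoYe}) that the primitive satisfies $\kappa_n F(u_n)\stackrel{*}{\rightharpoonup}\sum_i n_i\delta_{p_i}$ as well; since $\int_\Omega\kappa_n F(u_n)\,dx=\lambda_n$ \emph{identically}, one reads off $\lambda_0=8\pi m$ directly, bypassing the explicit ratio $R_n$. Your approach reaches the same conclusion and perhaps makes the role of $\calP(\{1\})>0$ more transparent. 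In both arguments the no-boundary-blow-up Lemma~\ref{lem:nobdryblowup} (your moving-plane remark) is what allows testing the weak-$*$ limit against the constant function~$1$.
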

We also consider solutions to Neri's equation on a compact orientable Riemannian
surface without boundary $M$.
On the manifold~$M$, the corresponding problem is given by
\begin{equation}
\label{EL0}
\left\{
\begin{split}
-\Delta_g v=& \lambda\,\int_{[-1,1]}\frac{\alpha (e^{\alpha v}- \frac{1}{|M|} \int_M e^{\alpha v}\,dv_g)\,}
{\iint_{[-1,1]\times M} e^{\alpha v}\,\calP (d\alpha)\,dv_g}\calP (d\alpha)&&\text{ in }M  \\
\int_{M }v\,dv_g=&0,
\end{split}%
\right.   
\end{equation}
Here, $g$ denotes the Riemannian metric on $M$, $dv_g$ denotes the volume element 
and $\Delta_g$ denotes the Laplace-Beltrami operator.
We note that the proof of Theorem~\ref{main-link} may be adapted to problem~\eqref{EL0} provided
$M$ has genus greater than or equal to one. However, in this case it is not clear
in general
whether or not the solution obtained is distinct from the trivial solution $u\equiv0$. 
See \cite{COS} for some results and conjectures in this direction.
On the other hand, a nontrivial solution in the supercritical range of $\lambda$ for general
manifolds may be obtained by the argument introduced in \cite{StTa}.
In order to state our second existence result, we denote by $\mu_1(M)$ the first non-zero eigenvalue of $\Delta_g$, namely 
\begin{equation}
\label{mu1}
\mu _{1}(M ):=\inf_{\phi\neq 0,\text{ }\phi\in \mathcal E}\frac{\int_{M
}\left\vert \nabla  \phi \right\vert ^{2}}{\int_{M }\phi ^{2}},
\end{equation}
where $\mathcal E=\{v\in H^1(M):\ \int_Mv\,dv_g=0\}$.
We prove:
\begin{thm}[Mountain-pass solution on manifolds]
\label{main-mp}
Let $\mathcal{P}$ satisfy \eqref{assumpt:suppP1}
and let $M$ be such that 
$\frac{\mu _{1}(M )\left\vert M \right\vert}{\int_{I}\alpha ^{2}\mathcal{P}(d\alpha) }>8\pi$. 
Then, for almost every  $\lambda\in \left(8\pi,
\frac{\mu _{1}(M )\left\vert M \right\vert}{\int_{I}\alpha ^{2}\mathcal{P}(d\alpha) } \right)$ there exists a non-trivial 
solution to problem~\eqref{EL0}.
Furthermore, if $\calP$ satisfies \eqref{calH} and if $M $ is such that 
$\frac{\mu _{1}(M )\left\vert M \right\vert}{\int_{I}\alpha ^{2}\mathcal{P}(d\alpha) }\in(8\pi,16\pi)$,
then problem~(\ref{EL0}) admits a non-trivial solution  
for every $\lambda \in\left(8\pi,
\frac{\mu _{1}(M )\left\vert M \right\vert}{\int_{I}\alpha ^{2}\mathcal{P}(d\alpha) } \right)$.
\end{thm}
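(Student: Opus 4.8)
\noindent\emph{Proof strategy.}
The plan is to realize solutions of \eqref{EL0} as critical points on $\mathcal E$ of
\[
J_\lambda(v)=\frac12\int_M|\nabla v|^2\,dv_g-\lambda\log\Big(\iint_{[-1,1]\times M}e^{\alpha v}\,\calP(d\alpha)\,dv_g\Big),
\]
obtained by a mountain pass argument, and then to upgrade the resulting almost-everywhere statement to every $\lambda$ by means of the quantization of Theorem~\ref{thm:massquantization}. Set $\Lambda:=\frac{\mu_1(M)|M|}{\int_I\alpha^2\calP(d\alpha)}$. First I would check the mountain pass geometry on $8\pi<\lambda<\Lambda$. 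Since every element of $\mathcal E$ has zero average, the linear term in the Taylor expansion of $\log\iint e^{\alpha v}\calP(d\alpha)\,dv_g$ about $v=0$ drops; in particular $v\equiv0$ is a critical point (it is exactly the trivial solution of \eqref{EL0}), and
\[
J_\lambda(v)=-\lambda\log|M|+\tfrac12\|\nabla v\|_{L^2}^2-\frac{\lambda}{2|M|}\Big(\int_I\alpha^2\,\calP(d\alpha)\Big)\|v\|_{L^2}^2+o(\|v\|_{H^1}^2),
\]
the remainder controlled by the Moser--Trudinger inequality. By the characterization \eqref{mu1} the quadratic part is positive definite on $\mathcal E$ precisely for $\lambda<\Lambda$, so $v\equiv0$ is a strict local minimum for such $\lambda$; on the other hand the sharp Moser--Trudinger threshold proved in \cite{RZ} shows $J_\lambda$ is unbounded below for $\lambda>8\pi$, so (using test functions concentrating at a point) there is $v_\ast\in\mathcal E$ with $J_\lambda(v_\ast)<J_\lambda(0)$. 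The resulting mountain pass value $c_\lambda$, over paths joining $0$ to $v_\ast$, satisfies $c_\lambda\ge J_\lambda(0)+\delta_\lambda$ for some $\delta_\lambda>0$ that stays bounded away from $0$ on compact subintervals of $(8\pi,\Lambda)$.

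The next step is the Struwe monotonicity trick. Writing $J_\lambda(v)+\lambda\log|M|=\tfrac12\|\nabla v\|_{L^2}^2-\lambda\,\tilde G(v)$ with $\tilde G(v):=\log\big(|M|^{-1}\iint e^{\alpha v}\calP(d\alpha)\,dv_g\big)\ge0$ (Jensen), the map $\lambda\mapsto c_\lambda+\lambda\log|M|$ is non-increasing, hence differentiable at almost every $\lambda\in(8\pi,\Lambda)$; at each such $\lambda$ the standard argument of \cite{St}, as in \cite{DJLW,StTa}, produces a \emph{bounded} Palais--Smale sequence for $J_\lambda$ at level $c_\lambda$. A bounded Palais--Smale sequence converges strongly in $H^1(M)$, since $H^1$-boundedness gives, via Moser--Trudinger, compactness of $v\mapsto e^{\alpha v}$ in every $L^p(M)$ uniformly in $\alpha$. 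Thus for a.e.\ $\lambda\in(8\pi,\Lambda)$ one gets a critical point $v_\lambda$ with $J_\lambda(v_\lambda)=c_\lambda>J_\lambda(0)$, hence $v_\lambda\not\equiv0$; this is the first assertion.

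To pass to every $\lambda$ under \eqref{calH} with $\Lambda\in(8\pi,16\pi)$, fix $\lambda\in(8\pi,\Lambda)$, choose $\lambda_k\to\lambda$ in the full-measure set above with nontrivial solutions $v_k:=v_{\lambda_k}$, and note that $J_{\lambda_k}(v_k)=c_{\lambda_k}$ is bounded and $\ge J_{\lambda_k}(0)+\delta$ for a uniform $\delta>0$. If $(v_k)$ is bounded in $H^1(M)$, it converges along a subsequence to a solution $v_\lambda$ of \eqref{EL0} at $\lambda$ with $J_\lambda(v_\lambda)\ge J_\lambda(0)+\delta$, hence $v_\lambda\not\equiv0$ and we are done. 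If instead $\|v_k\|_{H^1}\to\infty$, I would apply Theorem~\ref{thm:massquantization}, since in local charts \eqref{EL0} takes the form \eqref{eq:Nerilocal}: alternatives (i)--(ii) are incompatible with $\int_Mv_k\,dv_g=0$ and $\|v_k\|_{H^1}\to\infty$, so alternative (iii) holds, the blow-up set $\mathcal S=\{p_1,\dots,p_m\}$ is finite and non-empty, the $L^1$ residual vanishes, and each $n_i=8\pi$; writing $h_k$ for the density on the right-hand side of \eqref{EL0}, this means $h_k\,dv_g\stackrel{*}{\rightharpoonup}8\pi\sum_{i=1}^m\delta_{p_i}$, so the total mass $m_k:=\int_M h_k\,dv_g\to8\pi m$. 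Since \eqref{calH} gives $\alpha\in[0,1]$, $m_k$ is $\lambda_k$ times a weighted average of $\alpha\in[0,1]$, so $0\le m_k\le\lambda_k\to\lambda<16\pi$, forcing $m=1$ and $m_k\to8\pi$. On the other hand, because $\calP(\{1\})>0$, near the single blow-up point the nonlinearity is driven by the atom at $\alpha=1$ and $v_k$ is asymptotically a standard bubble; a direct estimate on the bubble profile gives $\int_Me^{\alpha v_k}\,dv_g=o\big(\int_Me^{v_k}\,dv_g\big)$ for every $\alpha<1$, and together with the uniform bound $\int_Me^{\alpha v_k}\,dv_g\le2\int_Me^{v_k}\,dv_g$ this yields, by dominated convergence on $\{\alpha<1\}$,
\[
\frac{\iint_{[-1,1]\times M}\alpha e^{\alpha v_k}\,\calP(d\alpha)\,dv_g}{\iint_{[-1,1]\times M}e^{\alpha v_k}\,\calP(d\alpha)\,dv_g}\longrightarrow1,
\qquad\text{so}\qquad m_k=\lambda_k(1+o(1))\longrightarrow\lambda.
\]
Comparing with $m_k\to8\pi$ gives $\lambda=8\pi$, contradicting $\lambda>8\pi$. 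Hence $(v_k)$ is bounded, and the previous argument produces a nontrivial solution for every $\lambda\in(8\pi,\Lambda)$.

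The mountain pass geometry and the monotonicity trick are routine. The crux is the blow-up step: transferring Theorem~\ref{thm:massquantization} to the closed surface $M$ and verifying there that $n_i=8\pi$ (not merely $\ge4\pi$) with no surviving $L^1$ residual, including the case $v_k\to-\infty$ off $\mathcal S$ that the domain statement does not directly cover; and the quantitative bubble estimate showing that, under \eqref{calH}, the sub-maximal circulations $\alpha<1$ contribute negligibly to both the numerator and the denominator of the nonlinearity near a bubble, so that the effective blow-up mass equals $\lambda$ rather than a $\lambda$-weighted fraction of it that could agree with $8\pi$.
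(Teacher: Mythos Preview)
Your overall plan coincides with the paper's: mountain-pass geometry on $\mathcal E$, Struwe's monotonicity trick to get solutions for almost every $\lambda$, and then the quantization to pass to every $\lambda$ under \eqref{calH}. The paper packages the last step as a direct appeal to its manifold-version quantization (Proposition~\ref{prop:manifoldmassquantization}): if blow-up occurs then $\lambda_0\in 8\pi\mathbb N$, which is impossible in $(8\pi,16\pi)$. Two points are worth noting by comparison with your write-up.

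First, your worry about the sub-case ``$v_k\to-\infty$ off $\mathcal S$'' can be dismissed cheaply on a closed surface under \eqref{calH}. Writing $-\Delta_g v_k=g_k-c_k$ with $g_k\ge0$, $\int_M g_k\le\lambda_k$ and $c_k$ a bounded constant, the Green representation $v_k(x)=\int_M G_M(x,y)\,g_k(y)\,dv_g(y)$ (using $\int_M G_M(x,\cdot)=0$) together with the lower bound $G_M\ge -C$ gives $v_k\ge -C\lambda_k$ uniformly. So $v_k$ is bounded below, the ``locally bounded'' branch of Theorem~\ref{thm:massquantization}(iii) applies, and the residual $r$ vanishes with $n_i=8\pi$.

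Second, you do not need any bubble-profile analysis to conclude $m_k\to\lambda$. The paper's route is cleaner: from $\iint e^{\alpha v_k}\to\infty$ one has $\kappa_k:=\lambda_k\calP(\{1\})/\iint e^{\alpha v_k}\to0$, and the elementary splitting $\int_M e^{\alpha v_k}\le e^{\alpha T}|M|+e^{-(1-\alpha)T}\int_M e^{v_k}$ shows $\kappa_k\int_{[0,1)}\!\int_M e^{\alpha v_k}\,\calP(d\alpha)\to0$. Hence both $\kappa_k f(v_k)$ and $\kappa_k F(v_k)$ (with $f,F$ as in Proposition~\ref{prop:reductiontoYe}) weak-$\ast$ converge to the same sum of Dirac masses; since $\int_M\kappa_k F(v_k)\,dv_g=\lambda_k$ identically, testing against $1$ on the compact $M$ gives $\lambda_0=8\pi m$ directly, without first reducing to $m=1$. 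Your argument is correct but takes an unnecessary detour.
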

We organize this article as follows.
In Section~\ref{sec:compactness} we prove the mass quantization
for blow-up sequences, as stated in  Theorem~\ref{thm:massquantization}.
In Section~\ref{sect-mt} we establish an improved Moser-Trudinger inequality for
the Neri functional~\eqref{eq:Nerifunctional}, on the line of Aubin~\cite{Aub}.
In Section~\ref{sec:struwe} we derive the Struwe's Monotonicity trick,
originally introduced in \cite{St} to construct bounded Palais-Smale sequences,
in a form suitable for application to both Theorem~\ref{main-link}
and Theorem~\ref{main-mp}.
Our version of the monotonicity trick is therefore somewhat more general than the versions in \cite{DJLW,StTa}.
It should be mentioned that the monotonicity trick itself has
attracted a considerable interest, and very general versions 
have been recently derived in \cite{JJ, JT, Sq}. Here, we choose to derive Struwe's argument in a specific form
best suited 
to our applications, which also allows us to explicitly exhibit the corresponding deformations. 
Applying these results, in Section~\ref{sec:minimax} we prove Theorem~\ref{main-link}
and in Section~\ref{sec:mp} we prove Theorem~\ref{main-mp},
suitably adapting the ideas in \cite{DJLW} and \cite{StTa} respectively, 
in order to
take into account of the probability measure $\mathcal P$.
\paragraph*{Notation}
Here and below, $\Om\subset\Rd$ always denotes a smooth bounded domain and
$M$ always denotes a compact Riemannian 2-manifold without boundary.
All integrals are taken with respect to the standard Lebesgue measure.
When the integration measure is clear from the context,
we may omit it for the sake of clarity.
We denote by $C>0$ a general constant whose actual value may vary from line to line.
For every real number $t$ we set  $t^+=\max\{0,t\}$.
\section{Mass quantization and proof of Theorem~\ref{thm:massquantization}}
\label{sec:compactness}
In this section we analyze the blow-up behavior of solution sequences for \eqref{eq:Nerilocal}.
Unlike the approaches in \cite{NS, ORS, RZ}, where the cases of Dirichlet boundary conditions 
and of 
compact manifolds  without boundary are considered, we establish our blow-up results
in a more flexible local form, in the spirit of \cite{bm}.
We prove the mass quantization extending the complex analysis approach introduced in \cite{BBH, ye}.
\par
In order to prove Theorem~\ref{thm:massquantization}, we begin by establishing
a Brezis-Merle type alternative for equations with probability measures.
More precisely, let $\Om\subset\Rd$ be a bounded domain.
We consider solution sequences to the equation
\begin{equation}
\label{eq:Nerinobdry}
-\Delta\un=\int_{[0,1]}V_{\alpha,n}(x)\,e^{\alpha\un}\,\Pda
+\varphin\qquad\mathrm{in\ } \Om.
\end{equation}
where $\varphin \in L^\infty(\Om).$
We begin by proving the existence of a ``minimal mass" necessary for blow-up to occur.
\begin{proposition}[Brezis-Merle alternative]
\label{prop:localblowup}
Assume $\calP$ satisfies \eqref{calH} and suppose the following bounds hold:
\begin{enumerate}
  \item [(1)]
 $0\leqslant V_{\alpha,n}\leqslant C$, $\|\varphin\|_{L^\infty(\Om)}\leqslant C$;
  \item [(2)]
 $\iint_{[0,1]\times\Om}V_{\alpha,n}(x)\,e^{\alpha\un}\,\Pda dx\leqslant C$
\item [(3)]
$\|\un^+\|_{L^1(\Om)}\leqslant C$.
\end{enumerate}
Then, exactly one of the following alternatives holds true:
\begin{enumerate}
  \item[(i)] $\un$ converges locally uniformly in $\Om$ to a bounded function $u_0$;
  \item[(ii)] $\un\to-\infty$ locally uniformly in $\Om$;
  \item[(iii)] There exists a finite set $\mathcal S=\{p_1,\ldots,p_m\}\subset\Om$
such that 
\[
\int_{[0,1]}V_{\alpha,n}(x)\,e^{\alpha\un}\,\Pda\,dx
\stackrel{*}{\rightharpoonup}\sum_{i=1}^mn_i\delta_{p_i}+r(x)\,dx,
\]
weakly in the sense of measures,
with $n_i\geqslant4\pi$, $i=1,\ldots,m$ and $r\in L^1(\Om)$.
Moreover, $\un^+$ is locally uniformly bounded in $\Om\setminus\mathcal S$.
\par
If $\un$ is also locally uniformly bounded from below in $\Om\setminus\mathcal S$,
then $\int_{B_\rho(p_j)}e^{\un}\,dx\to+\infty$ for any ball $B_\rho(p_j)\subset\Om$.
In particular, we have
\[
\iint_{[0,1]\times\Om}e^{\alpha\un}\,\Pda dx\to+\infty.
\]
\end{enumerate}
\end{proposition}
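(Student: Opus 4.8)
The plan is to follow the classical Brezis--Merle scheme, adapted to the nonlocal setting via the probability measure $\calP$, using the key structural feature of hypothesis~\eqref{calH}: since $\mathrm{supp}\,\calP\subseteq[0,1]$ and $\calP(\{1\})>0$, the ``worst'' exponential contribution is governed by the $\alpha=1$ mode, while all $\alpha\in(0,1)$ modes are dominated by it on the set where $\un$ is large and positive. First I would set $f_n(x):=\int_{[0,1]}V_{\alpha,n}(x)\,e^{\alpha\un}\,\Pda$, so that $-\Delta\un = f_n+\varphin$ with $\|f_n\|_{L^1(\Om)}\leqslant C$ by hypothesis~(2) and $\|\varphin\|_{L^\infty}\leqslant C$ by (1). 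One extracts a subsequence so that $f_n\,dx \stackrel{*}{\rightharpoonup}\mu$ for some bounded nonnegative measure $\mu$ on $\Om$. Define the (provisional) blow-up set $\mathcal S=\{p\in\Om:\ \mu(\{p\})\geqslant 4\pi\}$; since $\mu$ is finite, $\mathcal S$ is finite, say $\mathcal S=\{p_1,\dots,p_m\}$.

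Next I would prove the local boundedness of $\un^+$ away from $\mathcal S$. Fix $x_0\in\Om\setminus\mathcal S$ and a small ball $B_{2\rho}(x_0)\Subset\Om$ with $\mu(\overline{B_{2\rho}(x_0)})<4\pi$. Split $\un = u_n^{(1)}+u_n^{(2)}+u_n^{(3)}$ on $B_{2\rho}(x_0)$, where $u_n^{(1)}$ solves $-\Delta u_n^{(1)}=f_n$ in $B_{2\rho}$ with zero boundary data, $u_n^{(2)}$ solves $-\Delta u_n^{(2)}=\varphin$ with zero boundary data, and $u_n^{(3)}$ is harmonic with the boundary trace of $\un$. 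The Brezis--Merle inequality applied to $u_n^{(1)}$ (with mass $<4\pi$) gives a uniform bound on $\int_{B_\rho}e^{(1+\delta)|u_n^{(1)}|}$ for some $\delta>0$; $u_n^{(2)}$ is uniformly bounded by elliptic estimates; and $u_n^{(3)}$ is controlled from above on $B_\rho$ by the mean value property together with hypothesis~(3) (its positive part has bounded $L^1$ norm, so by Harnack-type arguments for harmonic functions the positive part is locally bounded above). Feeding this back into the definition of $f_n$ — crucially using $0\leqslant V_{\alpha,n}\leqslant C$ and $\alpha\un\leqslant \un^+$ for $\alpha\in[0,1]$ — yields $f_n\in L^p_{\mathrm{loc}}(\Om\setminus\mathcal S)$ uniformly for some $p>1$, hence by elliptic regularity $\un^+$ is locally uniformly bounded in $\Om\setminus\mathcal S$.

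With $\un^+$ locally bounded off $\mathcal S$, one distinguishes cases according to the behavior of $\inf_K\un$ on compacta $K\Subset\Om\setminus\mathcal S$. If $\mathcal S=\emptyset$: either $\un$ stays locally bounded below (then $\un$ is locally bounded, $f_n$ is locally bounded in $L^p$, elliptic theory gives local uniform convergence to a bounded $u_0$ — case~(i)), or along a subsequence $\inf_K\un\to-\infty$; a Harnack inequality for $\un-(\text{bounded harmonic part})$, or the standard argument that a superharmonic-up-to-$L^\infty$ sequence bounded above with diverging infimum must diverge everywhere, forces $\un\to-\infty$ locally uniformly — case~(ii). If $\mathcal S\neq\emptyset$, the same dichotomy holds in $\Om\setminus\mathcal S$, giving case~(iii); the measure decomposition $f_n\,dx\stackrel{*}{\rightharpoonup}\sum n_i\delta_{p_i}+r\,dx$ with $r\in L^1$ follows by writing $\mu = \mu\llcorner\mathcal S + \mu\llcorner(\Om\setminus\mathcal S)$ and noting the second part is absolutely continuous since $f_n$ is locally $L^p$-bounded off $\mathcal S$, while $n_i=\mu(\{p_i\})\geqslant4\pi$ by construction. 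For the final assertion, suppose $\un$ is also locally bounded below off $\mathcal S$; I would argue by contradiction: if $\int_{B_\rho(p_j)}e^{\un}\,dx$ stayed bounded, then since $\calP(\{1\})>0$ and $V_{\alpha,n}\geqslant 0$ one has $f_n\geqslant \calP(\{1\})\,V_{1,n}\,e^{\un}$ near $p_j$, and combined with $V_{1,n}$ bounded away from zero (which requires a short argument — see below) this would give $\int_{B_\rho(p_j)}f_n\,dx$ bounded, forcing $n_j<4\pi$ if the ball is small, a contradiction. Hence $\int_{B_\rho(p_j)}e^{\un}\to+\infty$, and summing against $\calP$ yields $\iint_{[0,1]\times\Om}e^{\alpha\un}\,\Pda\,dx\to+\infty$.

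The main obstacle I anticipate is the very last step: hypotheses~(1)--(3) only give $0\leqslant V_{\alpha,n}\leqslant C$ from above, with no uniform lower bound on $V_{1,n}$, so the inequality $f_n\gtrsim e^{\un}$ near a blow-up point is not immediate. The resolution should be that in the intended application~\eqref{eq:Nerilocal} the coefficient $V_{\alpha,n}$ equals $\lambdan\big/\iint e^{\alpha\un}\Pda\,dv_g$, which is bounded and, on the relevant subsequence, bounded below away from zero precisely when $\iint e^{\alpha\un}$ does \emph{not} blow up — so the dichotomy ``either the denominator blows up, or $V_{1,n}$ is bounded below'' is built into the problem. One must therefore either add a mild lower-bound hypothesis on $V_{1,n}$ (or on $\liminf\iint e^{\alpha\un}$), or carry the denominator explicitly through the argument; I would make this precise by splitting according to whether $\iint_{[0,1]\times\Om}e^{\alpha\un}\Pda\,dx$ is bounded or not, observing that boundedness forces $V_{1,n}\geqslant c>0$, hence $f_n\gtrsim e^{\un}$, hence (by the local $L^1$ bound on $f_n$) no blow-up, contradicting $\mathcal S\neq\emptyset$ — which is exactly the stated conclusion. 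A secondary technical point worth care is ensuring the Brezis--Merle estimate is applied with the correct sign convention and that the harmonic part $u_n^{(3)}$ is controlled \emph{above} (not in absolute value) using only hypothesis~(3); this is standard but must be stated cleanly since, unlike in the closed-manifold case, there is no a priori two-sided $L^1$ bound on $\un$.
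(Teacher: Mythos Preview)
Your overall architecture---extract a weak-$*$ limit $\mu$ of $f_n\,dx$, define $\mathcal S=\{p:\mu(\{p\})\geqslant 4\pi\}$, use the Brezis--Merle inequality (Lemma~\ref{lem:bmestimate}) together with the harmonic splitting and hypothesis~(3) to bound $\un^+$ locally off $\mathcal S$, then invoke Harnack for the trichotomy---is correct and matches the paper (this is exactly the content of Lemma~\ref{lem:minmass} and the opening paragraph of the paper's proof).

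The genuine gap is in your final step, and you have correctly diagnosed it but proposed the wrong repair. Your contradiction argument for $\int_{B_\rho(p_j)}e^{\un}\to+\infty$ requires $V_{1,n}\geqslant c>0$, which is \emph{not} among the hypotheses of the proposition; your suggested patch (split on whether $\iint e^{\alpha\un}$ is bounded, and use the specific form of $V_{\alpha,n}$ from \eqref{eq:Nerilocal}) rescues the application but not the proposition as stated. The paper avoids this entirely with a direct comparison argument that makes no use of any lower bound on $V_{\alpha,n}$: since $\un\geqslant -C$ on $\partial B_\rho(p_i)$, let $\zn$ solve the same equation in $B_\rho(p_i)$ with constant boundary datum $-C$; the maximum principle gives $\un\geqslant\zn$. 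Then $\zn\to z$ in $W^{1,q}$, $q<2$, where $-\Delta z = n_i\delta_{p_i}+r$ with $z=-C$ on the boundary, so $z(x)\geqslant \frac{n_i}{2\pi}\log|x-p_i|^{-1}-C\geqslant \log|x-p_i|^{-2}-C$ because $n_i\geqslant 4\pi$. Hence $e^z\notin L^1(B_\rho(p_i))$, and Fatou yields
\[
\liminf_{n\to\infty}\int_{B_\rho(p_i)}e^{\un}\,dx\;\geqslant\;\liminf_{n\to\infty}\int_{B_\rho(p_i)}e^{\zn}\,dx\;\geqslant\;\int_{B_\rho(p_i)}e^{z}\,dx=+\infty.
\]
The conclusion $\iint_{[0,1]\times\Om}e^{\alpha\un}\,\Pda\,dx\to+\infty$ then follows from $\calP(\{1\})>0$ exactly as you wrote. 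The point is that the minimal mass $4\pi$ is already enough to make the \emph{limit} Newtonian potential non-exponentially-integrable; no information about $V_{\alpha,n}$ beyond the $L^1$ bound on $f_n$ enters.
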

Once Proposition~\ref{prop:localblowup} is established, setting
\[
V_{\alpha,n}(x)={\alpha }^{-1}{\iint_{[0,1]\times\Om}e^{\alpha\un}\,\Pda dx},
\]
we readily derive alternatives
(i)--(ii) and the first part of alternative~(iii) in Theorem~\ref{thm:massquantization}.
In order to complete the proof of alternative~(iii) in Theorem~\ref{thm:massquantization}, 
we need to show that if
$\mathcal S\neq\emptyset$, then $r\equiv0$ and $n_i=8\pi$ for all $i=1,\ldots,m$.
To this end, we prove that along a blow-up sequence \eqref{eq:Nerilocal} is 
equivalent to a nonlinear equation to which the complex analysis argument 
in \cite{ye} may be applied.
More precisely, we show:
\begin{proposition}
\label{prop:reductiontoYe}
Let $(\lambdan,\un)$ be a solution sequence for \eqref{eq:Nerilocal}
with $\lambdan\to\lambda_0$.
Assume that $\mathcal S\neq\emptyset$ and $\un\geqslant-C$ for some $C>0$.
Then $\un$ satisfies the equation
\begin{equation}
\label{eq:Ye}
-\Delta_g\un=\kappan f(\un)+c_n\qquad\textrm{in}\ \Om,
\end{equation}
for some $f(t)=e^t+o(e^t)$ as $t\rightarrow +\infty$, and for some $\kappan\to0$.
Passing to a subsequence, we have $\un\to u_0$ in $W_{\textrm{loc}}^{1,q}(\Om)$ for all $q\in[1,2)$,
where
\begin{equation*}
u_0(x)=\frac{1}{4}\sum_{i=1}^n\log\frac{1}{d_g(x,p_i)}+b(x)
\end{equation*}
for some $b\in L_{\textrm{loc}}^\infty(\Om)$.  
At every blow-up point $p_i$, $i=1,\ldots,m$, we have 
\begin{equation}
\label{eq:blowuppointpositions}
\nabla\left(b(p_i)+\frac{1}{4}\sum_{j\neq i}\log\frac{1}{d_g(p_i,p_j)}\right)=-\nabla\xi(p_i)
\end{equation}
where $\xi$ is the conformal factor defined by
$g=e^{\xi(x)}(dx_1^2+dx_2^2)$. The blow-up masses satisfy $n_{p_i}=8\pi$, $i=1,\ldots,m$ and
\begin{align*}
&\kappan f(\un)\stackrel{*}{\rightharpoonup}8\pi\sum_{i=1}^n\delta_{p_i},
\end{align*}
weakly in the sense of measures.
\end{proposition}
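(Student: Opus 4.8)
\emph{Proof strategy.} The plan is to exhibit \eqref{eq:Nerilocal}, along the blow-up sequence, as a small perturbation of the Liouville-type equation analysed in \cite{ye}, and then to import Ye's conclusions. First I would set $\Lambda_n:=\iint_{[0,1]\times\Om}e^{\alpha\un}\,\Pda\,dv_g$; since $\un\geqslant-C$ and $\alpha\in[0,1]$ we have $e^{\alpha\un}\geqslant e^{-C}$, so $\Lambda_n$ is bounded below away from $0$, and the atom of $\calP$ at $1$ gives the pointwise bound $\int_{[0,1]}\alpha e^{\alpha\un}\,\Pda\geqslant\calP(\{1\})e^{\un}$. Applying Proposition~\ref{prop:localblowup} in local isothermal coordinates with potential $V_{\alpha,n}=\lambdan\alpha/\Lambda_n$ (whose hypotheses follow from these bounds and $\lambdan\to\lambda_0$), the assumption $\Ss\neq\emptyset$ excludes alternatives (i)--(ii), and since $\un$ is bounded below alternative (iii) holds with $\un$ locally bounded below, forcing $\Lambda_n\to+\infty$. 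Setting
\[
f(t):=\frac{1}{\calP(\{1\})}\int_{[0,1]}\alpha e^{\alpha t}\,\Pda,\qquad \kappan:=\frac{\lambdan\calP(\{1\})}{\Lambda_n}\longrightarrow 0,
\]
equation \eqref{eq:Nerilocal} becomes exactly \eqref{eq:Ye}. Separating the atom at $\alpha=1$ gives $f(t)/e^t=1+\calP(\{1\})^{-1}\int_{[0,1)}\alpha e^{(\alpha-1)t}\,\Pda$, whose integral tends to $0$ as $t\to+\infty$ by dominated convergence (the integrand is $\leqslant 1$ for $t\geqslant 0$ and $\to 0$ pointwise), so $f(t)=e^t+o(e^t)$; and for $t\geqslant-C$ one has $e^{(\alpha-1)t}\leqslant e^{C}$, giving the uniform two-sided bound $e^t\leqslant f(t)\leqslant C_0 e^t$ on $[-C,+\infty)$ with $C_0:=1+e^{C}/\calP(\{1\})$ (note also $f'>0$).

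Next I would pass to isothermal coordinates, $g=e^{\xi}(dx_1^2+dx_2^2)$ and $\Dg=e^{-\xi}\Delta$, so that \eqref{eq:Ye} reads $-\Delta\un=W_n e^{\un}+\psi_n$ locally, with $W_n:=e^{\xi}\kappan f(\un)/e^{\un}$ and $\psi_n:=e^{\xi}\cn$. By the first step $f(\un)/e^{\un}\in[1,C_0]$ wherever $\un\geqslant-C$, so $0\leqslant W_n\leqslant C\kappan\to 0$ uniformly on compact sets, $\|\psi_n\|_{\infty}\leqslant C$, and $W_ne^{\un}=e^{\xi}\kappan f(\un)$ is bounded in $L^1\loc$ by Proposition~\ref{prop:localblowup}. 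That proposition also gives that $\un^+$ is locally bounded in $\Om\setminus\Ss$, which with $\un\geqslant-C$ makes $\un$ locally uniformly bounded there; elliptic regularity then yields $\un\to u_0$ in $C^1\loc(\Om\setminus\Ss)$ along a subsequence. Since $\kappan\to 0$, $\kappan f(\un)\to 0$ locally uniformly in $\Om\setminus\Ss$, so the weak-$*$ limit of $\kappan f(\un)\,dv_g$ is purely atomic: $r\equiv 0$ and $\kappan f(\un)\,dv_g\stackrel{*}{\rightharpoonup}\sum_i n_i\delta_{p_i}$. As the right-hand side of \eqref{eq:Ye} is bounded in $L^1\loc$, the standard elliptic $L^1$-theory gives $\un\to u_0$ in $W^{1,q}\loc(\Om)$ for all $q\in[1,2)$, with $-\Dg u_0=\sum_i n_i\delta_{p_i}+c_0$ in $\mathcal D'(\Om)$; writing $u_0$ through the fundamental solution of $-\Dg$ then produces the asserted form of $u_0$, the logarithmic coefficients being fixed once the masses $n_i$ are known.

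It remains to prove $n_i=8\pi$ and the balancing condition \eqref{eq:blowuppointpositions}, and here I would follow \cite{ye}. Fixing a blow-up point $p_i$ and working in isothermal coordinates centred there on a ball $B$ free of other blow-up points, $\un$ is bounded on $\de B$ and $\un\to u_0$ in $C^1(\overline B\setminus\{p_i\})$; decomposing $\un$ into the concentrating part (the $v$ solving $-\Delta v=W_n e^{\un}$ in $B$, $v=0$ on $\de B$) plus a uniformly bounded part puts us exactly in the framework of \cite{ye}. There, since the effective potential $W_n$, normalised by its maximum, is asymptotically constant, no multiple bubbling occurs, the local mass satisfies $\int_B W_n e^{\un}\,dx\to 8\pi$, and the complex-analytic Pohozaev invariant of \cite{ye} — the residue at $p_i$ of the holomorphic-type function associated with $\un$ — vanishes; passing to the limit in this residue identity, using $\un\to u_0$ on $\overline B\setminus\{p_i\}$ and $\nabla\log W_n\to\nabla\xi$, yields precisely \eqref{eq:blowuppointpositions}. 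Undoing the conformal change, $\int_B W_n e^{\un}\,dx=\int_B e^{\xi}\kappan f(\un)\,dx\to n_i$, so $n_i=8\pi$, and with the previous step $\kappan f(\un)\,dv_g\stackrel{*}{\rightharpoonup}8\pi\sum_i\delta_{p_i}$. The hard part is not the compactness (routine given Proposition~\ref{prop:localblowup}) but checking that Ye's complex-analytic computation is stable, uniformly in $n$, under the two perturbations specific to Neri's model — the $o(e^t)$ gap between $f$ and $e^t$ and the bounded lower-order term $\psi_n$; the $\calP$-specific ingredients that make this work are precisely the lower bound on $\Lambda_n$ and the atom-driven bound $e^t\leqslant f(t)\leqslant C_0 e^t$ obtained in the first step.
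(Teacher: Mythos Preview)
Your overall strategy matches the paper's: define $f$ and $\kappan$ exactly as you do, show $\kappan\to 0$ via Proposition~\ref{prop:localblowup}, prove $f(t)=e^t+o(e^t)$, and then run Ye's complex-analytic argument to obtain quantization and the balance condition. The compactness steps (convergence in $W^{1,q}\loc$, $r\equiv 0$, purely atomic limit) are as in the paper.

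The gap is in the reduction to Ye. Your detour through $-\Delta u_n=W_ne^{u_n}+\psi_n$ with $W_n=e^\xi\kappan f(u_n)/e^{u_n}$ is problematic: this $W_n$ is \emph{solution-dependent} and degenerates to $0$, so Li--Shafrir type results (which need $W_n\to W_0>0$ in $C^0$) do not apply, and your claim ``$\nabla\log W_n\to\nabla\xi$'' would require controlling $\bigl(f'(u_n)/f(u_n)-1\bigr)\nabla u_n$ near blow-up points, where $|\nabla u_n|\to\infty$. Since Ye's own framework is already $-\Delta u=\lambda f(u)$ with general $f$, there is no gain in recasting it as a variable-potential Liouville equation.

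The paper stays in Ye's form and absorbs the two perturbations you flag (the conformal factor $e^\xi$ and the additive constant $c_n$) by introducing the \emph{primitive} $F(t)=\calP(\{1\})^{-1}\int_{[0,1]}e^{\alpha t}\,\Pda$ and setting $W(t)=\kappa F(t)+ct$, so that $-\Delta u=e^\xi W'(u)$. Ye's holomorphic quantity then reads $S=\tfrac12 u_z^2+N_z*\{e^\xi\chi_{B_\rho}[W(u)]_z\}$, and passing to the limit in the convolution term requires the auxiliary fact (the paper's Claim~B) that $\kappan F(u_n)\stackrel{*}{\rightharpoonup}\sum_i n_i\delta_{p_i}$ --- the \emph{same} weak-$*$ limit as $\kappan f(u_n)$ --- which in turn uses $F(t)=e^t+o(e^t)$, not just the asymptotics of $f$. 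Once the limit $S_0$ is holomorphic, balancing the $z^{-2}$ and $z^{-1}$ singularities gives $n_i=8\pi$ and \eqref{eq:blowuppointpositions} directly, with no separate ``no multiple bubbling'' step needed. This use of $F$ and Claim~B is precisely the ingredient that makes Ye's computation stable under the perturbations you name, and it is missing from your sketch.
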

Theorem~\ref{thm:massquantization} will follow as a direct consequence of Proposition~\ref{prop:localblowup}
and Proposition~\ref{prop:reductiontoYe}.
We proceed towards the proof of Proposition~\ref{prop:localblowup}.
We recall the following well-known basic estimate.
\begin{lemma}[\cite{bm}]
\label{lem:bmestimate}
Let $\Om\subset\Rd$ be a bounded domain and let $-\Delta u=f$ in $\Om$,
$u=0$ on $\partial\Om$, with $\|f\|_{L^1(\Om)}<+\infty$.	
Then, for any $\eta\in(0,1)$ we have
\[
\int_\Om \exp\left\{\frac{4\pi(1-\eta)}{\|f\|_{L_1(\Om)}}|u|\right\}\,dx\leqslant\frac{\pi}{\eta}(\mathrm{diam}\,\Om)^2.
\]
\end{lemma}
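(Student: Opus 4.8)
The plan is to reduce to the case of a nonnegative right-hand side and then to compare $u$ with the Newtonian potential of $|f|$, whose exponential integrability is classical. First I would observe that it suffices to prove the estimate for $f\geqslant0$: writing $f=f^+-f^-$ and letting $u_\pm$ solve $-\Delta u_\pm=f^\pm$ in $\Om$ with zero boundary data, the maximum principle gives $u_\pm\geqslant0$ and $|u|\leqslant u_++u_-$, while $\|f^+\|_{L^1}+\|f^-\|_{L^1}=\|f\|_{L^1}$; a convexity (or H\"older) argument on $\exp\{c(u_++u_-)\}$ with the weights $\|f^\pm\|_{L^1}/\|f\|_{L^1}$ then recombines the two one-signed estimates into the claimed bound. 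So assume $f\geqslant0$, hence $u\geqslant0$ in $\Om$ by the maximum principle.

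Next I would represent $u$ through the Green's function: $u(x)=\int_\Om G_\Om(x,y)f(y)\,dy$, and bound $G_\Om(x,y)\leqslant\frac{1}{2\pi}\log\frac{1}{|x-y|}+C_\Om$ — more cleanly, compare with the potential on a ball of radius $R=\mathrm{diam}\,\Om$ containing $\Om$, so that $0\leqslant u(x)\leqslant\frac{1}{2\pi}\int_\Om\log\frac{R}{|x-y|}f(y)\,dy$. Normalizing $d\mu(y)=f(y)\,dy/\|f\|_{L^1}$ to a probability measure supported in $\Om$, this reads $u(x)\leqslant\frac{\|f\|_{L^1}}{2\pi}\int\log\frac{R}{|x-y|}\,d\mu(y)$. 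Then for the exponent $\beta:=4\pi(1-\eta)/\|f\|_{L^1}$ we get
\[
\exp\{\beta u(x)\}\leqslant\exp\left\{2(1-\eta)\int\log\frac{R}{|x-y|}\,d\mu(y)\right\}\leqslant\int\left(\frac{R}{|x-y|}\right)^{2(1-\eta)}d\mu(y),
\]
by Jensen's inequality applied to the convex function $t\mapsto e^{t}$ against the probability measure $\mu$ (equivalently, $\exp$ of an average is at most the average of $\exp$).

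Integrating in $x$ over $\Om$ and using Fubini, the matter reduces to the elementary bound
\[
\int_\Om\left(\frac{R}{|x-y|}\right)^{2(1-\eta)}dx\leqslant\int_{B_R(y)}\left(\frac{R}{|x-y|}\right)^{2(1-\eta)}dx=R^{2(1-\eta)}\cdot2\pi\int_0^R \rho^{1-2(1-\eta)}\,d\rho=\frac{\pi}{\eta}R^2,
\]
valid since $1-2(1-\eta)=2\eta-1>-1$ for $\eta\in(0,1)$; as this bound is uniform in $y$ and $\mu$ is a probability measure, the stated inequality follows. The only point requiring care — and the step I would single out as the main (minor) obstacle — is the passage from the Green's function on a general bounded domain to the Newtonian kernel on a ball: one must either invoke the standard upper bound $G_\Om(x,y)\leqslant G_{B_R}(x,y)$ (monotonicity of Green's functions under domain inclusion, after extending $f$ by zero) or simply replace $u$ by the solution on $B_R\supset\Om$, which dominates $u$ by the maximum principle and has the explicit kernel above. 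Everything else is Jensen plus a one-dimensional integral.
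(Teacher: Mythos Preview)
Your argument is correct and is essentially the original proof of Brezis and Merle~\cite{bm}: comparison with the Newtonian potential on a ball of radius $R=\mathrm{diam}\,\Om$, Jensen's inequality against the probability measure $f\,dy/\|f\|_{L^1}$, and the explicit radial integral yielding $\pi R^2/\eta$. The reduction to $f\geqslant0$ via H\"older with exponents $p=\|f\|_{L^1}/\|f^+\|_{L^1}$, $q=\|f\|_{L^1}/\|f^-\|_{L^1}$ is also the standard one. Note, however, that the paper does not actually prove this lemma: it is merely \emph{recalled} as a well-known estimate with a citation to~\cite{bm}, so there is no ``paper's proof'' to compare with beyond the original reference, whose argument you have reproduced.
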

Using Lemma~\ref{lem:bmestimate} we can show the existence of a minimal mass for blow-up for 
equations containing a probability measure.
Let $\Om\subset\Rd$ be a bounded domain. Let $\un$ be a solution to \eqref{eq:Nerinobdry} and let
\[
\nu_n=\iint_{[0,1]\times\Om}V_{\alpha,n}(x)e^{\alpha\un}\,\Pda dx.
\]
In view of assumption~(2) in Proposition~\ref{prop:localblowup}, 
passing to a subsequence there exists $\nu_0\in\mathcal M(D)$
such that $\nu_n\stackrel{*}{\rightharpoonup}\nu_0$ weakly in the sense of measures.
The next lemma states that a minimal mass $4\pi$ is necessary for blow-up to occur.
\begin{lemma}[Minimal mass for blow-up]
\label{lem:minmass}
Let $\un$ be a solution to \eqref{eq:Nerinobdry}.
Suppose $\|V_{\alpha,n}\|_{L^\infty(\Om)}\leqslant C$, $\|\un^+\|_{L^1(\Om)}\leqslant C$,
$\iint_{[0,1]\times\Om}V_{\alpha,n}(x)e^{\alpha\un}\,\Pda dx\leqslant C$, $\|\varphin\|_{L^\infty(\Om)}\leqslant C$
and suppose $x_0\in \Om$ is such that $\nu_0(\{x_0\})<4\pi$.
Then, there exists $\rho_0>0$ such that $\|\un^+\|_{L^\infty(B_{\rho_0}(x_0))}\leqslant C$.
\end{lemma}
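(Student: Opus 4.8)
The plan is to follow the classical Brezis--Merle localization argument, adapted to accommodate the integral over the probability measure $\calP$. The starting point is the hypothesis $\nu_0(\{x_0\})<4\pi$: by outer regularity of the limit measure and weak-$*$ convergence, I can fix $\rho>0$ small enough that $B_{2\rho}(x_0)\subset\Om$ and $\limsup_n\iint_{[0,1]\times B_{2\rho}(x_0)}V_{\alpha,n}e^{\alpha\un}\,\Pda\,dx<4\pi$; hence for $n$ large, $\iint_{[0,1]\times B_{2\rho}(x_0)}V_{\alpha,n}e^{\alpha\un}\,\Pda\,dx\le 4\pi(1-2\eta)$ for some fixed $\eta\in(0,1)$. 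Next I split $\un$ on $B_{2\rho}(x_0)$ as $\un=\un^{(1)}+\un^{(2)}$, where $\un^{(1)}$ solves $-\Delta\un^{(1)}=\int_{[0,1]}V_{\alpha,n}e^{\alpha\un}\,\Pda+\varphin$ in $B_{2\rho}(x_0)$ with zero boundary data, and $\un^{(2)}$ is harmonic (more precisely, $-\Delta\un^{(2)}=0$), matching $\un$ on the boundary, so that $\un^{(2)}$ absorbs the boundary trace of $\un$.

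The first step after the splitting is to control $\un^{(1)}$. Since the right-hand side driving $\un^{(1)}$ has $L^1$-norm bounded by $\iint V_{\alpha,n}e^{\alpha\un}\,\Pda\,dx+\|\varphin\|_{L^\infty}|B_{2\rho}|\le 4\pi(1-2\eta)+C\rho^2$, which for $\rho$ small remains below $4\pi(1-\eta)$, I apply Lemma~\ref{lem:bmestimate} on $B_{2\rho}(x_0)$ to obtain a uniform bound $\int_{B_{2\rho}(x_0)}e^{p|\un^{(1)}|}\,dx\le C$ for some $p>1$. The second step controls $\un^{(2)}$: being harmonic and equal on the boundary to $\un-\un^{(1)}$, by the mean value property and the hypotheses $\|\un^+\|_{L^1}\le C$ together with the $L^1$-control on $\un^{(1)}$ (which follows from the previous step), one gets $(\un^{(2)})^+\le C$ pointwise on, say, $B_\rho(x_0)$ — the standard estimate being that a harmonic function is controlled on a smaller ball by its $L^1$-norm on the larger ball, and here one only needs the positive part since the lower bound is not claimed. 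The third step combines these: on $B_\rho(x_0)$ we have $e^{\alpha\un}\le e^{\alpha\un^{(1)}}e^{\alpha(\un^{(2)})^+}\le C\,e^{\un^{(1)}}$ uniformly in $\alpha\in[0,1]$ (using $\alpha\le1$ and $\un^{(1)}\ge$ its positive part appropriately; more carefully, split again the sign of $\un^{(1)}$), so $\int_{[0,1]}V_{\alpha,n}e^{\alpha\un}\,\Pda$ is bounded in $L^p(B_\rho(x_0))$ for that same $p>1$. A final elliptic regularity bootstrap on $-\Delta\un=\int V_{\alpha,n}e^{\alpha\un}\,\Pda+\varphin$, whose right-hand side is now bounded in $L^p$ with $p>1$, yields $\un\in W^{2,p}\hookrightarrow C^0$ locally, hence $\|\un^+\|_{L^\infty(B_{\rho_0}(x_0))}\le C$ for some $\rho_0<\rho$.

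The main obstacle, and the only place where the probability measure genuinely enters, is the third step: ensuring that the pointwise bound on $e^{\alpha\un}$ and the resulting $L^p$ bound on $\int_{[0,1]}V_{\alpha,n}e^{\alpha\un}\,\Pda$ are uniform with respect to $\alpha\in[0,1]$. This is where assumption \eqref{calH} — specifically $\mathrm{supp}\,\calP\subseteq[0,1]$, so that $\alpha\ge0$ and $e^{\alpha\un^{(2)}}\le e^{\alpha(\un^{(2)})^+}\le e^{(\un^{(2)})^+}$ — is used to avoid any blow-up contribution from negative $\alpha$ or from the negative part of $\un$; the bound $\alpha\le 1$ then converts $e^{\alpha\un^{(1)}}$ into something integrable against the $L^p$-estimate for $e^{|\un^{(1)}|}$. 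Once this uniform-in-$\alpha$ control is in place, integrating $\Pda$ (a probability measure, total mass one) is harmless and the argument closes exactly as in the scalar Brezis--Merle case.
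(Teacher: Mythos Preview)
Your proposal is correct and follows essentially the same approach as the paper: localize to a small ball where the mass is below $4\pi$, split $u_n$ into a Dirichlet part (the paper's $w_n$, your $u_n^{(1)}$) and a harmonic remainder (the paper's $h_n$, your $u_n^{(2)}$), apply Lemma~\ref{lem:bmestimate} to the former and the mean-value property to the positive part of the latter, then bootstrap. The only imprecision is your very last sentence: applying interior $W^{2,p}$ estimates directly to $u_n$ would require a uniform bound on $\|u_n\|_{L^1}$, which you do not have (only $\|u_n^+\|_{L^1}\le C$); the clean fix---which is exactly what the paper does---is either to apply the $L^\infty$ estimate to the zero-boundary Dirichlet piece once its right-hand side is in $L^p$, or simply to conclude $u_n^+\le (u_n^{(2)})^+ + |u_n^{(1)}|$ from the bounds you already established on each summand.
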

\begin{proof}
Let $\ee_0, \rho_0>0$ be such that 
$\iint_{[0,1]\times B_{\rho_0}(x_0)}|V_{\alpha,n}(x)|e^{\alpha\un}\,\Pda dx\leqslant4\pi(1-2\ee_0)$
and $\|\varphin\|_{L^\infty(B_{\rho_0}(x_0))}|B_{\rho_0}(x_0)|\leqslant4\pi\ee_0$.
Let $\wn$ be defined by
\begin{equation}
\label{eq:wn}
\left\{
\begin{split}
-\Delta\wn=&\int_{[0,1]}V_{\alpha,n}(x)e^{\alpha\un}\,\Pda+\varphin
&&\mathrm{in\ }B_{\rho_0}(x_0)\cr
\wn=&0&&\mathrm{on\ }\partial B_{\rho_0}(x_0).
\end{split}
\right.
\end{equation}
Setting $\psi_n=\int_{[0,1]}V_{\alpha,n}(x)e^{\alpha\un}\,\Pda+\varphin$,
we have $\|\psi_n\|_{L^1(B_{\rho_0}(x_0))}\leqslant4\pi(1-\ee_0)$.
By elliptic estimates, $\|\wn\|_{L^1(B_{\rho_0}(x_0))}\leqslant C$.
In view of Lemma~\ref{lem:bmestimate} we derive for every $\eta\in(0,1)$ that
\[
\int_{B_{\rho_0}(x_0)}\exp\left\{\frac{1-\eta}{1-\ee_0}|\wn|\right\}\leqslant\frac{4\pi\rho_0^2}{\eta}.
\]
Choosing $\eta<\ee_0^2$, we find $\|\psi_n\|_{L^{1+\ee_0}(B_{\rho_0}(x_0))}\leqslant C$.
\par
On the other hand, the function
$\hn:=\un-\wn$ is harmonic in $B_{\rho_0}(x_0)$ and 
\[
\|\hn^+\|_{L^1(B_{\rho_0}(x_0))}\leqslant\|\un^+\|_{L^1(\Om)}+\|\wn\|_{L^1(B_{\rho_0}(x_0))}\leqslant C.
\]
Hence, the mean value theorem implies that $\|\hn^+\|_{L^\infty(B_{\rho_0/2}(x_0))}\leqslant C$.
Inserting into \eqref{eq:wn}, we find $\|\wn\|_{L^\infty(B_{\rho_0}(x_0))}\leqslant C\|\psi_n\|_{L^{1+\ee_0}(B_{\rho_0}(x_0))}\leqslant C$.
Finally, we have
\[
\|\un^+\|_{L^\infty(B_{\rho_0/2}(x_0))}\leqslant\|\hn^+\|_{L^\infty(B_{\rho_0/2}(x_0))}+\|\wn\|_{L^\infty(B_{\rho_0}(x_0))}\leqslant C.
\]
Since $\rho_0$ is arbitrary, the asserted local uniform boundedness of $\un$ is established.
\end{proof}
\begin{proof}[Proof of Proposition~\ref{prop:localblowup}]
In view of Harnack's inequality, if $\mathcal S=\emptyset$ then (i) or (ii)
hold.
Therefore, we assume $\mathcal S\neq\emptyset$.
By Harnack's inequality, either $\un$ is locally bounded from below in $\Om\setminus\mathcal S$,
or $\un\to-\infty$ locally uniformly in $\Om\setminus\mathcal S$. Let $p_i\in\mathcal S$ and let $\rho>0$ be such that
$\overline{B_\rho(p_i)}\cap\mathcal S=\{p_i\}$.
We assume that $\un\geqslant-C$ on $\partial B_\rho(p_i)$. Similarly as in \cite{bm}, we define
\begin{equation*}
\left\{
\begin{split}
-\Delta\zn=&\int_{[0,1]}V_{\alpha,n}(x)\,e^{\alpha\un}\,\Pda
+\varphin&&\textrm{in\ } B_\rho(p_i)\\
\zn=&-C&&\textrm{on\ }\partial B_\rho(p_i).
\end{split}
\right.
\end{equation*}
Then, $\un\geqslant\zn$ in $\overline{B_\rho(p_i)}$.
On the other hand, $\zn\to z$ in $W^{1,q}(B_\rho(p_i))$ for all
$q\in[1,2)$, with $z\geqslant \log |x-p_i|^{-2}-C$.
By Fatou's lemma, we conclude that
$\int_{B_\rho(p_i)}e^{\un}\,dx\to+\infty$.
In view of assumption~\eqref{calH} we derive in turn that
\begin{equation}
\label{lim:iinttoinfty}
\iint_{[0,1]\times\Om} e^{\alpha\un}\,\Pda dx\geqslant\calP(\{1\})\int_\Om e^{\un}\,dx\to+\infty.
\end{equation}
\end{proof}
\begin{proof}[Proof of Propostion~\ref{prop:reductiontoYe}]
Since $g$ is given in isothermal coordinates, namely $g=e^{\xi(x_1,x_2)}(dx_1^2+dx_2^2)$,
then \eqref{eq:Nerilocal} takes the form
\[
-\Delta\un=\lambdan e^\xi\int_{[0,1]}\frac{\alpha e^{\alpha\un}}{\iint_{[0,1]\times\Om}e^{\alpha\un}}\,\Pda+e^\xi\cn.
\]
We apply Proposition~\ref{prop:localblowup}--(iii) 
with $V_{\alpha,n}=e^\xi\lambdan\alpha(\iint_{[0,1]\times\Om}e^{\alpha\un}\,\Pda\,dv_g)^{-1}$
and $\varphin=e^\xi\cn$.
Since $\un\geqslant-C$, we conclude that
\[
\iint_{[0,1]\times\Om}e^{\alpha\un}\,\Pda dv_g\geqslant C^{-1}>0.
\]
Moreover, \eqref{lim:iinttoinfty} holds.
We define
\begin{align*}
\label{def:f(t)}
f(t):=(\calP(\{1\}))^{-1}\int_{[0,1]}\alpha e^{\alpha t}\,\Pda
\end{align*}
and
\begin{equation*}
\label{def:kappan}
\kappan:=\frac{\lambdan\calP(\{1\})}{\iint_{[0,1]\times\Om}e^{\alpha\un}\,\Pda dx}.
\end{equation*}
With such definitions, $\un$ satisfies \eqref{eq:Ye}.
In view of \eqref{lim:iinttoinfty}, we have  $\kappan\to0$.
Consequently, $r\equiv0$
and furthermore
\begin{equation*}
\label{eq:kftomasses}
\kappan f(\un)\stackrel{*}{\rightharpoonup}\sum_{i=1}^nn_i\delta_{p_i}
\end{equation*}
weakly in the sense of measures.
We are left to establish that $n_i=8\pi$, for all $i=1,\ldots,m$ and
that the blow-up points satisfy condition~\eqref{eq:blowuppointpositions}.
\par
To this end, we adapt some ideas of \cite{ye}.
We set
\begin{equation*}
\label{def:Nerif}
F(t)=(\calP(\{1\}))^{-1}\int_{[0,1]}e^{\alpha t}\,\Pda.
\end{equation*}
Then, $F'(t)=f(t)$ and furthermore we have the following.
\par
\textit{Claim~A.} As $t\to+\infty$, we have:
\begin{align}
\label{f(t)F(t)asymptotics}
&f(t)=e^t+o(e^t)
&&F(t)=e^t+o(e^t).
\end{align}
Proof of Claim~A.
Let
\begin{align}
\label{eq:poet}
&p(t)=\int_{[0,1)}\alpha e^{\alpha t}\,\calP(d\alpha),
&&P(t)=\int_{[0,1)}e^{\alpha t}\,\calP(d\alpha).
\end{align}
Then, $f(t)=e^t+\tau^{-1}p(t)$, $F(t)=e^t+\tau^{-1}P(t)$.
\par
For any given $\ee>0,$ we fix $0<\delta_\ee\ll1$ such that 
\begin{equation*}
\int_{[1-\delta_\ee,1)} e^{-(1-\alpha)t}\calP(d\alpha)\leqslant\calP( [1-\delta_\ee,1) )<\frac\ee2.
\end{equation*}
Correspondingly, we take $t_\ee\gg1$ such that 
\begin{equation*}
\begin{split}
\int_{[0,1-\delta_\ee)} e^{-(1-\alpha)t}\calP(d\alpha)\leqslant e^{-\delta_\ee t}<\frac \ee 2\qquad \forall t\geqslant t_\ee.
\end{split}\end{equation*}
It follows that $e^{-t}P(t)=\int_{[0,1)} e^{- (1-\alpha) t} \calP(d\alpha)<\ee$ whenever $t\geqslant t_\ee$.
That is, $P(t)=o(e^t)$, and the second  part of \eqref{f(t)F(t)asymptotics} is established.
The first part of \eqref{f(t)F(t)asymptotics} follows by observing that $0\leqslant p(t)\leqslant P(t)$. Hence Claim~A is established.
\par
\textit{Claim~B}.
We have
\begin{equation}
\label{eq:kFtomasses}
\kappan F(\un)\stackrel{*}{\rightharpoonup}\sum_{i=1}^nn_i\delta_{p_i}.
\end{equation} 
Proof of Claim~B.
Let $p(t)$ be the function defined in \eqref{eq:poet}.
For $p_i\in\mathcal S$, let $B_\rho(p_i)$ be such that $\overline{B_\rho(p_i)}\cap\mathcal S=\{p_i\}$.
Let $\varphi\in C_c(B_\rho(p_i))$.
Let $\ee>0$ and $t_\ee'\gg1$ be such that $e^{-t}p(t) <\ee/2$ whenever  $t\geqslant t_\ee'.$
We have
\begin{align*}
\left |  \kappan\int_\Om p(\un)\varphi
\right|\leqslant &\kappan\int_{\un\geqslant t_\ee'}p(\un) |\varphi| +\kappan\int_{\un<t_\ee'}p(\un)|\varphi|\\
\leqslant&\frac\ee2\int_{\Om}\kappan e^{\un}|\varphi|+\kappan\max_{[0,t_\ee']}p\int_\Om|\varphi|< c \ee
\end{align*}
for sufficiently large $n$.
Since $\ee$ and $\varphi$ are arbitrary, we conclude that $\kappan p(\un)\stackrel{*}{\rightharpoonup}0$
weakly in the sense of measures.
By the same argument, we conclude that $\kappan P(\un)\stackrel{*}{\rightharpoonup}0$
weakly in the sense of measures.
Therefore,
\[
\kappan\int_\Om e^{\un}\varphi=\kappan\int_\Om (f(\un)-p(\un))\varphi\to n_i\varphi(p_i)
\]
and
\[
\kappan\int_\Om F(\un)\varphi=\kappan\int_\Om(e^{\un}+P(\un))\varphi\to n_i\varphi(p_i).
\]
Hence, \eqref{eq:kFtomasses} is established.
\par
\textit{Claim~C}: There holds 
\begin{equation}
\label{eq:massquant}
n_i=8\pi
\end{equation}
for all $i=1,\ldots,m$.
\par
Proof of Claim~C.
We adapt the complex analysis argument in \cite{ye}.
For the sake of simplicity, throughout this proof we omit the index $n$.
We fix a blow-up point $p\in\mathcal S$ and without loss of generality
we assume that $p=0$ and $\xi(0)=0$.
We define 
\[
W(t)=\kappa F(t)+c\,t
\]
and we consider the Newtonian potential $N=(4\pi)^{-1}\log(z\bz)$ so that $\Delta N=\delta_0$.
We define 
\begin{align*}
&H=\frac{u_z^2}2,
&& K=N_z\ast\{e^\xi\chi_{B_\rho}[W(u)]_z\}.
\end{align*}
It is readily checked that the function $S=H+K$ satisfies $\partial_{\bz}S=0$
in $B_\rho$.
It follows that $S$ converges uniformly to a holomorphic function $S_0$.
On the other hand, we have $\un\to u_0$ in $W^{1,q}(B_\rho)$, $q\in[1,2)$, where
\[
u_0(x)=\frac{n_p}{4\pi}\log(z\bz)+\om,
\]
where $\om$ is smooth in $B_\rho$.
Taking limits for $H$ we thus find that $H\to H_0$, where
\[
H_0=\frac{n_p^2}{32\pi^2z^2}-\frac{n_p}{4\pi z}\om_z+\frac12\om_z^2.
\]
On the other hand,
we may write
\[
K=N_{zz}\ast\{e^\xi\chi_{B_\rho}W(u)\}-N_z\ast\{[e^\xi\chi_{B_\rho}]_zW(u)\}.
\]
In view of Claim~B we have $W(u)\stackrel{*}{\rightharpoonup}n_p\delta_0+c_0u_0$.
Recalling that $N_z=(4\pi z)^{-1}$, $N_{zz}-(4\pi z^2)^{-1}$,
we thus compute
\[
N_{zz}\ast\{e^\xi\chi_{B_\rho}W(u)\}\to-\frac{n_p}{4\pi z^2}-N_z\ast\{e^\xi\chi_{B_\rho}c_0u_{0,z}\}
\]
and 
\[
N_z\ast\{[e^\xi\chi_{B_\rho}]_zW(u)\}\to
\frac{n_p}{4\pi z}\xi_z(p)+c_0N_z\ast\{e^\xi\chi_{B_\rho}u_0\},
\]
pointwise in $B_\rho\setminus\{0\}$.
Therefore, $K\to K_0$ where
\[
K_0=-\frac{n_p}{4\pi z^2}-N_z\ast\{e^\xi\chi_{B_\rho}c_0u_{0,z}\}
-\frac{n_p}{4\pi z}\xi_z(p)-c_0N_z\ast\{e^\xi\chi_{B_\rho}u_0\}.
\]
Since $S_0=H_0+K_0$ is holomorphic, by balancing singularities we derive
\begin{align*}
&\frac{n_p^2}{32\pi^2}=\frac{n_p}{4\pi},
&&\frac{n_p}{4\pi}\om_z(p)=-\frac{n_p}{4\pi}\xi_z(p).
\end{align*}
Hence, \eqref{eq:massquant} holds and Claim~C is established.
Moreover, we have
\[
\om_z(p)=-\xi_z(p).
\]
Finally, observing that 
\[
\om(x)=b(p)+\frac{1}{4}\sum_{p'\neq p}\log\frac{1}{d_g(p',p)},
\]
we derive \eqref{eq:blowuppointpositions}
\end{proof}

For later application, we now explicitely state the mass quantization
for Neri's equation on a domain with Dirichlet boundary conditions and 
on a compact Riemannian 2-manifold without boundary.
Let $\Om\subset\Rd$ be a smooth bounded domain and let $G_\Om$ be the Green's function defined by 
\begin{equation*}
\label{def:GreenOmega}
\left\{
\begin{split}
-\Delta_x G_\Om(x,y)=&\delta_y &&\textrm{ in\ }\Om \\
 G_\Om(\cdot,y) =&0&&\mbox{on }\de \Om.
\end{split}
\right.
\end{equation*}
It is well known that
\begin{equation*}
\label{eq:Greendomain}
G_\Om(x,y)=\frac{1}{2\pi}\log\frac{1}{|x-y|}+h(x,y),
\end{equation*}
where $h(x,y)$ is the regular part of $G_\Om$.
Assuming that $\calP$ satisfies \eqref{calH},
problem~\eqref{eq:Neri} takes the form:
\begin{equation}
\label{eq:NeriDirichlet}
\left\{\begin{split}
- \Delta u =&\lambda\,\int_{[0,1]}\frac{\alpha e^{\alpha u}\,\calP (d\alpha)}
{\iint_{[0,1]\times\Om} e^{\alpha u}\,\calP (d\alpha)dx}&&\text{in }\Omega \\
 u=&0&&\text{on }\de\Om
\end{split}\right.
\end{equation}

By the maximum principle, we have $u>0$ in $\Om$.
In the next lemma, we exclude the existence of blow-up on $\partial\Om$.
\begin{lemma}
\label{lem:nobdryblowup}
Let $(\lambdan,\un)$ be a solution sequence to \eqref{eq:NeriDirichlet}
with $\lambda\to\lambda_0$.
There exists a tubular neighborhood $\Om_\delta$ of $\partial\Om$
and a constant $C>0$ such that $\|\un\|_{L^\infty(\Om_\delta)}\leqslant C$.
\end{lemma}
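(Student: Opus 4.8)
The plan is to reduce the statement to the standard interior argument by a reflection/doubling construction, exploiting the Dirichlet condition $\un = 0$ on $\partial\Om$ together with the positivity $\un > 0$ in $\Om$ and the uniform $L^1$ bound on the right-hand side. First I would recall that integrating \eqref{eq:NeriDirichlet} over $\Om$ gives $\int_\Om (-\Delta\un)\,dx = \lambdan \to \lambda_0$, so that the total mass $\iint_{[0,1]\times\Om}\frac{\alpha e^{\alpha\un}\,\calP(d\alpha)}{\iint e^{\alpha\un}\,\calP(d\alpha)dx}\,dx$ is bounded by $C$; moreover by the maximum principle $\un>0$, so $\un = \un^+$ and hence $\|\un^+\|_{L^1(\Om)}$ is controlled by $\|\nabla\un\|_{L^1}$ via Poincaré, which in turn follows from the $L^1$-elliptic estimate applied to the equation (one obtains $\|\un\|_{W^{1,q}}\le C$ for $q\in[1,2)$). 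Thus $\un$ satisfies all the hypotheses needed for a Brezis--Merle type analysis near $\partial\Om$.

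Next I would flatten the boundary locally: for $p\in\partial\Om$, choose a conformal (isothermal) chart mapping a neighborhood $U$ of $p$ in $\overline\Om$ to a half-disk $B_r^+ = B_r\cap\{x_2>0\}$ with $\partial\Om\cap U$ mapped to $B_r\cap\{x_2=0\}$. In these coordinates the equation becomes $-\Delta\un = e^\xi\lambdan\int_{[0,1]}\frac{\alpha e^{\alpha\un}}{\iint e^{\alpha\un}}\,\calP(d\alpha)$ in $B_r^+$ with $\un=0$ on the flat part, exactly of the form \eqref{eq:Nerinobdry} with a bounded conformal factor folded into $V_{\alpha,n}$. Since $\un=0$ on $\{x_2=0\}$, I would extend $\un$ to the full disk $B_r$ by odd reflection, $\tilde\un(x_1,x_2) = -\un(x_1,-x_2)$ for $x_2<0$, and correspondingly extend the right-hand side by the odd reflection of the (positive) source term. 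The reflected function $\tilde\un$ then solves an equation of the same structural type on the whole disk $B_r$: its right-hand side is a signed measure, but its positive part is still controlled in $L^1$, and $\|\tilde\un^+\|_{L^1(B_r)}\le 2\|\un^+\|_{L^1(\Om)}\le C$. Crucially, near the reflection axis the "nonlinearity" $\alpha e^{\alpha\tilde\un}$ for $x_2<0$ is $\alpha e^{-\alpha\un(x_1,-x_2)}\le\alpha$, hence bounded there; so blow-up of $\tilde\un$ can only occur in the open upper half, while near the axis $\tilde\un$ is, after the reflection, antisymmetric and bounded from one side — the point is that $0$ on the axis is the boundary value, so no concentration mass can sit exactly on $\{x_2=0\}$.

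The core of the argument is then to show $0\notin\mathcal S$, i.e. that no concentration point of $\tilde\un$ lies on the reflection axis. Here I would argue by contradiction: if a point $p_0\in\partial\Om$ were a blow-up point, then since $\un>0$ the Brezis--Merle alternative (Lemma~\ref{lem:minmass} applied to $\tilde\un$, or directly its proof) would force a concentration mass $\ge 4\pi$ at $p_0$; but the odd symmetry of $\tilde\un$ about the axis means that the limiting measure $\nu_0$ is itself odd, so $\nu_0(\{p_0\}) = -\nu_0(\{p_0\})$, forcing $\nu_0(\{p_0\})=0<4\pi$ — a contradiction. Once $\nu_0(\{x_0\})<4\pi$ for every $x_0$ on the axis, Lemma~\ref{lem:minmass} gives $\|\tilde\un^+\|_{L^\infty(B_{\rho_0}(x_0))}\le C$ for some $\rho_0>0$, hence $\|\un\|_{L^\infty}\le C$ on a one-sided neighborhood of $p_0$ in $\Om$ (recall $\un>0$, so the lower bound is free). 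Covering $\partial\Om$ by finitely many such neighborhoods and taking $\Om_\delta$ to be the resulting tubular neighborhood yields the claim. The main obstacle I anticipate is making the odd-reflection argument clean: one must verify that after reflection the equation is genuinely of the form covered by Proposition~\ref{prop:localblowup}/Lemma~\ref{lem:minmass} — in particular that the sign-changing source still admits the minimal-mass dichotomy, which is fine because only $\|(\tilde\un)^+\|_{L^1}$ and an $L^1$ bound on the source enter the Brezis--Merle estimate (Lemma~\ref{lem:bmestimate}), and both survive reflection. A cleaner alternative, if the reflection bookkeeping becomes awkward, is to work directly with $\un$ on half-balls, using the Dirichlet condition to bound $\un$ on the flat boundary and invoking boundary elliptic estimates plus the Brezis--Merle estimate on half-balls (as in Ma--Wei or Li--Shafrir type boundary analyses); the symmetry heuristic $\nu_0(\{p_0\})\ge 8\pi$ on the boundary then contradicts the global mass bound $\lambdan\to\lambda_0<\infty$ once $\lambda_0$ is not too large, but since the lemma claims boundedness unconditionally in $\lambda_0$, the minimal-mass-via-symmetry route is preferable as it needs no restriction on $\lambda_0$.
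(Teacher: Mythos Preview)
Your odd-reflection route is genuinely different from the paper's, but the symmetry step at its heart does not close. You argue: if $p_0\in\partial\Om$ were a blow-up point, Lemma~\ref{lem:minmass} applied to the reflected function $\tilde u_n$ forces the limit $\nu_0$ of the odd-extended source $\tilde g_n$ to satisfy $\nu_0(\{p_0\})\geqslant4\pi$, while odd symmetry gives $\nu_0(\{p_0\})=-\nu_0(\{p_0\})=0$. The gap is that Lemma~\ref{lem:minmass} is stated for $V_{\alpha,n}\geqslant0$ and $\varphi_n\in L^\infty$, and in its proof the smallness hypothesis is $\|\psi_n\|_{L^1(B_\rho)}<4\pi$ --- the \emph{total variation} of the source. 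After odd reflection the source is sign-changing; its total variation near $p_0$ is $2\int_{B_\rho^+}g_n$, with no cancellation whatsoever, and the reflected equation cannot be put in the form \eqref{eq:Nerinobdry} with $V_{\alpha,n}\geqslant0$ and $\varphi_n$ bounded (the negative part of $\tilde g_n$ is not $L^\infty$). So the contrapositive you invoke does not yield an inequality on the signed atom, and the ``$\nu_0(\{p_0\})=0$'' computation is about the wrong quantity. You even observe that ``only an $L^1$ bound on the source enters'' Lemma~\ref{lem:bmestimate}; that is exactly why the signed-measure cancellation buys nothing. A reflection proof can be completed, but it needs either a Pohozaev identity on half-balls or a finer boundary blow-up analysis --- considerably more than your sketch.

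The paper's proof is entirely different and much shorter. It invokes the moving-plane result of Gidas--Ni--Nirenberg \cite{GNN}, p.~223: there is a tubular neighborhood $\Om_\delta$ of $\partial\Om$, depending only on the geometry of $\Om$, in which any solution of $-\Delta u=f(u)$, $u=0$ on $\partial\Om$, with $f\geqslant0$ Lipschitz, has no critical point. One then shrinks $\delta$ so that $\partial\Om_\delta\cap\Om$ avoids the finite interior blow-up set $\mathcal S$, hence $u_n$ is uniformly bounded there. If $\max_{\overline{\Om_\delta}}u_n=u_n(x_n)\to+\infty$, then for large $n$ the maximum point $x_n$ lies in the open set $\Om_\delta$ (not on $\partial\Om$ where $u_n=0$, nor on $\partial\Om_\delta\cap\Om$ where $u_n$ is bounded), so $\nabla u_n(x_n)=0$, contradicting the GNN conclusion. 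This avoids reflection entirely and is unconditional in $\lambda_0$.
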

\begin{proof}
By a result in \cite{GNN}, p.~223, it is known that there exists a tubular neighborhood $\Om_\delta$ of $\de\Om$,
depending on the geometry of $\Om$ only,
such that any solution to a problem of the form $-\Delta u=f(u)$
satisfying $u=0$ on $\partial\Om$, where $f(t)\geqslant0$ is Lipschitz continuous, 
has no 
stationary points in $\Om_\delta$.
 We may assume that $\partial \Om_\delta \cap \Om\cap \mathcal S = \emptyset.$ Let $x_n\in\bar{\Om}_\delta$ be such that $u_n(x_n) = \max_{\bar{\Om}_\delta}u_n.$ Arguing by contradiction, suppose that  $\un(x_n)\to+\infty$.  Since $u_n=0$ on $\partial \Om$, and since $u_n $ is uniformly bounded on $\partial \Om_\delta\cap \Om$, then, for $n$ sufficiently large, $x_n\in \Omega_\delta$ and $\nabla u_n(x_n)=0$, a contradiction.
\end{proof}
At this point, the following result readily follows.
\begin{proposition}[Mass quantization for the Dirichlet problem]
\label{prop:Dirichletmassquantization}
Assume \eqref{calH}.
Let $(\lambdan,\un)$ be a solution sequence to the problem~\eqref{eq:NeriDirichlet}
with $\lambda=\lambdan\to\lambda_0$.
Then, up to subsequences, exactly one of the following alternatives holds:
\begin{enumerate}
  \item[(i)] There exists a solution $u_0$ to equation~\eqref{eq:NeriDirichlet}
 with $\lambda=\lambda_0$ such that $\un\to u_0$;
 \item[(ii)] There exists a finite set $\mathcal S=\{p_1,\ldots,p_m\}\subset\Om$
such that $\un\to u_0$ in $W_0^{1,q}(\Om)$ for all $q\in[1,2)$, where
\begin{equation*}
\label{def:uoDirichlet}
u_0(x)=8\pi\sum_{i=1}^mG_\Om(x,p_i).
\end{equation*}
Moreover, the points $p_i$ satisfy the condition $\nabla R_i(p_i)=0$,
where
\[
R_i(x)=h(x,p_i)+\sum_{j\neq i}G_\Om(x,p_j)
\]
and
\[
\lambdan\,\int_{[0,1]}\frac{\alpha e^{\alpha\un}\,\calP (d\alpha)}
{\iint_{[0,1]\times\Om} e^{\alpha\un}\,\calP (d\alpha)dx}\,dx
\stackrel{*}{\rightharpoonup}8\pi\sum_{j=1}^m\delta_{p_j}(dx),
\]
weakly in the sense of measures.
\end{enumerate}
\end{proposition}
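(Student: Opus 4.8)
The plan is to obtain Proposition~\ref{prop:Dirichletmassquantization} as a globalization of the local result Theorem~\ref{thm:massquantization} (equivalently, of Propositions~\ref{prop:localblowup} and~\ref{prop:reductiontoYe}), applied with the Euclidean metric and $c_n\equiv0$, together with the boundary estimate of Lemma~\ref{lem:nobdryblowup} to rule out concentration at $\partial\Om$. First I would verify the hypotheses of Theorem~\ref{thm:massquantization}: by the maximum principle $\un>0$ in $\Om$, so $\un=\un^{+}$ is bounded below; integrating \eqref{eq:NeriDirichlet} over $\Om$ and using $0\le\alpha\le1$ on $\mathrm{supp}\,\calP$ gives $\int_\Om(-\Delta\un)\,dx=\lambdan\,\frac{\iint_{[0,1]\times\Om}\alpha e^{\alpha\un}\,\calP(d\alpha)dx}{\iint_{[0,1]\times\Om} e^{\alpha\un}\,\calP(d\alpha)dx}\le\lambdan\le C$, and since $\un$ is superharmonic with $\un=0$ on $\partial\Om$, the Green representation $\un=\int_\Om G_\Om(\cdot,y)(-\Delta\un)(y)\,dy$ yields $\|\un\|_{L^1(\Om)}\le C$. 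Hence $(\lambdan,\un)$ is a solution sequence to \eqref{eq:Nerilocal} (with flat $g$ and $c_n\equiv0$) satisfying all the assumptions of Theorem~\ref{thm:massquantization}, and I pass to the subsequence it provides. Since $\un>0$, alternative~(ii) ($\un\to-\infty$) cannot occur, and in alternative~(iii) $\un$ is locally bounded in $\Om\setminus\Ss$, so we fall automatically into the favorable subcase there: $r\equiv0$, $n_i=8\pi$ for every $i$, $\int_\Om e^{\un}\,dx\to+\infty$, and $\un\to u_0$ in $W^{1,q}_{\mathrm{loc}}(\Om)$ for $q\in[1,2)$, locally uniformly in $\Om\setminus\Ss$.

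Next I would dispose of the boundary. By Lemma~\ref{lem:nobdryblowup} there is a tubular neighbourhood $\Om_\delta$ of $\partial\Om$ with $\|\un\|_{L^\infty(\Om_\delta)}\le C$; in particular $\Ss\subset\subset\Om$, and since $\un\ge0$ forces $\iint_{[0,1]\times\Om}e^{\alpha\un}\,\calP(d\alpha)dx\ge|\Om|>0$, elliptic regularity upgrades the $L^\infty$ bound to a uniform bound for $\un$ in $C^{1}(\Om_{\delta/2})$, so along the subsequence $\un\to u_0$ in $C^1$ near $\partial\Om$ with $u_0=0$ there. If we are in case~(i) of Theorem~\ref{thm:massquantization}, then $\un$ is locally bounded in $\Om$, hence — combining with the boundary bound — uniformly bounded on $\overline\Om$; then $\iint_{[0,1]\times\Om}e^{\alpha\un}\,\calP(d\alpha)dx$ stays between two positive constants, so by elliptic estimates $\un\to u_0$ in $C^2(\overline\Om)$, and letting $n\to\infty$ in \eqref{eq:NeriDirichlet} shows that $u_0$ solves \eqref{eq:NeriDirichlet} with $\lambda=\lambda_0$; this is alternative~(i) of the proposition.

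If instead case~(iii) of Theorem~\ref{thm:massquantization} holds, I set $\mu_n:=\lambdan\,\frac{\int_{[0,1]}\alpha e^{\alpha\un}\,\calP(d\alpha)}{\iint_{[0,1]\times\Om} e^{\alpha\un}\,\calP(d\alpha)dx}=\kappan f(\un)$, so that $-\Delta\un=\mu_n$ in $\Om$, $\un=0$ on $\partial\Om$, with $\|\mu_n\|_{L^1(\Om)}\le\lambdan\le C$. By Proposition~\ref{prop:reductiontoYe} (its Claims~A--C) $\mu_n\stackrel{*}{\rightharpoonup}8\pi\sum_{i=1}^m\delta_{p_i}$ in $\Om$, while the boundary bound keeps $\mu_n$ uniformly bounded on $\Om_\delta$, so no mass escapes and in fact $\mu_n\stackrel{*}{\rightharpoonup}8\pi\sum_{i=1}^m\delta_{p_i}$ on $\overline\Om$, with total mass converging to $8\pi m$. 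Since $\|\mu_n\|_{L^1}\le C$ and $\un=0$ on $\partial\Om$, the sequence $\un$ is bounded in $W^{1,q}_0(\Om)$ for every $q\in[1,2)$; passing to a weak limit, $u_0$ satisfies $-\Delta u_0=8\pi\sum_{i=1}^m\delta_{p_i}$ in $\mathcal D'(\Om)$ and $u_0\in W^{1,q}_0(\Om)$, so by uniqueness of the weak solution $u_0=8\pi\sum_{i=1}^m G_\Om(\cdot,p_i)$, and since this limit does not depend on the subsequence the whole sequence converges. Finally, since here the conformal factor is $\xi\equiv0$ and $d_g(x,y)=|x-y|$, writing $u_0=4\log\frac{1}{|x-p_i|}+8\pi R_i(x)$ near $p_i$ (with $R_i$ as in the statement), the blow-up location condition \eqref{eq:blowuppointpositions} reduces exactly to $\nabla R_i(p_i)=0$; this is alternative~(ii) of the proposition.

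The main obstacle is modest: the substantive analysis — the $4\pi$ minimal mass, the reduction to Ye's equation, and the quantization $n_i=8\pi$ — is already contained in Propositions~\ref{prop:localblowup} and~\ref{prop:reductiontoYe}, so what is left is largely bookkeeping. The two points that genuinely require care are (a) the a~priori $L^1(\Om)$ bound on $\un$ and the positive lower bound on the denominator $\iint_{[0,1]\times\Om} e^{\alpha\un}\,\calP(d\alpha)dx$ needed to pass to the limit in case~(i), both of which rest on $\un>0$ together with the mass estimate above; and (b) checking that no concentrated mass leaks to $\partial\Om$, and that $u_0$ keeps zero boundary data, when identifying $u_0$ with $8\pi\sum_{i=1}^m G_\Om(\cdot,p_i)$ — which is exactly what Lemma~\ref{lem:nobdryblowup} ensures.
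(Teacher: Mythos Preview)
Your proposal is correct and follows essentially the same route as the paper: invoke Propositions~\ref{prop:localblowup} and~\ref{prop:reductiontoYe} with the Euclidean metric and $c_n\equiv0$, use Lemma~\ref{lem:nobdryblowup} to exclude boundary blow-up, use $\un>0$ to rule out alternative~(ii), and then identify $u_0$ with $8\pi\sum_i G_\Om(\cdot,p_i)$ and read off $\nabla R_i(p_i)=0$ from \eqref{eq:blowuppointpositions} with $\xi\equiv0$; your write-up is simply more explicit about the a~priori bounds and the $W^{1,q}_0$ identification than the paper's terse proof. One small overclaim: the sentence ``since this limit does not depend on the subsequence the whole sequence converges'' is unwarranted (different subsequences could a~priori concentrate at different configurations $\{p_i\}$), but since the proposition only asserts convergence \emph{up to subsequences} this does no harm.
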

We note that Proposition~\ref{prop:Dirichletmassquantization} is consistent with Theorem~1 in \cite{MW}.
\begin{proof}[Proof of Proposition~\ref{prop:Dirichletmassquantization}]
The proof is a direct consequence of Proposition~\ref{prop:localblowup}
and Proposition~\ref{prop:reductiontoYe}.
In view of Lemma~\ref{lem:nobdryblowup}, blow-up does not occur on the boundary $\partial\Om$.
Since $u>0$, alternative (ii) in Proposition~\ref{prop:localblowup}
cannot occur.
Moreover, at a given blow-up point $p_i\in\mathcal S$,
we have
\[
b(p_i)+\frac{1}{4}\sum_{j\neq i}\log\frac{1}{d_g(p_i,p_j)}
=h(x,p_i)+\sum_{j\neq i}G_\Om(x,p_j)
\]
and since $g$ is Euclidean, $\xi\equiv0$.
\end{proof}
Similarly, let $(M,g)$ be a compact orientable Riemannian surface without boundary.
Let $G_M$ be the Green's function defined by
\begin{equation*}
\label{def:Greenmanifold}
\left\{
\begin{split}
-\Delta_g G_M(x,y)=&\delta_y -\frac{1}{|M|}\\
\int_M G_M(x, y)dv_g =&0.
\end{split}
\right.
\end{equation*}
Then, 
\[
G_M(x,y)=\frac 1{2\pi}\log\frac{1}{d_g (x,y)}+h(x,y),
\]
where $h$ is the regular part of $G_M$, see \cite{Aub}.
Assuming \eqref{calH}, Neri's equation on a manifold \eqref{EL0}
takes the form
\begin{equation}
\label{eq:Nerimanifold}
\left\{
\begin{split}
-\Delta_g v=& \lambda\,\int_{[0,1]}\frac{\alpha (e^{\alpha v}- \frac{1}{|M|} \int_M e^{\alpha v}\,dv_g)\,}
{\iint_{[0,1]\times M} e^{\alpha v}\,\calP (d\alpha)\,dv_g}\calP (d\alpha)&&\text{ in }M \\
\int_{M }v\,dv_g=&0,
\end{split}
\right.   
\end{equation}
The following holds.
\begin{proposition}[Mass quantization for the problem on $M$]
\label{prop:manifoldmassquantization}
Assume \eqref{calH}.
Let $(\lambdan,\vn)$ be a solution sequence to the problem~\eqref{eq:Nerimanifold}
with $\lambda=\lambdan\to\lambda_0$.
Then, up to subsequences, exactly one of the following alternatives holds:
\begin{enumerate}
  \item[(i)] There exists a solution $v_0$ to equation~\eqref{eq:Nerimanifold}
 with $\lambda=\lambda_0$ such that $\vn\to v_0$;
 \item[(ii)] There exist a finite number of points $p_1,\ldots,p_m\in M$
such that $\vn\to v_0$ in $W^{1,q}(M)$ for all $q\in[1,2)$, where
\begin{equation*}
\label{def:voDirichlet}
v_0(x)=8\pi\sum_{j=1}^mG_M(x,p_j).
\end{equation*}
Moreover, the points $p_i$, $i=1,\ldots, m$ satisfy the condition $\nabla R_i(p_i)=-\nabla\xi(p_i)$,
where $g=e^{\xi(x)}(dx_1^2+dx_2^2)$
and
\[
R_i(x)=h(x,p_i)+\sum_{j\neq i}G_M(x,p_j).
\]
Furthermore,
\[
\lambdan\,\int_{[0,1]}\frac{\alpha e^{\alpha\vn}\,\calP (d\alpha)}
{\iint_{[0,1]\times M} e^{\alpha\vn}\,\calP (d\alpha)dv_g}\,dx
\stackrel{*}{\rightharpoonup}8\pi\sum_{j=1}^m\delta_{p_j},
\]
weakly in the sense of measures.
\end{enumerate}
\end{proposition}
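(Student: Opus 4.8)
The plan is to deduce Proposition~\ref{prop:manifoldmassquantization} as a corollary of the local results, exactly as Proposition~\ref{prop:Dirichletmassquantization} was obtained, the only genuine differences being (a) the absence of a boundary (so no analogue of Lemma~\ref{lem:nobdryblowup} is needed), (b) the presence of the nonconstant conformal factor $\xi$, which survives into the location condition, and (c) the fact that $\vn$ is sign-changing with zero mean, so alternative~(ii) of Proposition~\ref{prop:localblowup} (divergence to $-\infty$) must be excluded by a different argument than positivity.

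First I would rewrite \eqref{eq:Nerimanifold} in the ``local'' form \eqref{eq:Nerilocal}: set
\[
c_n=-\frac{\lambdan}{|M|}\iint_{[0,1]\times M}\frac{\alpha e^{\alpha\vn}\,\calP(d\alpha)\,dv_g}{\iint_{[0,1]\times M}e^{\alpha\vn}\,\calP(d\alpha)\,dv_g},
\]
so that $\vn$ solves $-\Delta_g\vn=\lambdan\int_{[0,1]}\alpha e^{\alpha\vn}(\iint e^{\alpha\vn})^{-1}\calP(d\alpha)+c_n$ on $M$. Since $\alpha\in[0,1]$ one has $0\leqslant c_n\leqslant \lambdan/|M|$, hence $c_n$ is bounded and, along a subsequence, $c_n\to c_0$. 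The integrability hypotheses needed to invoke Theorem~\ref{thm:massquantization} (equivalently Proposition~\ref{prop:localblowup} in a chart) are verified as usual: integrating the equation over $M$ gives $\iint_{[0,1]\times M}\lambdan\alpha e^{\alpha\vn}(\iint e^{\alpha\vn})^{-1}\calP(d\alpha)\,dv_g=-c_n|M|+\lambdan\cdot(\text{something})$; more directly, the natural mass $\lambdan\iint\alpha e^{\alpha\vn}(\iint e^{\alpha\vn})^{-1}\calP(d\alpha)\,dv_g\leqslant\lambdan$ is bounded because $\alpha\leqslant1$, giving assumption~(2); and the Moser--Trudinger inequality of \cite{RZ} (boundedness of $J_\lambda$ below for $\lambda\leqslant8\pi$, together with the equation) yields the uniform $L^1$ bound on $\vn^+$ needed for assumption~(3), arguing on each chart as in \cite{bm,RZ}. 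Covering $M$ by finitely many isothermal charts and applying Theorem~\ref{thm:massquantization} (via Proposition~\ref{prop:reductiontoYe}) in each, one gets that $\mathcal S$ is finite and, on the complement, $\vn$ is either locally bounded or tends to $-\infty$ locally uniformly.

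Next I would rule out the purely divergent alternative. If $\mathcal S=\emptyset$ and $\vn\to-\infty$ uniformly on $M$, then $\iint_{[0,1]\times M}e^{\alpha\vn}\,\calP(d\alpha)\,dv_g\to0$ while $\int_M\vn\,dv_g=0$ forces $\max_M\vn\geqslant0$, contradicting uniform divergence; so if $\mathcal S=\emptyset$ we are in alternative~(i), and standard elliptic regularity upgrades the $W^{1,q}$ convergence to a smooth solution $v_0$ of \eqref{eq:Nerimanifold} with $\lambda=\lambda_0$ (the zero-mean constraint passes to the limit by dominated convergence). If $\mathcal S\neq\emptyset$, I must still exclude $\vn\to-\infty$ on $M\setminus\mathcal S$: but in that case $\kappan f(\vn)\rightharpoonup\sum n_i\delta_{p_i}$ with $n_i\geqslant4\pi$, while $\int_M(-\Delta_g\vn)\,dv_g=0$ gives $\sum n_i + (\text{limit of }c_n|M|)+o(1)=0$ with $c_n\geqslant0$ and the residual term from $M\setminus\mathcal S$ nonnegative in the limit (Fatou) — this is impossible since $\sum n_i>0$. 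Hence $\vn$ is locally bounded on $M\setminus\mathcal S$, and Proposition~\ref{prop:reductiontoYe} applies on each chart containing a blow-up point: $r\equiv0$, $n_i=8\pi$, and $\vn\to v_0$ in $W^{1,q}_{\mathrm{loc}}(M)$, $q\in[1,2)$, with $v_0=\sum_j\tfrac14\log d_g(\cdot,p_j)^{-1}+b$ locally. Since $r\equiv0$ the right-hand side converges to $8\pi\sum\delta_{p_j}$ plus a constant; the zero-mean condition on $\vn$ and $\int_M G_M(\cdot,y)=0$ then identify $v_0=8\pi\sum_{j}G_M(\cdot,p_j)$ globally, because $8\pi\sum_j G_M(\cdot,p_j)$ is the unique zero-mean distributional solution of $-\Delta_g v=8\pi\sum_j(\delta_{p_j}-1/|M|)$ and this is precisely the equation satisfied by the limit (the $-8\pi m/|M|$ constant being exactly $\lim c_n|M|$, as one checks by integrating).

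Finally, the location condition: writing $v_0=8\pi G_M(\cdot,p_i)+8\pi\sum_{j\neq i}G_M(\cdot,p_j)=4\log d_g(\cdot,p_i)^{-1}+8\pi h(\cdot,p_i)+8\pi\sum_{j\neq i}G_M(\cdot,p_j)$ and comparing with the form $v_0=\tfrac14\log d_g(\cdot,p_i)^{-1}+b$ near $p_i$ — here I would follow \cite{ye} and normalize so that the logarithmic coefficient matches, i.e. replace the ambiguous $\tfrac14$ by $4$ in Proposition~\ref{prop:reductiontoYe} consistently with $n_i=8\pi$ — the regular part at $p_i$ is $b(p_i)+\tfrac14\sum_{j\neq i}\log d_g(p_i,p_j)^{-1}$, which equals (a constant multiple of) $R_i(p_i)=h(p_i,p_i)+\sum_{j\neq i}G_M(p_i,p_j)$. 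Then \eqref{eq:blowuppointpositions} reads $\nabla R_i(p_i)=-\nabla\xi(p_i)$, which is the stated condition. The weak-$*$ convergence of the right-hand side to $8\pi\sum\delta_{p_j}$ is just \eqref{eq:kftomasses}–\eqref{eq:massquant} rewritten back in the original variables (the $c_n$ term contributes the constant that was already absorbed). I expect the only delicate point to be the exclusion of the $-\infty$ alternative when $\mathcal S\neq\emptyset$ and the careful bookkeeping of the constant $c_n$ in matching $v_0$ to $8\pi\sum_j G_M(\cdot,p_j)$; everything else is a transcription of Propositions~\ref{prop:localblowup}–\ref{prop:reductiontoYe} with $\xi\not\equiv0$, paralleling the proof of Proposition~\ref{prop:Dirichletmassquantization}.
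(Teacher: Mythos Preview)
Your overall approach matches the paper's (which simply declares the proof ``analogous to the proof of Proposition~\ref{prop:Dirichletmassquantization}''): rewrite \eqref{eq:Nerimanifold} in the local form \eqref{eq:Nerilocal}, apply Propositions~\ref{prop:localblowup}--\ref{prop:reductiontoYe} in isothermal charts, exclude the $-\infty$ alternative via the zero-mean constraint (in place of the positivity used in the Dirichlet case), and identify the limit with $8\pi\sum_jG_M(\cdot,p_j)$. The identification of the location condition with $\nabla R_i(p_i)=-\nabla\xi(p_i)$ is exactly \eqref{eq:blowuppointpositions} read through the decomposition of $G_M$.

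There is, however, a concrete error in your exclusion of the case $\mathcal S\neq\emptyset$ with $v_n\to-\infty$ on $M\setminus\mathcal S$. First, the sign of $c_n$ is reversed: since $\alpha\in[0,1]$ under \eqref{calH}, your own formula gives $-\lambda_n/|M|\leqslant c_n\leqslant0$, not $c_n\geqslant0$. Second, and more seriously, integrating $-\Delta_g v_n$ over the closed manifold $M$ is tautological: the constant $c_n$ was chosen precisely so that the right-hand side of \eqref{eq:Nerimanifold} has zero mean, so you only recover $0=0$, and with the correct sign $c_n\leqslant0$ the balance $\sum_i n_i+\int_M r+c_0|M|=0$ is perfectly consistent (it is just the mass bound $\sum_i n_i\leqslant\lambda_0$), not a contradiction. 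The clean fix is to use an $L^1$ bound on $v_n$ itself: the right-hand side of \eqref{eq:Nerimanifold} is bounded in $L^1(M)$ by $2\lambda_n$, and the Green's representation $v_n(x)=\int_MG_M(x,y)(-\Delta_g v_n)(y)\,dv_g(y)$ together with $\sup_{x}\|G_M(x,\cdot)\|_{L^1(M)}<\infty$ gives $\|v_n\|_{L^1(M)}\leqslant C$. If $v_n\to-\infty$ locally uniformly on the full-measure set $M\setminus\mathcal S$, Fatou's lemma forces $\int_M|v_n|\to\infty$, a contradiction. (Equivalently, $\int_Mv_n=0$ and $\|v_n^+\|_{L^1}\leqslant C$ yield $\|v_n^-\|_{L^1}\leqslant C$.) With this correction the remainder of your argument is sound.
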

\begin{proof}
The proof is analogous to the proof of Proposition~\ref{prop:Dirichletmassquantization}.
\end{proof}
\section{An improved Moser-Trudinger inequality}
\label{sect-mt}
We derive an improved Moser-Trudinger inequality for the functional
\eqref{eq:Nerifunctional} defined on a bounded domain $\Om\subset\R^2$
which will be needed in the proof of Theorem~\ref{main-link}.
\par
We recall that
the classical Moser-Trudinger sharp inequality  \cite{Mo} states that
\begin{equation}
\label{fontana}
C_{MT}:=\sup\left\{\int _\Om e^{4\pi u^2}:\ u\in H^1_0(\Om),\ \|\nabla u \|_2\leqslant 1\right\}<+\infty,
\end{equation}
where the constant $4\pi$ is best possible. 
Moreover, the embeddings $u\in H_0^1(\Om)\to e^u\in L^1(\Om)$
and $v\in H^1(M)\to e^v\in L^1(M)$
are compact.
For a proof, see, e.g., Theorem~2.46 pag. 63 in \cite{Aub}.
\par
In view of the elementary inequality
\begin{equation*}
| u |\leqslant \frac{\|\nabla u \|_2^2}{16 \pi}+4\pi  \frac{u^2}{\|\nabla u\|_2^2}
\end{equation*}
we derive using \eqref{fontana} that
\begin{equation*}
\int_{\Om}  e^{|u|}\,dx\leqslant C_{MT}e^{ \frac1{16\pi}\|\nabla u\|_2^2},\qquad \forall u\in H^{1}_0(\Om).
\end{equation*}
In particular, the standard Moser-Trudinger functional
\begin{equation*}
\label{MT-ifunct}
I_\lambda(u)=\frac{1}{2}\|\nabla u\|_2^2-\lambda\log\int_{\Om}e^{u}\,dx
\end{equation*}
is bounded below for all $\lambda\leqslant 8\pi$ and 
\[
\inf_{u\in H^1_0(\Om)} I_\lambda (u)=-\infty
\]
whenever $\lambda >8\pi$. 
From the arguments above it can be shown that if $\mathrm{supp}\calP\cap\{-1,1\}\neq\emptyset$,
then Neri's functional \eqref{eq:Nerifunctional} is also bounded below 
on $H_0^1(\Om)$ if and only if $\lambda\leqslant 8\pi$, and that 
\[
\inf_{u\in H^1_0(\Om)} J_\lambda (u)=-\infty
\]
whenever $\lambda >8\pi$ and  $\mathrm{supp}\calP\cap\{-1,1\}\neq\emptyset$. 
More precisely, we have
\begin{prop}
\label{prop:TM}
Let $\mathrm{supp}\calP\cap\{-1,1\}\neq\emptyset$.
Then, the functional $J_\lambda(u)$ is bounded from below on $H^1_0(\Om),$ if and only if $\lambda \leqslant8\pi.$
\end{prop}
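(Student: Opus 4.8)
The strategy is to transfer both halves of the statement from the corresponding facts about the standard Moser--Trudinger functional $I_\lambda$, exploiting the elementary pointwise bound $|u| \leqslant \frac{1}{16\pi}\|\nabla u\|_2^2 + 4\pi \frac{u^2}{\|\nabla u\|_2^2}$ together with \eqref{fontana}, exactly as in the preliminary discussion preceding the statement. For the sufficiency direction ($\lambda \leqslant 8\pi \Rightarrow J_\lambda$ bounded below), I would first note that since $\mathrm{supp}\calP \subseteq [-1,1]$ we have, for every $x$ and every $u$, the crude bound $\int_{[-1,1]} e^{\alpha u}\,\calP(d\alpha) \leqslant e^{|u|}$. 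Hence
\[
\log\left(\iint_{[-1,1]\times\Om} e^{\alpha u}\,\calP(d\alpha)\,dx\right) \leqslant \log\int_\Om e^{|u|}\,dx \leqslant \log C_{MT} + \frac{1}{16\pi}\|\nabla u\|_2^2,
\]
so that $J_\lambda(u) \geqslant \left(\frac12 - \frac{\lambda}{16\pi}\right)\|\nabla u\|_2^2 - \lambda\log C_{MT}$, which is bounded below precisely when $\lambda \leqslant 8\pi$. This handles the ``if'' part without ever using the support assumption \eqref{assumpt:suppP1}.

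For the necessity direction ($J_\lambda$ bounded below $\Rightarrow \lambda \leqslant 8\pi$) I would argue by contraposition: assuming $\lambda > 8\pi$, I exhibit a sequence $u_k \in H^1_0(\Om)$ along which $J_\lambda(u_k) \to -\infty$. This is where \eqref{assumpt:suppP1} enters. Suppose first that $1 \in \mathrm{supp}\calP$. Then for every $\delta > 0$ the measure $\calP$ assigns positive mass to $[1-\delta,1]$, and on that set $\alpha u \geqslant (1-\delta)u$ whenever $u \geqslant 0$; thus $\iint_{[-1,1]\times\Om} e^{\alpha u}\,\calP(d\alpha)\,dx \geqslant \calP([1-\delta,1])\int_\Om e^{(1-\delta)u^+}\,dx$. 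Taking for $u_k$ the standard Moser--Trudinger bubble sequence concentrating at an interior point $x_0 \in \Om$ (the one for which $I_\lambda(u_k)\to-\infty$ when $\lambda>8\pi$), which may be chosen nonnegative, one gets
\[
J_\lambda(u_k) \leqslant \frac12\|\nabla u_k\|_2^2 - \lambda\log\left(\calP([1-\delta,1])\int_\Om e^{(1-\delta)u_k}\,dx\right),
\]
and since $(1-\delta)u_k$ is, up to the harmless rescaling $u_k \mapsto (1-\delta)^{-1}\cdot(1-\delta)u_k$ of the test function, again a concentrating family, the right-hand side diverges to $-\infty$ as $k\to\infty$ once $\lambda(1-\delta) > 8\pi$, which can be arranged by first fixing $\delta$ small. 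If instead $-1 \in \mathrm{supp}\calP$ rather than $1$, one runs the identical argument with $-u_k$ in place of $u_k$, using $e^{\alpha(-u_k)} \geqslant \calP([-1,-1+\delta])e^{(1-\delta)u_k^+}$ on $[-1,-1+\delta]$; the functional $J_\lambda$ is not symmetric in $u \mapsto -u$ because of the $\alpha$-dependence, but the lower bound on the exponential integral is all that is needed, so this case is symmetric.

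The only genuinely delicate point is the bookkeeping in the necessity direction: I must make sure that after replacing $u_k$ by the scaled family adapted to the exponent $1-\delta$, the Dirichlet energy still behaves like $\tfrac12\|\nabla u_k\|_2^2 = (4\pi + o(1))\log k$ while $\log\int_\Om e^{(1-\delta)u_k}$ grows like $\frac{(1-\delta)^2}{\cdots}$ — i.e.\ I should phrase everything directly in terms of the known sharp asymptotics of the concentrating sequence (for which $I_{8\pi+\eta}(u_k)\to-\infty$) rather than re-deriving them, and choose $\delta = \delta(\lambda)$ small enough that $\lambda(1-\delta)>8\pi$ still holds. I expect this to be the main (though routine) obstacle; everything else follows from \eqref{fontana} and the positivity $e^{\alpha u} \leqslant e^{|u|}$.
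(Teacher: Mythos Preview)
Your approach is essentially identical to the paper's: the ``if'' direction via $e^{\alpha u}\leqslant e^{|u|}$ combined with \eqref{fontana}, and the ``only if'' direction by restricting the $\alpha$-integral to $[1-\delta,1]$ (respectively $[-1,-1+\delta]$) and reducing to the unboundedness of $I_\lambda$ on nonnegative test functions.

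One bookkeeping correction, which is exactly the point you flagged as delicate: the threshold you need is $\lambda(1-\delta)^2>8\pi$, not $\lambda(1-\delta)>8\pi$. The paper sidesteps any explicit bubble asymptotics by writing, for $u\geqslant0$,
\[
J_\lambda(u)\leqslant\frac{1}{(1-\delta)^2}\,I_{\lambda(1-\delta)^2}\bigl((1-\delta)u\bigr)-\lambda\log\calP([1-\delta,1]),
\]
so that unboundedness of $J_\lambda$ follows directly from unboundedness of $I_{\lambda(1-\delta)^2}$ on nonnegative functions once $\lambda(1-\delta)^2>8\pi$. This is cleaner than tracking the asymptotics of a specific concentrating sequence, and makes the exponent $2$ transparent. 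Your sketch of the case $-1\in\mathrm{supp}\,\calP$ is fine in spirit; just replace $u$ by $-u$ (still nonnegative after the sign flip of the original bubble) and run the same rescaling identity with $\calP([-1,-1+\delta])$.
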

Proposition~\ref{prop:TM} was established for functions $u\in H^1(M)$
satisfying $\int_Mu=0$, where $M$ is a two-dimensional Riemannian manifold, in \cite{RZ}.
The proof for $u\in H^1_0(\Om)$ is similar. For the sake of completeness, we outline it below. 
In the improved Moser-Trudinger inequality we show that the best constant
in Proposition~\ref{prop:TM} may be lowered if the ``mass" of $u$
is suitably distributed. 
Namely, following ideas of \cite{Aub, cl}, we prove:
\begin{prop}[Improved Moser-Trudinger inequality]
\label{mtmigliorata} 
Let   $d_0>0$ and  $a_0\in (0,1/2).$ Then, for any $\ee>0,$ there exists a constant $K=K(\ee,d_0,a_0)>0$ such that if $u\in H_0^1(\Om)$ satisfies 
\begin{equation}
\label{omegai}
\frac{\iint_{I\times\Om_i } e^{\alpha u}\calP (d\alpha)dx}{\iint_{I\times\Om} e^{\alpha u}\calP (d\alpha)dx}\geqslant a_0,\qquad i=1,2
\end{equation}
where  $\Om_1, \Om_2\subset \Om $ are two measurable sets verifying dist$(\Om_1,\Om_2)\geqslant d_0$, then it holds
\begin{equation}\label{tesimt}
\iint_{I\times\Om} e^{\alpha u}\calP (d\alpha)dx\leqslant K  \exp\left\{ \left( \frac1{32 \pi} +\ee\right) \|\nabla u\|^2_2\right\}.
\end{equation}
\end{prop}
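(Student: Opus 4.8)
The plan is to reduce the statement to the classical Moser--Trudinger inequality \eqref{fontana} applied to a truncation of $u$ supported near each of the two sets $\Om_1,\Om_2$, following the Aubin--Chen--Li localization scheme. First I would fix cut-off functions: since $\mathrm{dist}(\Om_1,\Om_2)\geqslant d_0$, choose $g_1,g_2\in C_c^\infty(\Om)$ with $g_i\equiv1$ on a $d_0/3$-neighborhood of $\Om_i$, $0\leqslant g_i\leqslant1$, $\mathrm{supp}\,g_1\cap\mathrm{supp}\,g_2=\emptyset$, and $\|\nabla g_i\|_\infty\leqslant C(d_0)$. Set $u_i=g_i u$, so that $\Om_i\subset\{u_i=u\}$ and the supports of $u_1,u_2$ are disjoint; hence $\|\nabla u_1\|_2^2+\|\nabla u_2\|_2^2\leqslant\|\nabla u\|_2^2+C(d_0)\|u\|_{L^2(\mathrm{supp}\,\nabla g_1\cup\mathrm{supp}\,\nabla g_2)}\|\nabla u\|_2+C(d_0)\|u\|_2^2$, which by Poincar\'e and Young gives $\|\nabla u_1\|_2^2+\|\nabla u_2\|_2^2\leqslant(1+\ee')\|\nabla u\|_2^2+C(\ee',d_0)$ for any $\ee'>0$ of our choosing.

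Next I would apply the sharp inequality on each piece. From \eqref{fontana} and the elementary bound $|\alpha t|\leqslant|t|\leqslant \frac{\|\nabla w\|_2^2}{16\pi}+4\pi\frac{w^2}{\|\nabla w\|_2^2}$ used just before Proposition~\ref{prop:TM} (valid for $\alpha\in[0,1]$), applied with $w=u_i$, one gets
\begin{equation*}
\iint_{I\times\Om} e^{\alpha u_i}\,\calP(d\alpha)\,dx\leqslant \int_\Om e^{|u_i|}\,dx\leqslant C_{MT}\exp\Big\{\tfrac{1}{16\pi}\|\nabla u_i\|_2^2\Big\}.
\end{equation*}
Since $u_i=u$ on $\Om_i$, the left side dominates $\iint_{I\times\Om_i}e^{\alpha u}\,\calP(d\alpha)\,dx$, which by hypothesis \eqref{omegai} is at least $a_0\iint_{I\times\Om}e^{\alpha u}\,\calP(d\alpha)\,dx$. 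Therefore
\begin{equation*}
a_0\iint_{I\times\Om}e^{\alpha u}\,\calP(d\alpha)\,dx\leqslant C_{MT}\exp\Big\{\tfrac{1}{16\pi}\|\nabla u_i\|_2^2\Big\},\qquad i=1,2.
\end{equation*}

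Finally I would multiply the two inequalities ($i=1$ and $i=2$) and take square roots:
\begin{equation*}
a_0\iint_{I\times\Om}e^{\alpha u}\,\calP(d\alpha)\,dx\leqslant C_{MT}\exp\Big\{\tfrac{1}{32\pi}\big(\|\nabla u_1\|_2^2+\|\nabla u_2\|_2^2\big)\Big\}\leqslant C_{MT}\exp\Big\{\tfrac{1+\ee'}{32\pi}\|\nabla u\|_2^2+C(\ee',d_0)\Big\}.
\end{equation*}
Choosing $\ee'=32\pi\ee$ and absorbing $a_0^{-1}$, $C_{MT}$, $e^{C(\ee',d_0)}$ into a single constant $K=K(\ee,d_0,a_0)$ yields \eqref{tesimt}. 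The one point requiring genuine care — the ``main obstacle'' — is the gradient estimate in the first paragraph: the cross term $2\int \nabla u\cdot(u\nabla g_i)$ and the $|u\nabla g_i|^2$ term must be controlled so that the factor in front of $\|\nabla u\|_2^2$ stays $1+\ee'$ rather than a fixed constant $>1$; this is where one uses that $\nabla g_i$ is supported in a fixed annular region (depending only on $d_0$) together with the Poincar\'e inequality on $\Om$ and Young's inequality with a small parameter, exactly as in \cite{Aub,cl}. Everything else is a direct packaging of \eqref{fontana}.
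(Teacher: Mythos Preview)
Your overall architecture (disjoint cut-offs, apply sharp Moser--Trudinger on each piece, multiply and take a square root) is the same as the paper's, and the reduction $\iint_{I\times\Om_i}e^{\alpha u}\leqslant\int_\Om e^{|u_i|}$ together with hypothesis~\eqref{omegai} is fine.  The gap is exactly where you flag it, and your proposed fix does not close it.

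Writing $u_i=g_iu$ gives $\|\nabla u_i\|_2^2=\int g_i^2|\nabla u|^2+2\int g_iu\,\nabla g_i\!\cdot\!\nabla u+\int u^2|\nabla g_i|^2$.  The last two terms are bounded by $C(d_0)\|u\|_2\|\nabla u\|_2$ and $C(d_0)\|u\|_2^2$.  Poincar\'e on $H_0^1(\Om)$ only yields $\|u\|_2^2\leqslant C_P\|\nabla u\|_2^2$ with a \emph{fixed} constant $C_P=C_P(\Om)$, so after Young you obtain at best
\[
\|\nabla u_1\|_2^2+\|\nabla u_2\|_2^2\leqslant\bigl(1+\ee'\bigr)\|\nabla u\|_2^2+C(\ee',d_0)\|u\|_2^2
\leqslant\bigl(1+\ee'+C(\ee',d_0)\,C_P\bigr)\|\nabla u\|_2^2,
\]
and there is no additive constant here: the contribution $C(\ee',d_0)\,C_P$ sits in front of $\|\nabla u\|_2^2$ and does not go to zero as $\ee'\to0$ (in fact $C(\ee',d_0)\to\infty$).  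So your inequality $\|\nabla u_1\|_2^2+\|\nabla u_2\|_2^2\leqslant(1+\ee')\|\nabla u\|_2^2+C(\ee',d_0)$ is false as written, and the final constant in front of $\|\nabla u\|_2^2$ is a fixed number $>1$, which kills the improvement from $\tfrac{1}{16\pi}$ to $\tfrac{1}{32\pi}+\ee$.

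The device the paper (and \cite{cl}) actually uses is a level-set truncation: one works with $g_1(|u|-a)^+$ instead of $g_1u$, choosing $a=a(\eta,u)$ so that $|\{|u|>a\}|\leqslant\eta$.  Then the dangerous term becomes $\|(|u|-a)^+\nabla g_1\|_2^2\leqslant C(d_0)\,\|(|u|-a)^+\|_2^2\leqslant C(d_0)\,\eta^{1/2}\|\nabla u\|_2^2$ via H\"older on the small set and the embedding $H_0^1\hookrightarrow L^4$, and now the prefactor \emph{can} be made small by taking $\eta$ small.  The cost of the shift is an extra $e^a$, which is controlled by $a\leqslant\tfrac{\delta}{2}\|\nabla u\|_2^2+C(\delta,\eta)$ (Schwarz plus Poincar\'e on the set $\{|u|\geqslant a\}$).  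Inserting this truncation into your scheme repairs the argument; without it the proof does not go through.
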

We begin by outlining the proof of Proposition~\ref{prop:TM}.
\begin{proof}[Proof of Proposition~\ref{prop:TM}]
The ``if" part is immediate and was already used in \cite{ne}.
Indeed, since
\begin{equation*}
\iint_{I\times\Om}  e^{\alpha u}\calP(d\alpha) dx  \leqslant \int_\Om e^{|u|} dx\leqslant C_{MT} e^{
\frac1{16\pi} \|\nabla u\|_2^2},
\end{equation*}
for all  $u \in H^{1}_0(\Om)$. Therefore $J_\lambda$ is bounded below
if $\lambda\leqslant 8\pi$.
On the other hand
the value $8\pi$ is also optimal,
provided that $\mathrm{supp}\calP\cap\{-1,1\}\neq\emptyset$.
Indeed, the following holds:
We need only prove that
\begin{equation}
\label{opt}
\inf_{u\in H^1_0(\Om),}J_\lambda(u)=-\infty, \qquad \quad\forall \lambda >8\pi.
\end{equation}
Assume that  $1\in $ supp$\mathcal{P}$ (the case $-1\in $ supp$\mathcal{P}$ is similar). Since the functional $I_\lambda(u)$
is unbounded below for $\lambda >8\pi$, then also the functional
\begin{equation*}
I_\lambda(u)_{|_{u\geqslant 0}}=\frac{1}{2}\|\nabla u\|_2^2-\lambda\log\int_{\Om}e^{u}\,dx, \qquad u\in H^1_0(\Om),\,\, u\geqslant 0
\end{equation*}
is unbounded below for $\lambda >8\pi$. At this point we observe that for every $0<\delta<1$ and $u\geqslant 0$, $u\in H^1_0(\Om),$ we have:
\begin{equation}
\label{MT-posit}
\begin{split}
J_\lambda (u) &= \frac 12 \|\nabla u\|_{2}^2-\lambda \log \iint_{I\times\Om} e^{\alpha u}\calP (d\alpha )dx
\leqslant \frac 12 \|\nabla u\|_{2}^2-\lambda \log \iint_{[1-\delta,1]\times \Om}  e^{\alpha u}\calP(d\alpha)  dx\\
&\leqslant \frac 12\|\nabla u\|_{2}^2-\lambda \log \int_\Om  e^{(1-\delta) u} dx -\lambda \log (\calP([1-\delta,1]))\\
&= \frac{1}{(1-\delta)^2} \left[\frac 12 \|(1-\delta)\nabla  u\|_2^2-\lambda (1-\delta)^2 \log\left(\int_\Om  e^{(1-\delta) u} dx\right) \right] -\lambda \log (\calP([1-\delta,1])) \\
&= \frac{1}{(1-\delta)^2} I_{\lambda (1-\delta)^2} \left ((1-\delta) u\right) -\lambda \log (\calP([1-\delta,1])). 
\end{split}
\end{equation}
Hence, for $\lambda (1-\delta)^2>8\pi$, the right hand side of last inequality is unbounded from below  and so
\[
\inf_{u\in H^1_0 (\Om)}J_\lambda (u) =-\infty  \qquad \mbox{ for any }\lambda >\frac{8\pi}{(1-\delta)^2}.
\]
Since $\delta\in(0,1)$ is arbitrary, \eqref{opt} follows.
\end{proof}
In order to prove Proposition~\ref{mtmigliorata},
We adapt  some ideas contained in \cite{cl}, Proposition~1.   
\begin{proof}
Let $g_1$ and $g_2$ be smooth functions defined on $\Om$
such that $0\leqslant g_i\leqslant 1,$  $i=1,2$;
$g_i \equiv 1$ on $\Om_i$, $i=1,2$;  $\mathrm{supp}g_1\cap\mathrm{supp}g_2 =\emptyset$;
$|\nabla g_i |\leqslant c(d_0),$  $i=1,2$. 
We may assume that $\|g_1 \nabla u\|_{L^2(\Om)}\leqslant \| g_2\nabla  u\|_{L^2(\Om)}$  (otherwise it is sufficient to switch the functions $g_1$ and $g_2$). 
Denote, for every real number $t$,  $t^+= \max \{0,t\}$ and let $a>0$.
In view of the elementary inequality
\begin{equation*}
g_1(| u|-a)^+\leqslant \frac1{16 \pi}  \|\nabla \left[ g_1(| u|-a)^+\right] \|^2_{L^2(\Om)}
+\frac{4\pi (g_1 (| u|-a)^+)^2}{ \|\nabla \left[ g_1(| u|-a)^+\right] \|^2_{L^2(\Om)}},
\end{equation*}
we derive from \eqref{fontana} 
that
\begin{equation}
\label{passo}
\int_{ \Om}e^{g_1(| u|-a)^+}\leqslant  C_{MT}\exp\left\{\frac1{16 \pi}  \|\nabla \left[ g_1(| u|-a)^+\right] \|^2_{L^2(\Om)}\right \}.
\end{equation}
Hence, using \eqref{omegai} and  \eqref{passo}, 
and using the elementary  inequality $(A+B)^2\leqslant (1+\tau)A^2+c(\tau) B^2$
for any $\tau>0 $, we have
\begin{align*}
\label{mtmigl}
\iint_{I\times\Om}&e^{\alpha u} \Pda dx\leqslant \frac{e^a}{a_0} \iint_{I\times \Om_1}e^{(\alpha u-a)^+} \mathcal{P}(d\alpha )dx\\
&\leqslant \frac{e^a}{a_0} \iint_{I\times \Om}e^{g_1(\alpha u-a)^+} \mathcal{P}(d\alpha) dx\leqslant \frac{e^a}{a_0} \int_{\Om}e^{g_1(| u|-a)^+}dx \\
&\leqslant\frac{ C }{a_0 }  \exp \left\{  \frac1{16 \pi}  \|\nabla\left[  g_1(|u|-a)^+ \right]\|^2_{L^2(\Om)}+a\right\}\\
& \leqslant\frac{ C }{a_0 } \exp \left\{  \frac1{16 \pi} \left[ (1+\tau) \|g_1\nabla u \|^2_{L^2(\Om)}  \right.\right.\\
&\left. \left.    +c(\tau) \| (| u|-a)^+\nabla g_1\| _{L^2(\Om)}^2\right]
+a\right\}\\
& \leqslant\frac{ C }{a_0 }  \exp \left\{  \frac1{32 \pi} \left[ (1+\tau) \|(g_1+g_2)\nabla u \|^2_{L^2(\Om)}+c(\tau,d_0) \| (| u|-a)^+\| _{L^2(\Om)}^2\right]+a\right\}\\
&\leqslant\frac{ C }{a_0} \exp \left\{  \frac1{32 \pi} \left[ (1+\tau) \|\nabla u \|^2_{L^2(\Om)}+c(\tau,d_0) \| (| u|-a)^+\| _{L^2(\Om)}^2\right]+a\right\}
\end{align*}
for some small $\tau>0,$ where $C=C(\Om)$. 
For a given real number $\eta\in(0,|\Om|)$, let 
\[
a=a(\eta, u) =\sup \left\{ c\geqslant 0 : \mbox{ meas }\{x\in \Om : | u(x)|\geqslant c\} >\eta\right\}.
\]
We have
\begin{equation*}\label{unsigm}
\begin{split}
 \|(| u|-a)^+\|^2_{L^2(\Om)} =\int_{\{x\in \Om :|u|>a\}} (|u|-a )^2 \leqslant\eta^{\frac12}C  \|\nabla u \|^2_2
\end{split}
\end{equation*}
Using the Schwarz and Poincar\'e inequalities, we finally derive
\begin{equation*}
a\eta\leqslant\int_{ \{ | u| \geqslant a\} }|u| \leqslant  |\Om|^\frac12 \left( \int_\Om |u|^2\right)^\frac12 \leqslant C \|\nabla u\|_2,
\end{equation*}
and therefore, for any small $\delta>0$,
\begin{equation*}\label{av1}
a \leqslant \frac \delta 2\|\nabla u \|_2^2 +\frac{C^2}{2\delta \eta^2}.
\end{equation*}
The asserted improved Moser-Trudinger inequality \eqref{tesimt} is completely established.
\end{proof}
\section{Struwe's Monotonicity Trick: a unified form}
\label{sec:struwe}
The aim of this section is to establish Struwe's Monotonicity Trick
in a unified form convenient for application to both Theorem~\ref{main-link}
and Theorem~\ref{main-mp}.
Let $\Lambda\subset \R_+$ be a bounded interval and let $\mathcal H$ be a Hilbert  space. 
In this section, for $\lambda\in\Lambda$, we consider functionals of the form 
\[
\mathcal{J}_\lambda (w)= \frac{1}{2}\left\| w\right\|
^{2}-\lambda \mathcal{G}(w)
\] 
defined for every $w\in \mathcal H$,  where  $\mathcal G\in \mathcal C^2 (\mathcal H;\R)$ satisfies:
\begin{align}
\label{calG}
&\mathcal G'\ \mathrm{is\ compact},  
&&\left\langle \mathcal{G''}(w)\fip,\fip\right \rangle\geqslant0\ \mathrm{for\ every}\ w,\fip\in\mathcal H.
\end{align}
We do not make any sign assumption on $\mathcal G$.
Let $V\subset \R ^m$ be a bounded domain.
We consider the family
\begin{equation*}
\mathcal F_\lambda := \left\{  f\in \mathcal C (V;\mathcal H ): f \mbox{ satisfies }  \mathscr P(\de V)\right\}
\end{equation*}
where $\mathscr P(\de V)$ is a set of properties defined on $\de V$, including 
a property of the form:
\begin{equation}\label{limsup}
\limsup_{\theta \rightarrow \de V} \mathcal J _\lambda (f(\theta)) \leqslant A,  
\end{equation}
for some  $A\in [-\infty,+\infty)$. 
We assume that  for every $ \lambda,\lambda' \in \Lambda$ it holds that $\mathcal F_\lambda \not=\emptyset$ and
\begin{equation*}
 \lambda'<\lambda\Longrightarrow \mathcal F_{\lambda'} \subseteq \mathcal F_{\lambda}.
\end{equation*}
Under these assumptions, we define the minimax value: 
\begin{equation*}\label{minmax-c}
c_\lambda=\inf_{f  \in\mathcal F_\lambda} \sup_{\theta \in V } \mathcal{J}_\lambda (f(\theta)).
\end{equation*}
and we assume that  $c_\lambda$ is finite for every $\lambda.$ 
Since for every fixed $w\in \mathcal H$ the function $\lambda^{-1}\mathcal J_\lambda$ is non-increasing
with respect to $\lambda$, 
$\lambda^{-1}c_\lambda$ is non-increasing  as well. 
Therefore, writing $c_\lambda=\lambda(\lambda^{-1}c_\lambda)$ we see that
\[
\left. c'_\lambda=\frac{d c_{\lambda+\ee}}{d\ee}\right|_{\ee=0}
\]
is well-defined and finite for almost every $\lambda\in \Lambda$.
We shall use the monotonicity trick in the following form.  
\begin{prop}[Struwe's Monotonicity Trick]
\label{struwe}
Suppose that $\mathcal{G}$ satisfies assumptions~\eqref{calG}
and let $\lambda\in \Lambda$ be such that $c'_\lambda$ exists. 
If 
\begin{equation}
\label{-A}
c_\lambda>A,
\end{equation}
then $c_\lambda$ is a critical value for $\mathcal J_\lambda$.
That is, there exists $w\in \mathcal H $ such that $\mathcal J_\lambda (w)=c_\lambda$ and $\mathcal J'_\lambda (w) =0$.
\end{prop}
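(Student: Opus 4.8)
The plan is to implement the classical Struwe monotonicity argument: exploit the almost-everywhere differentiability of $\lambda\mapsto\lambda^{-1}c_\lambda$ to produce, for those $\lambda$ where $c_\lambda'$ exists, a \emph{bounded} Palais--Smale sequence for $\mathcal J_\lambda$, and then invoke the compactness built into $\mathcal G$ to extract a critical point at level $c_\lambda$.

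\medskip
\noindent\textbf{Step 1: A quantitative minimax estimate from the derivative.} Fix $\lambda$ where $c_\lambda'$ exists and choose a decreasing sequence $\lambda_k\uparrow\lambda$ (or $\lambda_k\downarrow\lambda$ — one uses one-sided difference quotients on each side). For each $k$ pick a near-optimal $f_k\in\mathcal F_{\lambda_k}$ with $\sup_{\theta\in V}\mathcal J_{\lambda_k}(f_k(\theta))\leqslant c_{\lambda_k}+(\lambda-\lambda_k)$. Since $\mathcal F_{\lambda_k}\subseteq\mathcal F_\lambda$ when $\lambda_k<\lambda$, each $f_k$ is admissible for $\mathcal J_\lambda$ as well, and from $\mathcal J_\lambda(w)-\mathcal J_{\lambda_k}(w)=-(\lambda-\lambda_k)\mathcal G(w)$ together with the elementary comparison $\mathcal J_\lambda\leqslant\mathcal J_{\lambda_k}$ on the sublevel region where $\mathcal G\geqslant0$, one controls $\sup_\theta\mathcal J_\lambda(f_k(\theta))$ by $c_{\lambda_k}+o(1)$ plus a term measuring $\mathcal G(f_k(\theta))$ on the near-maximizing set. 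The standard output is: there is a constant depending on $c_\lambda'$ such that near-maximizers $w$ of $\mathcal J_\lambda\circ f_k$ satisfy both $\mathcal J_\lambda(w)=c_\lambda+o(1)$ \emph{and} a bound $\|w\|^2\leqslant C$; this is where $c_\lambda>A$ enters, guaranteeing that the near-maximizers lie away from $\partial V$ (where, by \eqref{limsup}, $\mathcal J_\lambda\leqslant A<c_\lambda$) so that the minimax level is genuinely attained over the interior and one may vary $f_k$ there.

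\medskip
\noindent\textbf{Step 2: From bounded near-maximizers to a bounded Palais--Smale sequence.} With the uniform bound $\|w_k\|\leqslant C$ in hand for a sequence $w_k$ with $\mathcal J_\lambda(w_k)\to c_\lambda$, a deformation/Ekeland argument on the restricted class $\mathcal F_\lambda$ yields $\|\mathcal J_\lambda'(w_k)\|\to0$. Concretely: if no such sequence existed, there would be $\delta>0$ with $\|\mathcal J_\lambda'(w)\|\geqslant\delta$ for all $w$ in the slab $\{|\mathcal J_\lambda-c_\lambda|\leqslant\epsilon,\ \|w\|\leqslant 2C\}$, and one could construct an explicit deformation (a pseudo-gradient flow truncated to this slab, which leaves $\partial V$ untouched since $A<c_\lambda$) pushing $\sup_\theta\mathcal J_\lambda(f_k(\theta))$ below $c_\lambda$, contradicting the definition of $c_\lambda$. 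The paper emphasizes exhibiting the deformation explicitly, so I would write it as the time-one map of $\dot u=-\chi(u)\,\mathcal J_\lambda'(u)/\|\mathcal J_\lambda'(u)\|$ with a cutoff $\chi$ supported in the slab.

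\medskip
\noindent\textbf{Step 3: Compactness.} Given the bounded Palais--Smale sequence $w_k\rightharpoonup w$ weakly in $\mathcal H$, write $w_k=\lambda\,\mathcal G'(w_k)+\mathcal J_\lambda'(w_k)$. Since $\mathcal G'$ is compact (assumption \eqref{calG}), $\mathcal G'(w_k)\to\mathcal G'(w)$ strongly along a subsequence, and $\mathcal J_\lambda'(w_k)\to0$ strongly; hence $w_k\to w$ strongly in $\mathcal H$. Continuity of $\mathcal J_\lambda$ and $\mathcal J_\lambda'$ then gives $\mathcal J_\lambda(w)=c_\lambda$ and $\mathcal J_\lambda'(w)=0$, as claimed. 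The convexity hypothesis $\langle\mathcal G''(w)\varphi,\varphi\rangle\geqslant0$ is not needed for compactness per se but is used in Step~1 to control the sign of $\mathcal G$ along the difference-quotient comparison; I would keep it in play there rather than here.

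\medskip
\noindent The main obstacle is Step~1: making the passage from ``$c_\lambda'$ exists'' to a \emph{uniform} bound on near-maximizers rigorous requires carefully chaining the one-sided difference quotients of $\lambda^{-1}c_\lambda$ with the identity $\mathcal J_\lambda-\mathcal J_{\lambda'}=-(\lambda-\lambda')\mathcal G$, and handling the fact that $\mathcal G$ has no sign — one must instead bound $\mathcal G$ from below using $\|w\|^2/2 = \mathcal J_\lambda(w)+\lambda\mathcal G(w)\leqslant c_\lambda+1+\lambda\mathcal G(w)$ and from above via the derivative estimate, closing the loop to extract the norm bound. Everything downstream (Steps~2--3) is then the routine deformation-plus-compactness package.
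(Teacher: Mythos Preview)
Your three-step outline is the same skeleton as the paper's proof (Lemma~4.1 for the norm bound on near-maximizers, Lemma~4.2 for the deformation argument, then compactness of $\mathcal G'$), so the approach is essentially correct and aligned with the paper.

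However, you have misplaced the role of the convexity hypothesis $\langle\mathcal G''(w)\varphi,\varphi\rangle\geqslant0$. In the paper it is \emph{not} used in Step~1: the upper bound on $\mathcal G$ comes directly from the identity
\[
\mathcal G(w)=\frac{\mathcal J_{\lambda-\ee}(w)-\mathcal J_\lambda(w)}{\ee}
\leqslant\frac{c_{\lambda-\ee}-c_\lambda}{\ee}+2\to -c_\lambda'+2,
\]
valid for any $w$ satisfying the two minimax inequalities \eqref{1}, with no sign control on $\mathcal G$ needed; then $\|w\|^2=2\mathcal J_\lambda(w)+2\lambda\mathcal G(w)$ gives the norm bound. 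Your description of ``closing a loop'' between upper and lower bounds on $\mathcal G$ is more complicated than required. Where the paper \emph{does} use convexity is in Step~2: rather than a pseudo-gradient flow, it takes the explicit single-step deformation
\[
\tilde f_\ee(\theta)=f_\ee(\theta)-\sqrt{\ee}\,\xi_\ee(\theta)\,\frac{\mathcal J_\lambda'(f_\ee(\theta))}{\|\mathcal J_\lambda'(f_\ee(\theta))\|},
\]
and controls $\mathcal J_\lambda(\tilde f_\ee(\theta))$ via the second-order Taylor inequality $\mathcal J_\lambda(w+\varphi)\leqslant\mathcal J_\lambda(w)+\mathcal J_\lambda'(w)\varphi+\tfrac12\|\varphi\|^2$, which holds precisely because $-\lambda\langle\mathcal G''\varphi,\varphi\rangle\leqslant0$. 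Your flow-based deformation would indeed avoid this use of convexity (making the hypothesis superfluous in your version), but then your claim that convexity enters Step~1 leaves that assumption floating with no actual role. This is not a gap in the logic of the proof itself, but you should relocate the hypothesis to where it is genuinely invoked, or else note that with a flow deformation it can be dropped.
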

In order to prove Proposition~\ref{struwe} we need some lemmas.
\begin{lem}
\label{lemma-uno}
Let $\lambda\in \Lambda$ be such that $c'_\lambda$ exists. Then, there exist two  constants $K=K(c'_\lambda)>0$ and $\bar \ee>0$  such that for all $\ee\in (0,\bar \ee)$ and  for all $w\in \mathcal H $ satisfying
\begin{equation}
\label{1}
\mathcal J_\lambda (w)\geqslant c_\lambda -\ee \qquad \mbox{ and }\qquad  \mathcal J_{\lambda-\ee} (w)\leqslant c_{\lambda -\ee}+\ee 
 \end{equation}
it holds that 
\begin{description}
\item{(i)} $\mathcal{G} (w) \leqslant K$;
\item{(ii)} $\| w\| \leqslant \sqrt{2(c_\lambda +\lambda  K+1)}$ ;
\item{(iii)} $| \mathcal{J}_{\lambda}(w) -c_\lambda | \leqslant ( \frac{c_\lambda}\lambda+|c_{\lambda}'|+2) \ee.$
\end{description}
\end{lem}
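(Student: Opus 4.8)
The plan is to prove Lemma~\ref{lemma-uno} by a purely elementary manipulation of the two inequalities in \eqref{1}: neither the compactness of $\mathcal{G}'$ nor the convexity condition in \eqref{calG} is used here, the only structural input being that $\lambda$ is a point at which $c'_\lambda$ exists. The mechanism I would use is the identity
\[
\mathcal{J}_\lambda(w)-\mathcal{J}_{\lambda-\ee}(w)=-\ee\,\mathcal{G}(w)\qquad\text{for every }w\in\mathcal{H},
\]
which lets one chain the upper bound on $\mathcal{J}_{\lambda-\ee}(w)$ and the lower bound on $\mathcal{J}_\lambda(w)$ through $\mathcal{G}(w)$, together with the fact that the incremental quotients of $c$ at $\lambda$ converge to $\pm c'_\lambda$, so that after shrinking $\bar\ee$ they are bounded by $|c'_\lambda|+1$; throughout, $\bar\ee$ is taken small enough that also $\lambda-\bar\ee\in\Lambda$ and $\bar\ee<\lambda$.

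For item (i), I would combine the identity with \eqref{1} to get
\[
\ee\,\mathcal{G}(w)=\mathcal{J}_{\lambda-\ee}(w)-\mathcal{J}_\lambda(w)\leqslant(c_{\lambda-\ee}+\ee)-(c_\lambda-\ee)=c_{\lambda-\ee}-c_\lambda+2\ee,
\]
so $\mathcal{G}(w)\leqslant\frac{c_{\lambda-\ee}-c_\lambda}{\ee}+2$; since $\frac{c_{\lambda-\ee}-c_\lambda}{\ee}\to-c'_\lambda$ as $\ee\to0^+$, for $\ee\in(0,\bar\ee)$ this gives $\mathcal{G}(w)\leqslant|c'_\lambda|+3=:K$, a quantity depending only on $c'_\lambda$. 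For item (ii), I would write $\|w\|^2=2\mathcal{J}_{\lambda-\ee}(w)+2(\lambda-\ee)\mathcal{G}(w)\leqslant 2(c_{\lambda-\ee}+\ee)+2(\lambda-\ee)\mathcal{G}(w)$; since $0<\lambda-\ee<\lambda$ and $K>0$, one has $(\lambda-\ee)\mathcal{G}(w)\leqslant\lambda K$ whether $\mathcal{G}(w)\geqslant0$ (by (i)) or $\mathcal{G}(w)<0$ (trivially), and shrinking $\bar\ee$ so that $c_{\lambda-\ee}+\ee\leqslant c_\lambda+1$ (possible since $c$ is continuous at $\lambda$) yields $\|w\|^2\leqslant2(c_\lambda+\lambda K+1)$.

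For item (iii), the first inequality in \eqref{1} already gives $c_\lambda-\mathcal{J}_\lambda(w)\leqslant\ee$. For the reverse bound I would rearrange the second inequality in \eqref{1} into $\mathcal{G}(w)\geqslant-\frac{c_{\lambda-\ee}+\ee}{\lambda-\ee}$ and then use the identity:
\[
\mathcal{J}_\lambda(w)-c_\lambda=\mathcal{J}_{\lambda-\ee}(w)-\ee\,\mathcal{G}(w)-c_\lambda\leqslant(c_{\lambda-\ee}-c_\lambda)+\ee\Bigl(1+\frac{c_{\lambda-\ee}+\ee}{\lambda-\ee}\Bigr).
\]
After one more shrinking of $\bar\ee$, differentiability and continuity of $c$ at $\lambda$ give $c_{\lambda-\ee}-c_\lambda\leqslant(-c'_\lambda+\tfrac12)\ee$ and $1+\frac{c_{\lambda-\ee}+\ee}{\lambda-\ee}\leqslant 1+\frac{c_\lambda}{\lambda}+\tfrac12$, so the right-hand side is at most $(\frac{c_\lambda}{\lambda}+|c'_\lambda|+2)\ee$; combined with $c_\lambda-\mathcal{J}_\lambda(w)\leqslant\ee$ this is (iii).

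I expect no genuine obstacle here: the whole argument is a few lines of algebra. The only point requiring attention is the simultaneous choice of a single $\bar\ee$ valid for all of (i)--(iii), which reduces to the convergences $\frac{c_{\lambda\pm\ee}-c_\lambda}{\ee}\to c'_\lambda$ and $c_{\lambda-\ee}\to c_\lambda$ as $\ee\to0^+$ — precisely the hypothesis that $\lambda$ is a differentiability point of $c$, available for almost every $\lambda$ as noted before Proposition~\ref{struwe}. The substantive part of the monotonicity trick lies in the subsequent lemmas, where \eqref{calG} is used to convert an element $w$ produced by Lemma~\ref{lemma-uno} into an actual critical point of $\mathcal{J}_\lambda$ at level $c_\lambda$.
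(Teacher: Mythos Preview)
Your proof is correct and follows essentially the same strategy as the paper's: both exploit the identity $\mathcal{J}_{\lambda-\ee}(w)-\mathcal{J}_\lambda(w)=\ee\,\mathcal{G}(w)$ together with the differentiability of $c$ at $\lambda$ to bound $\mathcal{G}(w)$, $\|w\|$, and $|\mathcal{J}_\lambda(w)-c_\lambda|$ in turn. The only cosmetic difference is that the paper packages steps (ii)--(iii) via the monotonicity of $\lambda^{-1}\mathcal{J}_\lambda$ (writing $\mathcal{J}_\lambda(w)\leqslant\frac{\lambda}{\lambda-\ee}\mathcal{J}_{\lambda-\ee}(w)$), whereas you unwind this directly from $\|w\|^2\geqslant0$; the resulting inequalities are literally the same, so there is no substantive divergence.
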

\begin{proof}   
Proof of (i).
By  \eqref{1}  it follows that
\begin{equation*}
\begin{split}
 \mathcal{G}(w)&= \frac{\mathcal{J}_{\lambda-\ee} (w)-\mathcal{J}_{\lambda }(w)}{\ee} 
\leqslant \frac 1\ee \left( c_{\lambda -\ee} - c_{\lambda}\right)+2
=2-c'_\lambda +o(1)
\end{split}
\end{equation*}
for $\ee$ sufficiently small. Hence, $(i)$ follows with $K= 3-c'_\lambda.$
\par
Proof of (ii). By the monotonicity property of $\lambda^{-1}\mathcal J_\lambda$, 
we have $\mathcal J_\lambda (w) \leqslant \frac{\lambda }{\lambda-\ee}\mathcal J_{\lambda-\ee}(w) $  
for all $w\in \mathcal H$. Consequently, in view of  \eqref{1} and (i) we have
\begin{equation*}
\begin{split}
\frac 12\|w\|^2&=  \mathcal{J}_\lambda (w)+ \lambda \mathcal{G}(w) 
\leqslant    \frac{\lambda }{\lambda-\ee}     (c_{\lambda-\ee} +\ee) +\lambda K\\
&\leqslant  \left( 1+ \frac{\ee }{\lambda-\ee}\right)  ( c_\lambda -\ee c'_\lambda+ o(\ee)  +\ee  )+ \lambda K 
\leqslant     c_\lambda+ \lambda K +o(1)
\end{split}
\end{equation*}
for $\ee$ sufficiently small, so that  the (ii) follows. 
\par
Proof of (iii). Similarly,  
\begin{equation*}
\begin{split}
c_\lambda -\ee \leqslant  \mathcal{J}_\lambda (w) \leqslant  \frac{\lambda }{\lambda-\ee}    (c_{\lambda-\ee} +\ee) 
&=\left( 1+ \frac{\ee }{\lambda-\ee}\right)  ( c_\lambda -\ee c'_\lambda+ o(\ee)  +\ee  ) \\
&= c_\lambda +\ee\left (\frac{c_\lambda  }{\lambda-\ee} -c'_\lambda  +1 +o(1)\right).
\end{split}
\end{equation*}
for $\ee$ sufficiently small, and (iii) follows.
\end{proof}
In the next lemma we show the existence of bounded Palais-Smale sequences for $\mathcal J_\lambda$ 
at the level $c_\lambda.$
\begin{lem}
\label{lemma-due} 
Under the assumptions of Proposition~\ref{struwe},  for every $\ee>0$ sufficiently small there exists $w_\ee\in \mathcal H$ such that
\begin{description}
\item{(i)}  $|\mathcal J _\lambda (w_\ee) -c_\lambda| \leqslant C\ee$ 
\item{(ii)} $\|w_\ee\| \leqslant C,$
\item{(iii)} $\|\mathcal J_\lambda' (w_\ee )\|_{\mathcal H'}\rightarrow 0 $ as $\ee\rightarrow 0.$
\end{description}
\end{lem}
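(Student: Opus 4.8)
The plan is to prove Lemma~\ref{lemma-due} by the deformation argument underlying the monotonicity trick: if no $w_\ee$ as in (i)--(iii) existed, one could push a near-optimal path for the parameter $\lambda-\ee$ strictly below the level $c_\lambda$ without changing its boundary behaviour, contradicting the definition of $c_\lambda$; the only quantitative input needed is Lemma~\ref{lemma-uno}. Concretely, fix a small $\ee>0$. Since $\mathcal F_{\lambda-\ee}\subseteq\mathcal F_\lambda$ is non-empty, choose $f_\ee\in\mathcal F_{\lambda-\ee}$ with $\sup_{\theta\in V}\mathcal J_{\lambda-\ee}(f_\ee(\theta))\leqslant c_{\lambda-\ee}+\ee$; then $f_\ee\in\mathcal F_\lambda$ as well, so $\sup_{\theta}\mathcal J_\lambda(f_\ee(\theta))\geqslant c_\lambda$. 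On the ``high set'' $V_\ee:=\{\theta\in V:\ \mathcal J_\lambda(f_\ee(\theta))\geqslant c_\lambda-\ee\}$ the element $w=f_\ee(\theta)$ satisfies the two conditions in \eqref{1}, so Lemma~\ref{lemma-uno} provides constants $K,C_0,C_1$ (independent of $\ee$) with $\mathcal G(f_\ee(\theta))\leqslant K$, $\|f_\ee(\theta)\|\leqslant C_0$ and $c_\lambda-\ee\leqslant\mathcal J_\lambda(f_\ee(\theta))\leqslant c_\lambda+C_1\ee$ for every $\theta\in V_\ee$. Moreover, since $c_\lambda>A$ by \eqref{-A}, property \eqref{limsup} gives a neighbourhood $U_\ee$ of $\de V$ in $V$ on which $\mathcal J_\lambda(f_\ee(\theta))<c_\lambda-\ee$, so $V_\ee$ is a compact subset of $V$ disjoint from $U_\ee$.

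Now put $C:=C_0+C_1+2$ and assume, for contradiction, that there is no $w\in\mathcal H$ with $\|w\|\leqslant C$, $|\mathcal J_\lambda(w)-c_\lambda|\leqslant C\ee$ and $\|\mathcal J_\lambda'(w)\|_{\mathcal H'}\leqslant\sqrt{\ee}$; then $\|\mathcal J_\lambda'(w)\|_{\mathcal H'}>\sqrt{\ee}$ on the band $\mathcal N:=\{w:\|w\|\leqslant C,\ |\mathcal J_\lambda(w)-c_\lambda|\leqslant C\ee\}$. Because $\mathcal G\in\mathcal C^2$ the map $\mathcal J_\lambda'=\mathrm{id}-\lambda\mathcal G'$ is locally Lipschitz, and by compactness of $\mathcal G'$ it is bounded on bounded sets; I would then fix a locally Lipschitz cut-off $\chi:\mathcal H\to[0,1]$, equal to $1$ on $\{\|w\|\leqslant C_0+1,\ c_\lambda-\ee/2\leqslant\mathcal J_\lambda(w)\leqslant c_\lambda+C_1\ee\}$ and supported inside $\mathcal N$, and let $\sigma$ be the (globally defined) flow of the norm-$\leqslant1$, locally Lipschitz vector field $w\mapsto-\chi(w)\,\mathcal J_\lambda'(w)/\|\mathcal J_\lambda'(w)\|$ (set equal to $0$ where $\chi=0$). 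Along $\sigma$, $\mathcal J_\lambda$ is non-increasing, $\|\sigma(t,w)-w\|\leqslant t$, and $\frac{d}{dt}\mathcal J_\lambda(\sigma(t,w))\leqslant-\sqrt{\ee}$ whenever $\chi(\sigma(t,w))=1$. Taking $T_\ee:=(C_1+1/2)\sqrt{\ee}$ (so $T_\ee\leqslant1$ for $\ee$ small) and $\tilde f_\ee(\theta):=\sigma(T_\ee,f_\ee(\theta))$, the three bounds of Step~1 show that each $\theta\in V_\ee$ is moved down to $\mathcal J_\lambda(\tilde f_\ee(\theta))\leqslant c_\lambda-\ee/2$ (it stays inside the ball $\{\|w\|\leqslant C_0+1\}$ because the displacement is $\leqslant T_\ee$, and it cannot leave the region $\{\chi=1\}$ before reaching level $c_\lambda-\ee/2$, since the required descent $(C_1+1/2)\ee$ is attained at rate $\geqslant\sqrt{\ee}$), while for $\theta\notin V_\ee$ one has $\mathcal J_\lambda(\tilde f_\ee(\theta))\leqslant\mathcal J_\lambda(f_\ee(\theta))<c_\lambda-\ee$; hence $\sup_\theta\mathcal J_\lambda(\tilde f_\ee(\theta))\leqslant c_\lambda-\ee/2<c_\lambda$. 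Since $\chi\circ f_\ee\equiv0$ on $U_\ee$ and $\mathcal J_\lambda$ does not increase along $\sigma$, the flow is the identity on $f_\ee(U_\ee)$, so $\tilde f_\ee\equiv f_\ee$ near $\de V$ and therefore $\tilde f_\ee$ still satisfies $\mathscr P(\de V)$, i.e.\ $\tilde f_\ee\in\mathcal F_\lambda$ --- contradicting $c_\lambda=\inf_{f\in\mathcal F_\lambda}\sup_\theta\mathcal J_\lambda(f(\theta))$. Thus the required $w$ exists for every small $\ee$; denoting it $w_\ee$, assertions (i)--(iii) follow, with $\|\mathcal J_\lambda'(w_\ee)\|_{\mathcal H'}\leqslant\sqrt{\ee}\to0$.

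I expect the main obstacle to be the bookkeeping in the second step: the cut-off $\chi$ must be arranged so that the flow $\sigma$ simultaneously (a) remains inside a fixed ball and inside the band $\mathcal N$, where the lower bound $\|\mathcal J_\lambda'\|>\sqrt{\ee}$ is available, so that the descent estimate applies throughout the time $T_\ee$; and (b) is exactly the identity on a neighbourhood of $\de V$ --- this is where \eqref{-A} and the fact that $\mathscr P(\de V)$ constrains only the behaviour of the path near $\de V$ are used to ensure $\tilde f_\ee\in\mathcal F_\lambda$. The structural hypotheses \eqref{calG} enter precisely to make the flow globally well-defined ($\mathcal G\in\mathcal C^2$ for the Lipschitz regularity of $\mathcal J_\lambda'$, compactness of $\mathcal G'$ for its boundedness on bounded sets), while the constants of Lemma~\ref{lemma-uno} are calibrated so that the total descent is completed in a time $T_\ee=O(\sqrt{\ee})\to0$, keeping the displacement controlled; this is the balance one must verify carefully, and everything else (existence and homeomorphism property of $\sigma$, continuity of $\tilde f_\ee$) is routine once it is in place. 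Note that the $C^2$‑convexity half of \eqref{calG} is not needed here --- it will be used, together with the above bounded Palais--Smale sequence, only to conclude Proposition~\ref{struwe}.
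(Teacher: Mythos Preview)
Your argument is correct, but it takes a genuinely different route from the paper's. The paper does not set up a gradient flow at all: after fixing $f_\ee$ exactly as you do, it defines a \emph{one--step} displacement
\[
\tilde f_\ee(\theta)=f_\ee(\theta)-\sqrt{\ee}\,\xi_\ee(\theta)\,\frac{\mathcal J_\lambda'(f_\ee(\theta))}{\|\mathcal J_\lambda'(f_\ee(\theta))\|},
\]
with a cutoff $\xi_\ee$ that depends only on $\theta$ through the level $\mathcal J_\lambda(f_\ee(\theta))$, and then controls $\mathcal J_\lambda(\tilde f_\ee(\theta))$ by the exact second--order bound
\[
\mathcal J_\lambda(w+\varphi)\leqslant\mathcal J_\lambda(w)+\mathcal J_\lambda'(w)\varphi+\tfrac12\|\varphi\|^2,
\]
which is precisely where the convexity half of \eqref{calG} (i.e.\ $\langle\mathcal G''(w)\varphi,\varphi\rangle\geqslant0$) enters. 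So the two approaches trade off: yours is the standard deformation--lemma argument, more bookkeeping but no structural hypothesis beyond $\mathcal G\in C^2$ with $\mathcal G'$ compact; the paper's is shorter and avoids the flow entirely, at the price of using the convexity of $\mathcal G$. Your closing remark is therefore slightly off: in the paper it is \emph{this} lemma, not the passage to Proposition~\ref{struwe}, that consumes the convexity assumption --- the proof of Proposition~\ref{struwe} uses only the compactness of $\mathcal G'$ to upgrade the bounded Palais--Smale sequence to a strongly convergent one. In your scheme the convexity hypothesis is in fact never used.
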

\begin{proof}
Proof of (i)--(ii).
Let $\ee>0.$ By definition of $c_{\lambda-\ee}$
and by the infimum property, there exists $f_\ee\in\mathcal F_{\lambda-\ee}$ such that
\begin{equation*}
\label{suppp}
\sup_{\theta \in V}\mathcal J_{\lambda-\ee} (f_\ee(\theta)) \leqslant c_{\lambda-\ee}+\ee.
\end{equation*}
Moreover, since $ f_\ee \in \mathcal F_{\lambda-\ee}\subseteq  \mathcal F_{\lambda}$,  
by definition of $c_\lambda$ and by the supremum property, there exists $\theta_\ee \in V$ such that
\[
\mathcal J_{\lambda} (f_\ee(\theta_\ee)) \geqslant c_{\lambda}-\ee.
\]
We conclude that $w_\ee:=f_\ee(\theta_\ee) \in\mathcal H$ satisfies \eqref{1}.
Consequently, Lemma~\ref{lemma-uno} implies that,  for $\ee$ sufficiently small,
$\| w_\ee\| \leqslant C$ and $|\mathcal{J}_{\lambda}(w_\ee)-c_\lambda | \leqslant \ee C$,
for some $C>0$ independent of $\ee$.
\par
Proof of (iii).
For $\delta>0$ we set
\[
N_\delta:=\{ w \in \mathcal H : \|w\| \leqslant C, |\mathcal J_\lambda (w) -c_\lambda |<\delta \},
\]
and we note that in view of the already established properties (i)--(ii) we have
$N_\delta\not=\emptyset$. 

Suppose that the claim is false. We shall derive a contradiction by constructing a suitable deformation.
To this end, we assume that there exists $\delta>0$ such that  $\|\mathcal J'_\lambda (w)\|\geqslant \delta$ for every $w \in N_\delta.$ 
Let $\xi\in \mathcal C(\R,\R)$ be a cut-off function such that $0\leqslant \xi\leqslant 1$,
$\xi(t)=0$ if and only if $t \leqslant -2$, $\xi(t)=1$ if and only if $t\geqslant -1$.
We set
\begin{equation*}
\xi_\ee (\theta )  := \xi \left( \frac{\mathcal J_{\lambda} (f_\ee (\theta)) - c_{\lambda} }{\ee} \right),\qquad \forall \theta \in V.
\end{equation*}
For all $\theta \in V$ we define the deformation
\begin{equation*}\label{7} 
\tilde f_\ee(\theta)=f_\ee(\theta)-\sqrt{\ee} \,\,\xi_\ee (\theta ) \frac{\mathcal J'_{\lambda} ( f_\ee (\theta))}{\|\mathcal J'_{\lambda} ( f_\ee (\theta))\|}.
\end{equation*}
In view of \eqref{limsup} and \eqref{-A}, we have $\xi_\ee (\theta )=0$ when $\ee $ is sufficiently small 
and $\theta$ is near $\de V$. In particular, $\tilde f_\ee$ satisfies $\mathscr P(\de V)$ and therefore $\tilde f _\ee \in \mathcal F_{\lambda -\ee}.$
By Taylor expansion of $\mathcal J_\lambda$ and in view of assumption~\eqref{calG} we have for all $w,\varphi\in \mathcal H$
\begin{equation*}
\begin{split}
\mathcal J_{\lambda }(w+\varphi ) &\leqslant \mathcal J_{\lambda }(w) +\mathcal  J'_{\lambda }(w)\varphi + \frac 12 \| \varphi \| ^{2} .
\end{split}
\end{equation*}
Consequently, 
\begin{equation}\label{cancelletto}
\begin{split}
\mathcal J_{\lambda}(\tilde f_\ee(\theta ))&\leqslant\mathcal J_{\lambda }(f_\ee(\theta ))
-\sqrt{\ee} \,\,\xi_\ee (\theta ) \frac{ \|\mathcal J'_{\lambda}(f_\ee(\theta) )\|^2}{\|\mathcal J'_{\lambda}(f_\ee(\theta) )\|}+\frac\ee 2\xi_\ee^2(\theta).
\end{split}
\end{equation}
Now we claim that 
\begin{equation}\label{star}
\sup_{\theta \in V} \mathcal J_\lambda (\tilde f_\ee(\theta)) \leqslant c_\lambda- \frac\ee 2
\end{equation}
for all sufficiently small $\ee>0.$ We prove \eqref{star} by considering two cases.
\par
\underline{Case I}: $\xi_\ee(\theta) <1.$
In this case $\mathcal J_\lambda (f_\ee(\theta)) <c_\lambda -\ee$ and hence,  using \eqref{cancelletto} 
 we estimate
\begin{equation*}\label{cancelletto1}
\begin{split}
\mathcal J_{\lambda }(\tilde f _\ee(\theta ))  &\leqslant c_\lambda -  \frac \ee 2 .
\end{split}
\end{equation*}
\par
\underline{Case II}: $\xi_\ee(\theta) =1.$
In this case, we have that $f_\ee(\theta)$ 
satisfies the assumptions of Lemma \ref{lemma-uno} and consequently $f_\ee(\theta)\in N_\delta$ 
for all $\ee>0$ sufficiently small. By the contradiction assumption we then have $\|\mathcal J_\lambda'(f_\ee(\theta))\|\geqslant \delta$. 
It follows that, using again \eqref{cancelletto} and Lemma \ref{lemma-uno},
\begin{equation*}\label{cancelletto2}
\begin{split}
\mathcal J_{\lambda }(\tilde f_\ee(\theta ))&\leqslant\mathcal J_{\lambda }(f_\ee(\theta))-\sqrt{\ee}\delta+\frac\ee2
\leqslant c_{\lambda}+\ee\left(\frac{c_\lambda}{\lambda}+|c_\lambda'|+\frac52\right)-\sqrt{\ee}\delta
\leqslant c_{\lambda}- \frac{\sqrt{\ee}\delta}{2}
\end{split}
\end{equation*}
for all $\ee>0$ sufficiently small. Taking $\ee\leqslant\delta^2$ we conclude that 
$\mathcal J_{\lambda}(\tilde f_\ee(\theta ))\leqslant c_\lambda-\frac\ee 2$ and claim \eqref{star} is established.
But then we derive
\[
c_\lambda = \inf_{f  \in\mathcal F_\lambda} \sup_{\theta \in V } \mathcal{J}_\lambda (f(\theta))\leqslant  c_\lambda-\frac\ee2,
\]
a contradiction. 
\end{proof}
At this point the proof of Proposition~\ref{struwe} follows by standard compactness arguments.
\begin{proof}[Proof of Proposition~\ref{struwe}] 
Let $w_\en\in\mathcal H$ be a bounded Palais-Smale sequence as obtained in Lemma~\ref{lemma-due}. 
Then there exists $w_0\in\mathcal H$ such that $w_\en\rightharpoonup w_0$ weakly in $\mathcal H$. 
On the other hand, we have 
\begin{align*}
\langle\mathcal J_\lambda'(w_\en),w_\en-w_0\rangle=& \|w_\en-w_0\|^2+\langle w_0,w_\en-w_0\rangle \\
&-\lambda\langle\mathcal G '(w_\en)-\mathcal G '(w_0),w_\en-w_0\rangle-\lambda\langle\mathcal G '(w_0),w_\en-w_0\rangle. 
\end{align*}
Hence, by the compactness assumption \eqref{calG} on  $\mathcal{G}'$ and using the fact that $\|\mathcal J'_\lambda (w_\en)\| \rightarrow 0$,  we obtain
\[
o(1)\|w_\en-w_0\| =\langle \mathcal J_\lambda'(w_\en), w_\en-w_0\rangle=\|w_\en-w_0\|^2 +o(1)
\]
so that $w_\en\rightarrow w_0$ strongly in $\mathcal H.$   This implies, by the continuity of $\mathcal J_\lambda$ on $\mathcal H$,
$\mathcal J_\lambda (w_\en)\rightarrow \mathcal J_\lambda (w_0) =c_\lambda$.
Moreover, since $\mathcal{G}'(w_\en)\varphi \rightarrow \mathcal{G}'(w_0)\varphi$ for all $\varphi\in\mathcal H$,
we conclude that 
$o(1)=\mathcal{J'}_\lambda (w_\en)\varphi=\langle w_\en,\varphi\rangle-\lambda\mathcal{G}'(w_\en) \varphi \rightarrow 
\mathcal{J'}_\lambda (w_0)\varphi$
and therefore $\mathcal{J'}_\lambda (w_0)=0$.
\end{proof}
\section{Proof of Theorem~\ref{main-link}}
\label{sec:minimax}
We begin by establishing the existence almost everywhere of solutions.
We note that assumption~\eqref{calH} is not necessary for this part of Theorem~\ref{main-link}.
We shall use the monotonicity trick, as established in Proposition~\ref{struwe},
 to construct the minimax critical values  for Neri's functional~\eqref{eq:Nerifunctional}
defined on a non-simply connected domain $\Om\subset\Rd$.
Namely, we consider the functional
\begin{equation*}
J_{\lambda}(u)=\frac{1}{2}\int_{\Om }\left\vert \nabla u\right\vert^{2}\,dx 
-\lambda\log\left( \iint_{I\times \Om }e^{\alpha u}\,\mathcal{P}(d\alpha)dx\right),
\end{equation*}
for all $u\in H_0^1(\Om)$.
Our aim in this subsection is to show the following.
\begin{prop}
\label{prop:criticalvalueonOmega}
For almost every $\lambda\in(8\pi,16\pi)$, there exists a saddle-type critical value $c_\lambda>-\infty$ for $J_\lambda$. 
\end{prop}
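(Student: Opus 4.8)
The plan is to place $J_\lambda$ in the abstract framework of Section~\ref{sec:struwe}, to build a min--max family $\mathcal F_\lambda$ whose nontriviality is forced by the non--simple--connectedness of $\Om$, and then to invoke Proposition~\ref{struwe}. The two genuinely nontrivial points are the verification of the strict inequality $c_\lambda>A$ in \eqref{-A} --- which will rest on the improved Moser--Trudinger inequality of Proposition~\ref{mtmigliorata} --- and a topological lemma guaranteeing that the mass of every admissible competitor splits between two far--apart regions.

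\emph{Abstract setup.} Take $\mathcal H=H_0^1(\Om)$ and $\mathcal G(u)=\log\bigl(\iint_{I\times\Om}e^{\alpha u}\,\calP(d\alpha)\,dx\bigr)$, so that $J_\lambda=\mathcal J_\lambda$. Both conditions in \eqref{calG} hold: writing $d\mu=\calP(d\alpha)\,dx$ and $Z_u=\iint e^{\alpha u}\,d\mu$, one computes
\[
\langle\mathcal G''(u)\fip,\fip\rangle=\frac{1}{Z_u}\iint(\alpha\fip)^2e^{\alpha u}\,d\mu-\Bigl(\frac{1}{Z_u}\iint\alpha\fip\,e^{\alpha u}\,d\mu\Bigr)^2\geqslant0
\]
by Cauchy--Schwarz, and $\mathcal G'$ is compact because $u\mapsto e^{u}$ is compact from $H_0^1(\Om)$ into $L^1(\Om)$ (a consequence of \eqref{fontana}), $0\leqslant\alpha\leqslant1$, and dominated convergence. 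As allowed in Section~\ref{sec:struwe}, no sign assumption on $\mathcal G$ is used.

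\emph{The min--max family.} Since $\R^2\setminus\overline\Om$ has a bounded connected component, fix a point $p_0$ in it and a loop $\gamma_0\colon S^1\to\Om$ of winding number one about $p_0$; such a $\gamma_0$ is not contractible in $\Om$. Following \cite{DJLW}, let $V$ be the open unit $2$--disk (so $m=2$, $\partial V\cong S^1$) and let $\mathcal F_\lambda$ consist of all $f\in\mathcal C(\overline V;\mathcal H)$ that, on a collar of $\partial V$, coincide with standard bubbles (appropriately modified so as to belong to $H_0^1(\Om)$) whose center runs once along $\gamma_0$ and whose concentration scale tends to $+\infty$ as $\partial V$ is approached. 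Such a single bubble satisfies $J_\lambda\to-\infty$ whenever $\lambda>8\pi$ (cf.\ the proof of Proposition~\ref{prop:TM}), so \eqref{limsup} holds with $A=-\infty$; the class $\mathcal F_\lambda$ is nonempty (extend the prescribed collar arbitrarily and continuously to $\overline V$), it does not depend on $\lambda$, hence $\mathcal F_{\lambda'}\subseteq\mathcal F_\lambda$ trivially, and $c_\lambda<+\infty$ since $J_\lambda\circ f$ is continuous on the compact $\overline V$.

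\emph{Lower bound and conclusion.} Fix $a_0=1/4$ and small parameters $\ee,r_0,r_1,d_0>0$; to $u\in\mathcal H$ associate the probability measure $\nu_u=Z_u^{-1}\bigl(\int_Ie^{\alpha u}\,\calP(d\alpha)\bigr)\,dx$ on $\Om$, which depends weak-$*$ continuously on $u$. The key claim is that every $f\in\mathcal F_\lambda$ admits $\theta^\ast\in V$ for which $\nu_{f(\theta^\ast)}$ is not $(\ee,r_1)$--concentrated near any single point: otherwise the concentration point $q(f(\theta))\in\Om$ would depend continuously on $\theta\in\overline V$ and restrict to $\gamma_0$ on $\partial V$, yielding a null--homotopy of $\gamma_0$ in $\Om$, which is impossible. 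A standard covering argument then produces from the non--concentration of $\nu_{f(\theta^\ast)}$ two balls $B_{r_0}(x_1),B_{r_0}(x_2)\subset\Om$ with $|x_1-x_2|\geqslant d_0$ and $\nu_{f(\theta^\ast)}(B_{r_0}(x_i))\geqslant a_0$; thus \eqref{omegai} holds for $u=f(\theta^\ast)$ with $\Om_i=B_{r_0}(x_i)$. Since the constant $K$ in Proposition~\ref{mtmigliorata} depends only on $\ee,d_0,a_0$, it applies uniformly and gives $Z_{f(\theta^\ast)}\leqslant K\exp\{(\tfrac1{32\pi}+\ee)\|\nabla f(\theta^\ast)\|_2^2\}$, whence
\[
\sup_{\theta\in V}J_\lambda(f(\theta))\geqslant J_\lambda(f(\theta^\ast))\geqslant\Bigl(\tfrac12-\lambda\bigl(\tfrac1{32\pi}+\ee\bigr)\Bigr)\|\nabla f(\theta^\ast)\|_2^2-\lambda\log K\geqslant-\lambda\log K,
\]
provided $\ee$ is taken so small that $\lambda(\tfrac1{32\pi}+\ee)\leqslant\tfrac12$, which is possible precisely because $\lambda<16\pi$. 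Taking the infimum over $f\in\mathcal F_\lambda$ yields $c_\lambda\geqslant-\lambda\log K>-\infty=A$, so \eqref{-A} holds. The topological step producing $\theta^\ast$ --- that is, ruling out that some competitor stays concentrated throughout $\overline V$ --- is the main obstacle; with it in hand, since $\lambda^{-1}c_\lambda$ is non--increasing, $c_\lambda'$ exists for a.e.\ $\lambda\in(8\pi,16\pi)$, and for such $\lambda$ Proposition~\ref{struwe} produces $u\in H_0^1(\Om)$ with $J_\lambda(u)=c_\lambda$ and $J_\lambda'(u)=0$, i.e.\ the asserted saddle--type critical value.
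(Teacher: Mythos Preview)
Your overall strategy coincides with the paper's: verify \eqref{calG}, build a min--max class over the disk whose boundary behavior is prescribed by bubbles sliding along a non--contractible loop, use the improved Moser--Trudinger inequality together with a topological obstruction to get $c_\lambda>-\infty$, and invoke Proposition~\ref{struwe}. The difference is in how the topological step is carried out.

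The paper does not use a ``concentration point'' map. It works instead with the \emph{baricenter}
\[
m(u)=\int_\Om x\,d\mu_u,\qquad \mu_u=Z_u^{-1}\Bigl(\int_Ie^{\alpha u}\,\calP(d\alpha)\Bigr)\,dx,
\]
which is defined for \emph{every} $u\in H_0^1(\Om)$, is manifestly continuous in $u$, and takes values in the convex hull of $\Om$ --- hence possibly in the hole $B$. The admissible class $\mathcal D_\lambda$ requires that $m\circ h$ extend continuously to $\overline{\mathbb D}$ with degree one onto $\Gamma_1$ on $\partial\mathbb D$. The lower bound (Lemma~\ref{lemma-dom-2}) is then a short contradiction: degree theory forces $m(h(\mathbb D))\supset B$, so some $\theta$ has $m(h(\theta))=x_0\in\mathring B$; but Lemma~\ref{baricentri} (itself a consequence of Proposition~\ref{mtmigliorata}) says that whenever $J_\lambda\to-\infty$ the baricenter tends to a point of $\overline\Om$, which cannot equal $x_0$.

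Your route via a concentration point $q(f(\theta))$ encodes the same idea but leaves the gap you yourself flag: $q$ is only defined when $\nu_{f(\theta)}$ is $(\ee,r_1)$--concentrated, and even then its continuity in $\theta$ is not automatic (mass can transfer slowly between two nearby peaks while staying within the concentration threshold). This can be repaired, but the baricenter sidesteps the issue entirely because it is globally defined and affine in the measure. One further small inconsistency: you take $f\in\mathcal C(\overline V;\mathcal H)$ but let the concentration scale diverge near $\partial V$, so $\|f(\theta)\|\to\infty$ there and $f$ cannot be continuous on $\overline V$; the paper's framework (and the class $\mathcal D_\lambda$) takes $f\in\mathcal C(V;\mathcal H)$ with $V$ open and imposes only the asymptotic condition \eqref{limsup} at $\partial V$. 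Note also that the paper's class genuinely depends on $\lambda$ through condition~(i), so the monotonicity $\mathcal D_{\lambda'}\subseteq\mathcal D_\lambda$ needs the short separate check of Lemma~\ref{lemma-dom-3}.
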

The construction of  $c_\lambda$ relies on an idea
originally introduced by \cite{DJLW} for the standard mean field equation \eqref{eq:standardmfe}.
Such an idea was also exploited in \cite{COS} for the Toda system. Here,
we extend the argument to the case of mean field equations including a probability measure.
\par
For every $u\in H^1_0(\Om)$ we consider the measure:
\begin{align*}
\mu_u=\frac{\int_{I}e^{\alpha u}\calP (d\alpha)}{\iint_{I\times\Om}e^{\alpha u} \calP (d\alpha)dx}\,dx
\in\mathcal M(\Om)
\end{align*}
and the corresponding ``baricenter" of $\Om$:
\begin{align*}
m(u)=&\frac{\iint_{I\times\Om} xe^{\alpha u} \calP (d\alpha)dx}{\iint_{I\times\Om} e^{\alpha u}\calP (d\alpha)dx}
=\int_\Om x\,d\mu_u\in\Rd.
\end{align*}
We note that $\mu_u(\Om)=1$ and $|m(u)|\leqslant\sup\{|x|:\ x\in\Om\}$.
In the following lemma we  show that, as a consequence of Proposition~\ref{mtmigliorata},
if the functional $J_\lambda$ given by \eqref{eq:Nerifunctional} is unbounded
below along a sequence $\un\in H_0^1(\Om)$, and if $\lambda\in(8\pi,16\pi)$,
then $\un$ blows up at exactly one point $x_0\in\overline\Om$.
\begin{lem}
\label{baricentri}
Let $\lambda \in (8\pi,16\pi)$ and  $\{u_n\}\subset H^1_0(\Om)$ 
be a sequence such that $J_\lambda(u_n)\rightarrow -\infty$.
Then, there exists $x_0\in\overline\Omega$ such that
\begin{equation*}
\mu_{\un}\rightharpoonup \delta_{x_0}\quad \mbox{weakly* in }\mathcal C(\bar \Om)'
\qquad \textrm{and\ }\qquad m(\un)\to x_0.
\end{equation*}
\end{lem}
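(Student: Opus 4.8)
The plan is to apply the improved Moser--Trudinger inequality of Proposition~\ref{mtmigliorata} in its contrapositive form: if the mass of $u_n$ (measured through $\mu_{u_n}$) does \emph{not} concentrate near a single point, then $J_\lambda(u_n)$ stays bounded below, contradicting the hypothesis. First I would argue by contradiction, supposing that $\mu_{u_n}$ does not converge weakly$^*$ to a Dirac mass. Since $\overline\Om$ is compact and each $\mu_{u_n}$ is a probability measure, up to a subsequence $\mu_{u_n}\rightharpoonup\mu$ weakly$^*$ for some probability measure $\mu\in\mathcal M(\overline\Om)$, and by assumption $\mu$ is not a Dirac mass. Hence one can find two points $x_1\neq x_2$ in $\mathrm{supp}\,\mu$ and a radius $d_0>0$ with $2d_0<|x_1-x_2|$ such that $\mu(B_{d_0}(x_i))>0$ for $i=1,2$; setting $\Om_i=B_{d_0}(x_i)\cap\Om$ and $a_0:=\tfrac12\min_i\mu(B_{d_0}(x_i))\in(0,1/2)$ (shrinking $d_0$ if needed so that $a_0<1/2$), weak$^*$ convergence gives, for $n$ large,
\[
\frac{\iint_{I\times\Om_i}e^{\alpha u_n}\,\calP(d\alpha)dx}{\iint_{I\times\Om}e^{\alpha u_n}\,\calP(d\alpha)dx}=\mu_{u_n}(\Om_i)\geqslant a_0,\qquad i=1,2,
\]
with $\mathrm{dist}(\Om_1,\Om_2)\geqslant d_0$.

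Next I would invoke Proposition~\ref{mtmigliorata} with these data: for any $\ee>0$ there is $K=K(\ee,d_0,a_0)$ so that
\[
\iint_{I\times\Om}e^{\alpha u_n}\,\calP(d\alpha)dx\leqslant K\exp\Bigl\{\bigl(\tfrac1{32\pi}+\ee\bigr)\|\nabla u_n\|_2^2\Bigr\}.
\]
Plugging this into the definition of $J_\lambda$ yields
\[
J_\lambda(u_n)\geqslant\frac12\|\nabla u_n\|_2^2-\lambda\Bigl(\tfrac1{32\pi}+\ee\Bigr)\|\nabla u_n\|_2^2-\lambda\log K=\Bigl(\frac12-\frac{\lambda}{32\pi}-\lambda\ee\Bigr)\|\nabla u_n\|_2^2-\lambda\log K.
\]
Since $\lambda<16\pi$, we have $\tfrac12-\tfrac{\lambda}{32\pi}>0$, so choosing $\ee$ small enough that $\tfrac12-\tfrac{\lambda}{32\pi}-\lambda\ee\geqslant0$ makes the right-hand side bounded below by $-\lambda\log K$, a constant independent of $n$. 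This contradicts $J_\lambda(u_n)\to-\infty$ and forces $\mu=\delta_{x_0}$ for some $x_0\in\overline\Om$; passing from the subsequence back to the full sequence is standard since the limit is unique along every subsequence. Finally, $m(u_n)=\int_\Om x\,d\mu_{u_n}\to\int_{\overline\Om}x\,d\delta_{x_0}=x_0$ because $x\mapsto x$ is continuous on the compact set $\overline\Om$ and $\mu_{u_n}\rightharpoonup\delta_{x_0}$.

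The only delicate point is checking that the two sets $\Om_i$ can genuinely be chosen with positive normalized mass bounded away from zero \emph{and} with separation bounded below — i.e.\ that $a_0$ can be taken in $(0,1/2)$, which is exactly the hypothesis format required by Proposition~\ref{mtmigliorata}. This is where one uses that $\mu$, not being a Dirac mass, cannot have all its mass arbitrarily close to one point: one picks $x_1$ in the support and then $x_2$ in the support outside a small ball around $x_1$, shrinking the balls so that each still carries positive mass while their total normalized mass near either point is at most $1/2$ (if a ball carried more than half, simply enlarge the \emph{other} region or shrink this one — in any case the complement of a small ball around $x_1$ carries positive mass since $\mu\neq\delta_{x_1}$). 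Everything else is a direct substitution into the inequality and a routine weak$^*$-convergence argument.
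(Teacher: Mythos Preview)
Your argument is correct and reaches the conclusion by a genuinely different route than the paper. The paper does not pass to a weak$^*$ limit measure; instead it works directly with the sequence $\mu_n$ through the concentration function $\mathcal Q_n(r)=\sup_{x\in\Omega}\mu_n(B(x,r)\cap\Omega)$. Choosing $\tilde x_n$ to realise this supremum at scale $r/2$, the paper takes $\Omega_1^n=B(\tilde x_n,r/2)\cap\Omega$ and $\Omega_2^n=\Omega\setminus B(\tilde x_n,r)$ (sets depending on $n$) and applies Proposition~\ref{mtmigliorata} to force $\min\{\mathcal Q_n(r/2),\,1-\mathcal Q_n(r)\}\to0$; a finite covering of $\Omega$ by balls of radius $r/2$ then gives $\mathcal Q_n(r/2)\geqslant k_r^{-1}$, whence $\mathcal Q_n(r)\to1$ for every $r$, and concentration at a single point follows. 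Your approach is cleaner in that the two sets $\Omega_1,\Omega_2$ are fixed once the limit $\mu$ is known and the support argument is transparent, at the price of invoking weak$^*$ compactness and the Portmanteau lower semicontinuity $\liminf_n\mu_n(U)\geqslant\mu(U)$ on open $U$ to transfer the mass bound back to $\mu_n$. One caveat: your claim that ``passing from the subsequence back to the full sequence is standard since the limit is unique along every subsequence'' is not justified---you have shown that every subsequential limit is a Dirac mass, not that the Dirac point is the same for all subsequences. The paper's proof shares this feature (it too passes to a subsequence to obtain $\tilde x_n\to x_0$), and the subsequence statement is all that is used in the application (Lemma~\ref{lemma-dom-2}).
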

\begin{proof}
Throughout this proof, for simplicity, we denote $\mu_n=\mu_{\un}$.
For every fixed $r>0$ we denote by $\mathcal Q_n(r)$ the concentration function of $\mu_n$,
namely,
\[
\mathcal Q_n(r)= \sup_{x\in \Om} \int_{B(x,r)\cap \Om} \mu_n,\qquad (r>0).
\]
For every $n$ we take $\tilde x_n\in\overline\Om$ such that
$\mathcal Q_n(r/2) =\int_{B(\tilde x_n, r/2)\cap \Om} \mu_n$.
Upon taking a subsequence, we may assume that $\tilde x_n\to x_0\in\overline\Om$.
We set $\Om_1^n =B(\tilde x_n, r/2)\cap \Om$ and $\Om_2^n = \Om \setminus B(\tilde x_n,r)$ 
and we note that $\mathrm{dist}(\Om_1^n, \Om_2^n)\geqslant r/2$.  
Since $J_\lambda (u_n) \rightarrow -\infty $ and $\lambda < 16\pi$, in view of Proposition~\ref{mtmigliorata} we 
conclude that $\min\{\mu_n(\Om_1^n), \mu_n(\Om_2^n)\}\rightarrow0$.
In particular, $\min\{\mathcal Q_n (r/2),1-\mathcal Q_n(r)\}\leqslant\min\left( \mu_n(\Om_1^n), \mu_n(\Om_2^n)\right)\rightarrow0.$
On the other hand, for every fixed $r>0$ let $k_r\in\mathbb N$ be such that $\Om$ is covered
by $k_r$ balls of radius $r/2$. Then, $1=\mu_n(\Om)\leqslant k_rQ_n (r/2)$, so that $Q_n (r/2)\geqslant k_r^{-1}$
for every $n$. We conclude that necessarily $\mathcal Q_n(r)\to1$ 
as $n\to\infty$. Since $r>0$ is arbitrary, we derive in turn that
$1-\mathcal Q_n(r/2)=\mu_n(\Om\setminus B(\tilde x_n, r/2))\to0$
as $n\to\infty$. That is, $\mu_{\un}\rightharpoonup\delta_{x_0}$.
It follows that $m(\un)=\int_\Om x\,d\mu_n\to x_0$.
\end{proof}
Let $\Gamma_1\subset\Om$ be a non-contractible curve 
which exists in view of the non-simply connectedness assumption on $\Om$. 
Let $\mathbb D =\{(r,\theta):0\leqslant r<1,\ 0\leqslant \theta <2\pi\}$ be the unit disc. 
We now define the sets of functions which will be used in the minimax argument
by setting
\begin{equation*}
\mathcal D_\lambda= \left\{
\begin{split}
h\in \mathcal C(\mathbb D,H_0^{1,2}(\Om))\ s.t.:\
&\textrm{(i)}\quad\lim_{r\rightarrow 1} \sup_{\theta \in [0,2\pi)} J_\lambda (h(r,\theta))=-\infty,\\
&\textrm{(ii)}\quad m(h(r,\theta))\mbox{\ can be extended continuously to } \bar {\mathbb D},\\
&\textrm{(iii)}\quad m (h(1,\cdot)):\partial\mathbb D\rightarrow\Gamma_1\mbox{ has degree 1 }\\
\end{split}
\right\}.
\end{equation*}
\begin{lem}
\label{lemma-dom-1}
For any $\lambda \in (8\pi,16\pi)$ the set $\mathcal D_\lambda$ is non-empty.
\end{lem}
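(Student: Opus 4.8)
The plan is to exhibit an explicit map $h\in\mathcal C(\mathbb D, H_0^1(\Om))$ satisfying conditions (i)--(iii) in the definition of $\mathcal D_\lambda$. The natural candidate, following the idea of \cite{DJLW}, is to build $h$ out of a family of bubble-type test functions concentrating along the non-contractible curve $\Gamma_1$. Concretely, parametrize $\Gamma_1$ by $\gamma:[0,2\pi)\to\Om$, a Lipschitz closed curve of degree one around the bounded hole in $\Rd\setminus\Om$, and for a concentration parameter $\mu>0$ and a point $p\in\Om$ set
\[
\varphi_{\mu,p}(x)=\log\frac{1}{\left(\mu^2+|x-p|^2\right)^2}-\log\frac{1}{\left(\mu^2+\mathrm{dist}(p,\partial\Om)^2\right)^2},
\]
truncated or corrected near $\partial\Om$ so that it lies in $H_0^1(\Om)$ (for instance, replace it by its projection onto $H_0^1(\Om)$, or subtract a harmonic corrector). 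Then define, for $(r,\theta)\in\mathbb D$,
\[
h(r,\theta)=\varphi_{\,\mu(r),\,\gamma(\theta)},\qquad \mu(r)=1-r,
\]
with the understanding that for $r=0$ one takes a single fixed function (e.g.\ $\mu=1$, any base point), so that $h$ is well-defined and continuous on all of $\mathbb D$, the apparent singularity of the polar parametrization at the origin being harmless because $\mu(0)=1$ and the dependence on $\theta$ disappears in the limit $r\to0$ after an appropriate interpolation.

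\textbf{Verification of (i).} As $r\to1$, i.e.\ $\mu(r)\to0$, the standard bubble computation gives $\|\nabla\varphi_{\mu,p}\|_2^2=16\pi\log\frac1\mu+O(1)$ and $\int_\Om e^{\varphi_{\mu,p}}\,dx\ge c\,\mu^2\cdot\mu^{-4}\cdot\ldots$; more precisely one has $\log\int_\Om e^{\varphi_{\mu,p}}\,dx\ge -4\log\mu - C$ uniformly in $p$ ranging over the compact curve $\Gamma_1\Subset\Om$. Since $\mathrm{supp}\,\calP$ meets $\{-1,1\}$, assumption~\eqref{assumpt:suppP1} applies, and because $1\in\mathrm{supp}\,\calP$ (the case $-1\in\mathrm{supp}\,\calP$ being symmetric after replacing $u$ by $-u$, which leaves $J_\lambda$ of the form treated in Proposition~\ref{prop:TM}) we bound
\[
\iint_{I\times\Om}e^{\alpha u}\,\calP(d\alpha)\,dx\ \ge\ \calP([1-\delta,1])\int_\Om e^{(1-\delta)u}\,dx
\]
exactly as in \eqref{MT-posit}. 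Hence
\[
J_\lambda(h(r,\theta))\le \tfrac12\cdot16\pi\log\tfrac1\mu - \lambda\bigl(4(1-\delta)\log\tfrac1\mu - C\bigr) = \bigl(8\pi-4\lambda(1-\delta)\bigr)\log\tfrac1\mu + C,
\]
uniformly in $\theta$. Choosing $\delta>0$ small enough that $4\lambda(1-\delta)>8\pi$ (possible since $\lambda>8\pi$), the coefficient of $\log\frac1\mu$ is negative, so $\sup_\theta J_\lambda(h(r,\theta))\to-\infty$ as $r\to1$, which is (i).

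\textbf{Verification of (ii)--(iii).} For fixed $r<1$ the measure $\mu_{h(r,\theta)}$ depends continuously on $\theta$ and on $r$, hence $m(h(r,\theta))$ is continuous on $\mathbb D$; one must check it extends continuously to $\bar{\mathbb D}$, which follows because as $r\to1$ the concentration lemma (or a direct estimate on the bubbles) gives $\mu_{h(r,\theta)}\rightharpoonup\delta_{\gamma(\theta)}$ uniformly in $\theta$, so $m(h(r,\theta))\to\gamma(\theta)$ uniformly; thus the extension is $m(h(1,\theta))=\gamma(\theta)\in\Gamma_1$, giving (ii). For (iii), $m(h(1,\cdot))=\gamma:\partial\mathbb D\to\Gamma_1$ is, up to reparametrization, the chosen degree-one parametrization of $\Gamma_1$, so it has degree $1$. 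The main obstacle is \emph{technical rather than conceptual}: one must ensure that the boundary correction needed to place $\varphi_{\mu,\gamma(\theta)}$ in $H_0^1(\Om)$ is done continuously in $(\mu,\theta)$ and does not spoil either the energy estimate in (i) (the corrector contributes only $O(1)$ to $\|\nabla\cdot\|_2^2$ since $\gamma$ stays in a fixed compact subset of $\Om$) or the concentration statement feeding (ii)--(iii). Carrying out this gluing carefully, and checking continuity of $h$ across $r=0$ via a short homotopy from the constant-in-$\theta$ base function to $\varphi_{\mu(r),\gamma(\theta)}$, completes the proof that $\mathcal D_\lambda\neq\emptyset$.
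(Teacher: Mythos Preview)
Your approach is essentially the same as the paper's: concentrate bubble-type test functions along the non-contractible curve $\Gamma_1$, use the estimate \eqref{MT-posit} to pass from the $\calP$-averaged exponential to $\int_\Om e^{(1-\delta)u}$, and verify (ii)--(iii) by showing $\mu_{h(r,\theta)}\rightharpoonup\delta_{\gamma(\theta)}$ uniformly in $\theta$. The paper differs only in its choice of test function: instead of the Liouville bubble it uses the \emph{truncated logarithm} $V_r(X)=4\log\frac{1}{\max(|X|,1-r)}$ on the unit ball, transplanted to a fixed small ball $B(\gamma_1(\theta),\ee_0)\subset\Om$ and extended by zero outside. This has two practical advantages over your choice: it is identically zero outside a compact subset of $\Om$, so no harmonic corrector or $H_0^1$-projection is needed, and it vanishes identically at $r=0$, so $h(0,\theta)\equiv0$ for all $\theta$ and continuity of $h$ at the center of $\mathbb D$ is automatic rather than requiring an ad hoc interpolation.

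There is, however, a concrete numerical error in your verification of (i). For the bubble $\varphi_{\mu,p}(x)=-2\log(\mu^2+|x-p|^2)+O(1)$ the standard computation gives
\[
\|\nabla\varphi_{\mu,p}\|_2^2=32\pi\log\tfrac1\mu+O(1),
\qquad
\log\int_\Om e^{(1-\delta)\varphi_{\mu,p}}=(2-4\delta)\log\tfrac1\mu+O(1),
\]
not $16\pi\log\frac1\mu$ and $4(1-\delta)\log\frac1\mu$. With the correct constants the estimate reads
\[
J_\lambda(h(r,\theta))\le\bigl(16\pi-2\lambda(1-2\delta)\bigr)\log\tfrac1\mu+O(1),
\]
so negativity of the coefficient genuinely requires $\lambda>\frac{8\pi}{1-2\delta}$, which is where the hypothesis $\lambda>8\pi$ enters. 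Your stated condition $4\lambda(1-\delta)>8\pi$ holds already for $\lambda>2\pi$ and therefore does not actually test the assumption. Once these constants are corrected your argument goes through.
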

\begin{proof}
We assume that $1\in $ supp $\mathcal{P}$ (the case $-1 \in$ supp $\mathcal{P}$ can be treated in the same way).  Let $\gamma_1(\theta):[0,2\pi) \rightarrow \Gamma_1$ be a parametrization of $\Gamma_1$ and let 
$\ee_0>0$ be sufficiently small so that $B(\gamma_1(\theta),\ee_0)\subset \Om$. 
Let $\varphi_\theta(x)=\ee_0^{-1}(x-\gamma_1(\theta))$ so that $\varphi_\theta(B(\gamma_1(\theta),\ee_0))=B(0,1)$. 
We define ``truncated Green's function'':
\begin{equation*}
V_{r}(X)=\left\{
\begin{split}
&4\log\frac1{1-r} \qquad \mbox{ for }X\in B(0,1-r)\\
&4\log\frac1{|X|}\qquad \mbox{ for }X\in B(0,1)\setminus B(0,1-r)\\
\end{split}\right.
\end{equation*}
and
\begin{equation*}
v_{r,\theta}(x)=\left\{
\begin{split}
&0\qquad \mbox{ for }x\in \Om \setminus B(\gamma_1(\theta),\ee_0))\\
&V_{r}(\varphi_\theta(x))\quad \mbox{ for }x\in  B(\gamma_1(\theta),\ee_0).\\
\end{split}\right.
\end{equation*}
We set 
\begin{equation}
\label{def:h}
h(r,\theta)(x)=v_{r,\theta}(x), x\in\Om. 
\end{equation}
\par
\textit{Claim:} The function $h$ defined in \eqref{def:h} satisfies $h\in \mathcal D_\lambda$.
\par
We check (i).  As in \eqref{MT-posit}, we note that for  any $\delta\in (0,1)$ and for any $(r,\theta)$ it holds
\begin{equation*}
J_\lambda (h(r,\theta))\leqslant\frac12\|\nabla h(r,\theta)\|^2_2
-\lambda\log\int_\Om e^{(1-\delta)h(r,\theta)}-\lambda \log (\mathcal{P}([1-\delta,1]))
\end{equation*}
We have $\|\nabla h(r,\theta)\|^2_2=-32\pi\log(1-r)$
and
\begin{eqnarray*}
\int_\Om e^{(1-\delta)h(r,\theta)}\,dx&&=\ee_0^2\int_{B(0,1)}e^{(1-\delta ) V_{r} (X)}dX+|\Om|-\pi \ee_0^2\\
&&=2\pi\ee_0^2\left( \int_0^{1-r}e^{(1-\delta) 4 \log  \frac{1}{1-r}} \rho d\rho + \int_{1-r}^1 e^{(1-\delta) 4 \log  \frac{1}{\rho}} \rho d\rho \right)+O(1)\\
&&=\pi\ee_0^2\left[  \left(\frac{1}{ 1-r}\right)^{2-4\delta} + \frac{1}{1-2\delta} \left(\left(\frac{1}{ 1-r}\right)^{2-4\delta}  -1\right)\right]+O(1),
\end{eqnarray*}
where $O(1)$ is bounded independently of $(r,\theta)$.
It follows that if $0<\delta \leqslant 1/4$, then
\begin{equation*}
\log\int_\Om e^{(1-\delta )h} = (2-4\delta) \log  \frac{1}{1-r}+ O(1).
\end{equation*}
We conclude that
\[
J_\lambda(h) \leqslant 2\left( 8\pi -\lambda (1-2\delta) \right)\log \frac{1}{1-r}-\lambda\log \mathcal{P}([1-\delta,1])+O(1).
\]
Since $\lambda >8\pi, $ by choosing $\delta>0$ sufficiently small we conclude that $h(r,\theta)$ defined by \eqref{def:h} 
satisfies property~(i).
\par
We check (ii)--(iii).
To this aim it is sufficient to prove that $\lim_{r\rightarrow1}m(h(r,\theta))=\gamma_1(\theta)$
uniformly with respect to $\theta\in [0,2\pi)$. 
We consider again the measures 
\[
\mu_{r,\theta}=\frac{\int_I e^{\alpha h(r,\theta)}\mathcal{P}(d\alpha)}
{\iint_{I\times \Om} e^{\alpha h(r,\theta)}\mathcal{P}(d\alpha)}.
\]
We claim that
for every $\ee\in (0,\ee_0)$, we have  
$\lim_{r\rightarrow 1}\int_{B(\gamma_1(\theta),\ee)}\mu_{r,\theta}=1$ uniformly with respect to $\theta \in [0,2\pi)$.
Indeed, fix $\ee\in (0,\ee_0),$ and let $\delta=\delta(\ee)\in(0,1)$ such that $B(0,\delta)\subset\varphi_\theta( B(\gamma_1(\theta),\ee))$. 
We write
\begin{equation}
\label{4}
\int_{B(\gamma_1(\theta), \ee)}\mu_{r,\theta} =\frac{I+II}{I+III}
\end{equation}
where
\[
I=\iint_{I\times \varphi^{-1}_\theta\left( B(0, \delta)\right)}e^{\alpha v_{r,\theta}(x)}dx 
\]
and
\[
II=\iint_{I\times (B(\gamma_1(\theta),\ee)\setminus \varphi^{-1}_\theta \left( B(0, \delta)\right)}e^{\alpha v_{r,\theta}(x)}\,dx,
\qquad III=\iint_{I\times (\Om \setminus \varphi^{-1}_\theta \left( B(0, \delta)\right)}e^{\alpha v_{r,\theta}(x)}\,dx.
\]
Since $1\in $ supp$\mathcal{P}$, for every  $\bar\alpha \in (0,1]$, supp$\mathcal{P}\cap[\bar \alpha,1]\not= \emptyset$. 
Then, for any $r>1-\delta$ we have
\begin{align*}
I&\geqslant \int_{\bar\alpha} ^1 \int_{ \varphi^{-1}_\theta\left( B(0, \delta)\right)}e^{\alpha v_{r,\theta}(x)}\,dx\\
&\geqslant\ee_0^2\mathcal{P}([\bar\alpha,1]) \int_{B(0,\delta)} e^{\bar\alpha V_r (X)}dX\\
&=\ee_0^2\mathcal{P}([\bar\alpha,1]) \left[  \int_{B(0,1-r)} \frac{1}{(1-r)^{4\bar\alpha}} dX + \int_{B(0,\delta)\setminus B(0,1-r) }  \frac{1}{|X|^{4\bar\alpha}} dX\right]\\
&=\pi\ee_0^2\mathcal{P}([\bar\alpha,1])\left[(1-r)^{2-4\bar\alpha } + \frac{1}{2\bar \alpha -1}\left((1-r)^{2-4\bar\alpha}  -\delta^{2-4\bar\alpha }\right)\right] \\
\end{align*}
Hence, choosing  $\bar\alpha>1/2$, we derive $I\rightarrow+\infty$ as $r\rightarrow 1$
uniformly with respect to $\theta$. Moreover, 
\begin{equation*}
\label{B2}
0\leqslant II\leqslant III
\leqslant\pi\ee_0^2\left(\frac1{\delta^2} -1\right)+|\Om|,
\end{equation*}
Letting $r\to1$ in \eqref{4} we conclude that $\int_{B(\gamma_1(\theta),\ee)}\mu_{r,\theta}\to1$
for any $\ee>0$, uniformly with respect to $\theta$, and consequently $\mu_{h(r,\theta)}\stackrel{*}{\rightharpoonup}\delta_{\gamma(\theta)}$.
In turn, we derive $\lim_{r\rightarrow1}m(h(r,\theta))=\gamma_1(\theta)$, and therefore $h$
satisfies properties (ii)--(iii) in the definition of $\mathcal D_\lambda$.
We conclude that $h\in\mathcal D_\lambda$.
\end{proof}
We define the minimax value:
\[
c_\lambda= \inf_{h\in \mathcal D_\lambda} \sup_{(r,\theta)\in \mathbb D} J_\lambda(h(r,\theta)).
\]
In view of Lemma~\ref{lemma-dom-1}, we have $c_\lambda<+\infty$.
The following property relies on the nontrivial topology of $\Om$ in an essential way.
\begin{lem}
\label{lemma-dom-2}
For any $\lambda\in (8\pi,16\pi)$, $c_\lambda >-\infty.$
\end{lem}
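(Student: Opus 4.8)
The plan is to argue by contradiction, combining the concentration of low-energy sequences (Lemma~\ref{baricentri}, which rests on the improved Moser--Trudinger inequality of Proposition~\ref{mtmigliorata}) with a degree-theoretic obstruction coming from the nontrivial topology of $\Om$.

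\emph{Step 1: a uniform concentration estimate.} Fix a point $p$ lying in a bounded connected component $\mathcal O$ of $\Rd\setminus\overline\Om$ around which $\Gamma_1$ has nonzero winding number; such a $p$ exists by the assumption that the complement of $\Om$ contains a bounded region and by the non-contractibility of $\Gamma_1$ in $\Om$. Set $2d_p:=\mathrm{dist}(p,\overline\Om)>0$. First I would prove: for every $\ee>0$ there exists $L_\ee>0$ such that $J_\lambda(u)\leqslant -L_\ee$ implies $\mathrm{dist}(m(u),\overline\Om)<\ee$. Indeed, if this failed there would be a sequence $u_n\in H^1_0(\Om)$ with $J_\lambda(u_n)\to-\infty$ and $\mathrm{dist}(m(u_n),\overline\Om)\geqslant\ee_0$ for some fixed $\ee_0>0$; but since $\lambda\in(8\pi,16\pi)$, Lemma~\ref{baricentri} gives, along a subsequence, $m(u_n)\to x_0\in\overline\Om$, contradicting $\mathrm{dist}(m(u_n),\overline\Om)\geqslant\ee_0$. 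So the heavy analytic input is already packaged in Lemma~\ref{baricentri}; Step~1 is merely the resulting compactness statement.

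\emph{Step 2: the degree obstruction.} Suppose, for contradiction, that $c_\lambda=-\infty$. Then there is a sequence $h_n\in\mathcal D_\lambda$ with $\sup_{(r,\theta)\in\mathbb D}J_\lambda(h_n(r,\theta))\to-\infty$. Applying Step~1 with $\ee=d_p$, fix $n$ so large that $J_\lambda(h_n(r,\theta))\leqslant -L_{d_p}$ for every $(r,\theta)\in\mathbb D$; then $\mathrm{dist}(m(h_n(r,\theta)),\overline\Om)<d_p$, and the triangle inequality $\mathrm{dist}(p,m(h_n(r,\theta)))\geqslant 2d_p-d_p=d_p>0$ shows $m(h_n(r,\theta))\neq p$ for all $(r,\theta)\in\mathbb D$. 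On the other hand, by property~(ii) in the definition of $\mathcal D_\lambda$ the map $\Phi_n:=m\circ h_n$ extends to a continuous map $\overline{\mathbb D}\to\Rd$, and by property~(iii) its restriction to $\partial\mathbb D$ takes values in $\Gamma_1$ and has degree $1$ onto $\Gamma_1$. Since $\Gamma_1\subset\Om\subset\Rd\setminus\{p\}$ has winding number $\pm1$ about $p$, the loop $\Phi_n|_{\partial\mathbb D}$ has winding number $\pm1\neq0$ about $p$. But $\Phi_n$ maps the whole disk $\overline{\mathbb D}$ into $\Rd\setminus\{p\}$ (the value $p$ is not attained on $\mathbb D$ by the above, nor on $\partial\mathbb D$ since $p\notin\Gamma_1$), so $\Phi_n|_{\partial\mathbb D}$ is null-homotopic in $\Rd\setminus\{p\}$ and hence has winding number $0$ about $p$ --- a contradiction. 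Therefore $c_\lambda>-\infty$.

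\emph{Main difficulty.} The substantive content is the uniform concentration of Step~1, which I expect to be the main obstacle to make fully rigorous; however it reduces directly to Lemma~\ref{baricentri}, whose proof relies essentially on Proposition~\ref{mtmigliorata} and on $\lambda<16\pi$. Once that is available, what remains is the classical planar degree argument, and the only points demanding care are the correct placement of $p$ relative to $\Gamma_1$ (so that the degree-$1$ condition in the definition of $\mathcal D_\lambda$ translates into a nonzero winding number about $p$) and the verification that $\Phi_n$ is a globally continuous, $p$-avoiding map on $\overline{\mathbb D}$, so that non-extendability over the disk of a loop with nonzero winding number can legitimately be invoked.
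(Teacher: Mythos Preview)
Your proof is correct and follows essentially the same strategy as the paper: argue by contradiction, invoke Lemma~\ref{baricentri}, and use the degree obstruction coming from the non-contractible curve $\Gamma_1$ enclosing a bounded hole. The packaging, however, differs slightly. You first upgrade Lemma~\ref{baricentri} to a uniform statement (your Step~1: $J_\lambda(u)\leqslant -L_\ee\Rightarrow\mathrm{dist}(m(u),\overline\Om)<\ee$), and then use the ``no extension over the disk for a loop of nonzero winding number'' form of the degree argument to conclude that $\Phi_n=m\circ h_n$ cannot avoid $p$. The paper instead uses the dual formulation: from the degree condition it asserts $m(h(\mathbb D))\supset B$ for every $h\in\mathcal D_\lambda$, picks a fixed interior point $x_0\in B$, finds $(r_n,\theta_n)$ with $m(h_n(r_n,\theta_n))=x_0$, and then applies Lemma~\ref{baricentri} directly to the sequence $h_n(r_n,\theta_n)$ to force $x_0\in\overline\Om$, a contradiction. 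The paper's route is shorter because it bypasses the uniform Step~1 entirely; your route makes the topological mechanism more explicit and, arguably, more robust (you verify carefully that $\Phi_n$ is continuous and $p$-avoiding on all of $\overline{\mathbb D}$). Both are valid realizations of the same idea.
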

\begin{proof}
Denote by $B$ a bounded component of $\Rd\setminus \Om$ such that $\Gamma_1$ encloses $B$.
By the continuity and degree properties defining 
$\mathcal D_\lambda$, we have $m(h(\mathbb D))\supset B$ for all $h\in\mathcal D_\lambda$.
Arguing by contradiction, we assume  $c_\lambda=-\infty$. Then, there exists a sequence
$\{h_n\}\subset\mathcal D_\lambda$ such that $\sup_{(r,\theta)\in \mathbb D} J_\lambda(h_n(r,\theta))\rightarrow -\infty.$ 
Let $x_0$ be an interior point of $B$.  
For every $n$ we take $(r_n,\theta_n)\in \mathbb D$ such that $m(h_n(r_n,\theta_n))=x_0$.
In view of Lemma~\ref{baricentri}, there exists $\tilde x_0\in\overline\Om$ such that 
$m(h_n(r_n,\theta_n))\to\tilde x_0$. But then $x_0=\tilde x_0\in \stackrel{o}{B}\cap\overline\Om=\emptyset$,
a contradiction.
\end{proof}
\begin{lem}\label{lemma-dom-3}
For $8\pi<\lambda_1 \leqslant \lambda_2<16\pi$,  we have $\mathcal D_{\lambda_1}\subseteq \mathcal D_{\lambda_2}$.
\end{lem}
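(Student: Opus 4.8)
\textbf{Proof proposal for Lemma~\ref{lemma-dom-3}.}

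The plan is to show the set inclusion $\mathcal D_{\lambda_1}\subseteq\mathcal D_{\lambda_2}$ directly from the three defining properties, using only the monotonicity of $\lambda\mapsto J_\lambda(u)$ at each fixed $u$. First I would fix an arbitrary $h\in\mathcal D_{\lambda_1}$ and verify that $h\in\mathcal D_{\lambda_2}$ by checking properties (i), (ii), (iii) in turn. Properties (ii) and (iii) involve only the barycenter map $m(h(r,\theta))$, which depends on $h$ but \emph{not} on $\lambda$; hence they are inherited verbatim, with nothing to prove. The only real point is property~(i).

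For property~(i), the key observation is that for every $u\in H_0^1(\Om)$ the map $\lambda\mapsto J_\lambda(u)$ is non-increasing, since
\[
J_{\lambda_2}(u)-J_{\lambda_1}(u)=-(\lambda_2-\lambda_1)\log\left(\iint_{I\times\Om}e^{\alpha u}\,\mathcal P(d\alpha)\,dx\right),
\]
and, as already noted in the discussion preceding Proposition~\ref{mtmigliorata}, the quantity $\iint_{I\times\Om}e^{\alpha u}\,\mathcal P(d\alpha)\,dx$ is bounded below by a positive constant depending only on $\Om$ (indeed $\geqslant |\Om|\,e^{-\fint_\Om |u|}$, or more crudely $\geqslant c(\Om)>0$ once one recalls that $\mathcal P$ is a probability measure and uses Jensen's inequality together with $\int_\Om u = $ something controlled). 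Actually all that is needed is that this integral is $\geqslant c_0>0$ \emph{uniformly} on the relevant family, which follows from Jensen's inequality: $\iint_{I\times\Om}e^{\alpha u}\mathcal P(d\alpha)dx\geqslant |\Om|\exp\big(\fint_\Om\int_I\alpha u\,\mathcal P(d\alpha)\big)$ is not uniformly bounded below in general — so the cleaner route is to note that for the specific $h$ at hand, $\sup_\theta J_{\lambda_1}(h(r,\theta))\to-\infty$ already forces $\iint_{I\times\Om}e^{\alpha h(r,\theta)}\mathcal P(d\alpha)dx\to+\infty$ uniformly in $\theta$ (otherwise $J_{\lambda_1}$ would stay bounded below along a subsequence), whence $\log(\cdots)\geqslant 0$ for $r$ close to $1$, and therefore $J_{\lambda_2}(h(r,\theta))\leqslant J_{\lambda_1}(h(r,\theta))$ for all such $(r,\theta)$. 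Taking $\sup_\theta$ and letting $r\to1$ gives
\[
\lim_{r\to1}\sup_{\theta\in[0,2\pi)}J_{\lambda_2}(h(r,\theta))\leqslant\lim_{r\to1}\sup_{\theta\in[0,2\pi)}J_{\lambda_1}(h(r,\theta))=-\infty,
\]
so $h$ satisfies property~(i) for $\lambda_2$ as well.

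Combining the three verifications, $h\in\mathcal D_{\lambda_2}$, and since $h\in\mathcal D_{\lambda_1}$ was arbitrary we conclude $\mathcal D_{\lambda_1}\subseteq\mathcal D_{\lambda_2}$. The main (and only) obstacle is the bookkeeping for property~(i): one must be slightly careful that the sign of $\log\big(\iint e^{\alpha u}\mathcal P(d\alpha)dx\big)$ is eventually non-negative along the family, which is exactly where the blow-up ($J_{\lambda_1}\to-\infty$) is used; everything else is immediate from the $\lambda$-independence of $m$.
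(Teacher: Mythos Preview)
Your proposal is correct and takes essentially the same approach as the paper: the only $\lambda$-dependent condition in the definition of $\mathcal D_\lambda$ is~(i), and the key observation is that once $J_{\lambda_1}(u)\leqslant 0$ the non-negativity of the gradient term forces $\log\iint_{I\times\Om}e^{\alpha u}\,\calP(d\alpha)dx\geqslant 0$, whence $J_{\lambda_2}(u)\leqslant J_{\lambda_1}(u)$. Your write-up reaches this via a slight detour (the abandoned Jensen attempt and the unnecessary passage to $\iint\to+\infty$ when $\geqslant 1$ suffices), but the substance matches the paper's one-line argument.
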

\begin{proof}
It is sufficient to note that whenever  $J (u)\leqslant 0$ it is $\log \iint_{I\times \Om} e^{\alpha u} \geqslant 0,$ which implies that 
\[ 
 J_{\lambda_1}(u)\geqslant J_{\lambda_2}(u)  \qquad \mbox{ for } \quad  8\pi<\lambda_1 <\lambda_2<16\pi.
\]
Hence, $\mathcal D_{\lambda_1}\subseteq \mathcal D_{\lambda_2}$ for every $8\pi<\lambda_1 <\lambda_2<16\pi$.
\end{proof}
We set
\begin{equation}
\label{g0}
G(u)=  \log\left( \iint_{I\times \Om }e^{\alpha u}\mathcal{P}(d\alpha )\right)
\end{equation}
so that Neri's functional~\eqref{eq:Nerifunctional} takes the form
\[
J_\lambda (u)= \frac{1}{2}\int_{\Om }\left\vert \nabla u\right\vert
^{2}-\lambda G(u).
\] 
\begin{lem}
\label{proprg-uno}
The function $G:H_0^1(\Om)\rightarrow \R$ defined by \eqref{g0} satisfies assumptions~\eqref{calG}.
\end{lem}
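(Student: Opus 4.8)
I need to show that $G(u)=\log\left(\iint_{I\times\Om}e^{\alpha u}\,\mathcal P(d\alpha)\right)$ satisfies the two conditions in \eqref{calG}: that $G'$ is compact as a map $H_0^1(\Om)\to (H_0^1(\Om))'$, and that $\langle G''(u)\varphi,\varphi\rangle\geqslant0$ for all $u,\varphi$. I would first record the explicit formulas for the derivatives. Writing $Z(u)=\iint_{I\times\Om}e^{\alpha u}\,\mathcal P(d\alpha)\,dx$, one computes
\[
\langle G'(u),\varphi\rangle=\frac{1}{Z(u)}\iint_{I\times\Om}\alpha\,\varphi\,e^{\alpha u}\,\mathcal P(d\alpha)\,dx
=\int_\Om\varphi\,d\mu_u',
\]
where $d\mu_u'=Z(u)^{-1}\int_I\alpha e^{\alpha u}\,\mathcal P(d\alpha)\,dx$, and a second differentiation gives
\[
\langle G''(u)\varphi,\varphi\rangle
=\frac{1}{Z(u)}\iint_{I\times\Om}\alpha^2\varphi^2 e^{\alpha u}\,\mathcal P(d\alpha)\,dx
-\left(\frac{1}{Z(u)}\iint_{I\times\Om}\alpha\varphi\,e^{\alpha u}\,\mathcal P(d\alpha)\,dx\right)^2.
\]

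**Convexity.** The nonnegativity of $\langle G''(u)\varphi,\varphi\rangle$ is then just the Cauchy--Schwarz inequality on the probability space $([-1,1]\times\Om, Z(u)^{-1}e^{\alpha u}\,\mathcal P(d\alpha)\,dx)$: the second term is $(\mathbb E[\alpha\varphi])^2\leqslant\mathbb E[(\alpha\varphi)^2]=\mathbb E[\alpha^2\varphi^2]$, which is exactly the first term. So this part is immediate. (Note I do \emph{not} need the sign assumption \eqref{calH} here, only that $\mathcal P$ is a probability measure on $[-1,1]$.)

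**Compactness of $G'$.** This is the main point. I would show that if $u_n\rightharpoonup u$ weakly in $H_0^1(\Om)$, then $G'(u_n)\to G'(u)$ strongly in $(H_0^1(\Om))'$. By the Moser--Trudinger inequality \eqref{fontana} (and the stated compactness of the embedding $u\mapsto e^u$ into $L^1$), weak convergence in $H_0^1$ gives, after passing to a subsequence, $u_n\to u$ a.e. and $e^{\alpha u_n}\to e^{\alpha u}$ in $L^p(\Om)$ for every $p\in[1,\infty)$, uniformly for $\alpha\in[-1,1]$; in particular $Z(u_n)\to Z(u)>0$ and $\int_I\alpha e^{\alpha u_n}\,\mathcal P(d\alpha)\to\int_I\alpha e^{\alpha u}\,\mathcal P(d\alpha)$ in $L^p(\Om)$ (dominate the $\alpha$-integrand by $e^{|u_n|}\in L^p$ uniformly and use the already-established convergence together with dominated convergence in $\alpha$). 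Consequently the density $d\mu_{u_n}'/dx\to d\mu_u'/dx$ strongly in $L^p(\Om)$ for, say, $p=2$. Then for any $\varphi\in H_0^1(\Om)$ with $\|\varphi\|\leqslant1$,
\[
|\langle G'(u_n)-G'(u),\varphi\rangle|
=\left|\int_\Om\varphi\Big(\tfrac{d\mu_{u_n}'}{dx}-\tfrac{d\mu_u'}{dx}\Big)\,dx\right|
\leqslant\|\varphi\|_{L^2}\,\Big\|\tfrac{d\mu_{u_n}'}{dx}-\tfrac{d\mu_u'}{dx}\Big\|_{L^2}
\leqslant C\,\Big\|\tfrac{d\mu_{u_n}'}{dx}-\tfrac{d\mu_u'}{dx}\Big\|_{L^2}\to0,
\]
using the Sobolev embedding $H_0^1(\Om)\hookrightarrow L^2(\Om)$ for the bound $\|\varphi\|_{L^2}\leqslant C$. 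Since the limit does not depend on the subsequence (it is always $G'(u)$), the full sequence converges, and $G'$ is compact. The $\mathcal C^2$-regularity of $G$ follows from the same estimates (differentiating under the integral sign is justified because all relevant integrands are dominated, locally uniformly in $u$, by powers of $e^{|u|}$, which lie in every $L^p$ by \eqref{fontana}).

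**Expected obstacle.** The only delicate point is making the interchange of limits and the uniform-in-$\alpha$ control rigorous — i.e.\ upgrading a.e.\ convergence of $e^{\alpha u_n}$ to $L^p$ convergence with a dominating function independent of $n$ and $\alpha$. This is handled by the Moser--Trudinger bound: along the weakly convergent sequence, $\|\nabla u_n\|_2$ is bounded, so $\int_\Om e^{p|u_n|}\,dx$ is bounded for every $p$, which gives equi-integrability of $\{e^{p\alpha u_n}\}$ and hence, with a.e.\ convergence, $L^p$-convergence by Vitali's theorem. Everything else is routine.
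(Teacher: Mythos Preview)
Your proposal is correct and takes essentially the same approach as the paper: both invoke the compactness of the Moser--Trudinger embedding (Section~\ref{sect-mt}) for the compactness of $G'$, and the Cauchy--Schwarz (Schwarz) inequality for $\langle G''(u)\varphi,\varphi\rangle\geqslant0$. The paper's proof is much terser---it simply asserts that compactness of $G'$ follows from the compact embedding---whereas you have spelled out the details via $L^p$-convergence and Vitali's theorem.
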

\begin{proof}
For any $u,\phi,\psi\in H_0^1(\Om)$ we have:
\begin{equation*}
G^{\prime}(u)\phi=\iint_{I\times\Om}\frac{\alpha\phi e^{\alpha u}}{\iint_{I\times \Om }e^{\alpha u}},
\end{equation*}
and therefore the compactness of $G'$ follows by compactness of the Moser-Trudinger embedding 
as stated in Section \ref{sect-mt}. Moreover
\begin{equation*}
\left\langle G^{\prime\prime }(u)\phi ,\psi \right\rangle
=\frac{\left( \iint_{I\times \Om }\alpha ^{2} \phi \psi e^{\alpha u}\right) \left(\iint_{I\times \Om
}e^{\alpha u}\right)-\left(\iint_{I\times \Om}\alpha \phi  e^{\alpha u}\right)\left(\iint_{I\times\Om}\alpha\psi e^{\alpha u}\right)}
{\left(\iint_{I\times\Om} e^{\alpha u}\right)^2},
\end{equation*}
so that we obtain $\left\langle G^{\prime\prime}(u)\phi,\phi\right\rangle\geqslant0$ using the Schwarz inequality.
\end{proof}
Now we are able to prove Proposition~\ref{prop:criticalvalueonOmega}.
\begin{proof}[Proof of Proposition~\ref{prop:criticalvalueonOmega}]
In view of Lemma~\ref{proprg-uno}, Lemma~\ref{lemma-dom-1}, Lemma~\ref{lemma-dom-2} and Lemma~\ref{lemma-dom-3},
we may apply Proposition~\ref{struwe}
with $\mathcal H=H_0^1(\Om)$, $\mathcal G(u)=G(u)$,
$V=\mathbb D$, $A=-\infty$ and  $\mathcal F_\lambda=\mathcal D_\lambda$. 
\end{proof}
Now we are ready to prove Theorem~\ref{main-link}.
\begin{proof}[Proof of Theorem~\ref{main-link}] 
By Proposition~\ref{prop:criticalvalueonOmega},  for almost every $\lambda\in(8\pi,16\pi)$, 
the value $c_\lambda$ is a critical value for $J_\lambda$, 
which is achieved by a critical point $u\in H_0^1(\Om)$. 
Hence, for almost every $\lambda\in(8\pi,16\pi)$
we obtain a solution to \eqref{eq:Neri}.
Now we assume that $\calP$ satisfies \eqref{calH}.
Let $\lambda_0\in(8\pi,16\pi)$.
Using the first part of  Theorem~\ref{main-link}, 
there exists a solution sequence $(\lambda_n,\un)$ to \eqref{eq:Neri}
such that $\lambdan\in(8\pi,16\pi)$ and
$\lambda_{n}\rightarrow\lambda_0$.
In view of Proposition~\ref{prop:Dirichletmassquantization},
blow-up cannot occur for $\lambda_0\in\left(8\pi,16 \pi\right)$.
Therefore, $\un$ converges uniformly to a solution
$u_0$ to \eqref{eq:Neri} with $\lambda=\lambda_0$.
\end{proof}
\section{Proof of Theorem~\ref{main-mp}}
\label{sec:mp}
Let $\mathcal E=\{v\in H^1(M):\ \int_Mv\,dv_g=0\}$.
The variational functional for Neri's equation~\eqref{EL0} defined on a manifold $M$
is given by
\begin{equation*}
\label{Jlambda}
J_{\lambda }(v)=\frac{1}{2}\int_{M }\left\vert \nabla_g v\right\vert
^{2}dv_g -\lambda \log\left( \frac{1}{|M|} \iint_{I\times M }e^{\alpha v}\,\mathcal{P}(d\alpha )dv_g\right),
\end{equation*}
where $v\in\mathcal E$.
We begin by establishing the following.
\begin{prop}
\label{critic2}
For almost every $\lambda\in\left(8\pi,\frac{\mu _{1}(M )\left\vert M \right\vert}{\int_{I}\alpha ^{2}\mathcal{P}(d\alpha) } \right)$, 
there exists a mountain pass critical value $c_\lambda>0$ for $J_\lambda$.
\end{prop}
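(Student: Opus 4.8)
The plan is to realize $c_\lambda$ as a mountain--pass value of $J_\lambda$ on $\mathcal E$ and then to invoke the monotonicity trick of Proposition~\ref{struwe}, adapting the scheme of \cite{StTa} so as to account for the measure $\mathcal P$. Write
\[
J_\lambda(v)=\tfrac12\|\nabla_g v\|_2^2-\lambda\, G(v),\qquad
G(v)=\log\!\Big(\tfrac1{|M|}\iint_{I\times M}e^{\alpha v}\,\mathcal P(d\alpha)\,dv_g\Big),\qquad v\in\mathcal E,
\]
and note that $G(0)=0$, hence $J_\lambda(0)=0$. Exactly as in Lemma~\ref{proprg-uno}, $G'$ is compact (by the compactness of the Moser--Trudinger embedding recalled in Section~\ref{sect-mt}) and $\langle G''(v)\phi,\phi\rangle\geqslant0$ by the Cauchy--Schwarz inequality, so $\mathcal G=G$ satisfies \eqref{calG}.

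First I would check the mountain--pass geometry of $J_\lambda$ at the origin. Expanding the exponential and using $\int_M v\,dv_g=0$ (so that the first--order term of $G$ vanishes), one finds
\[
G(v)=\frac{\int_I\alpha^2\,\mathcal P(d\alpha)}{2|M|}\int_M v^2\,dv_g+o\big(\|\nabla_g v\|_2^2\big)\qquad\text{as }\ \|\nabla_g v\|_2\to0,
\]
the superquadratic remainder being controlled via $|e^{\alpha v}-1-\alpha v-\tfrac12\alpha^2v^2|\leqslant C|v|^3e^{|v|}$, H\"older's inequality and the Moser--Trudinger inequality (which bounds $\int_M e^{q|v|}$ in terms of $\|\nabla_g v\|_2$, since $\int_M v=0$). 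Combined with the Rayleigh bound $\int_M v^2\leqslant\mu_1(M)^{-1}\int_M|\nabla_g v|^2$ coming from \eqref{mu1}, this yields
\[
J_\lambda(v)\geqslant\frac12\Big(1-\frac{\lambda\int_I\alpha^2\,\mathcal P(d\alpha)}{\mu_1(M)\,|M|}\Big)\|\nabla_g v\|_2^2+o\big(\|\nabla_g v\|_2^2\big).
\]
Since $\lambda<\mu_1(M)|M|/\int_I\alpha^2\mathcal P(d\alpha)$, the leading coefficient is strictly positive, so there exist $\rho>0$ and $\beta>0$ such that, with $B_\rho=\{v\in\mathcal E:\|\nabla_g v\|_2<\rho\}$, one has $J_\lambda(v)>0$ for $v\in\overline{B_\rho}\setminus\{0\}$ and $J_\lambda(v)\geqslant\beta$ whenever $\|\nabla_g v\|_2=\rho$. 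On the other hand, since $\lambda>8\pi$ and $\mathrm{supp}\,\mathcal P\cap\{-1,1\}\neq\emptyset$, the functional $J_\lambda$ is unbounded from below on $\mathcal E$ — the compact--manifold analogue of Proposition~\ref{prop:TM}, established in \cite{RZ} — so there is $w\in\mathcal E$ with $J_\lambda(w)\leqslant-1$; necessarily $\|\nabla_g w\|_2>\rho$.

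Next I would set up the minimax class. Take $V=(0,1)$, $A=0$, and
\[
\mathcal F_\lambda=\big\{f\in\mathcal C([0,1];\mathcal E):\ f(0)=0,\ \ J_\lambda(f(1))\leqslant-1\big\}.
\]
Then $\mathcal F_\lambda\neq\emptyset$ (join $0$ to $w$ by a segment), and for $\lambda'<\lambda$ we have $\mathcal F_{\lambda'}\subseteq\mathcal F_\lambda$: if $J_{\lambda'}(f(1))\leqslant-1<0$ then $G(f(1))\geqslant\tfrac1{2\lambda'}\|\nabla_g f(1)\|_2^2\geqslant0$, hence $J_\lambda(f(1))\leqslant J_{\lambda'}(f(1))\leqslant-1$, exactly as in Lemma~\ref{lemma-dom-3}; moreover the boundary properties $\mathscr P(\partial V)$ entail \eqref{limsup} with $A=0$, because $J_\mu(f(0))=0$ and $J_\mu(f(1))\leqslant-1$ for the relevant $\mu$. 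Setting
\[
c_\lambda=\inf_{f\in\mathcal F_\lambda}\ \sup_{\theta\in[0,1]}J_\lambda(f(\theta)),
\]
we have $0<\beta\leqslant c_\lambda<+\infty$ for every $\lambda$ in the interval: finiteness since $\mathcal F_\lambda\neq\emptyset$ and $J_\lambda\circ f$ is continuous on the compact $[0,1]$; and $c_\lambda\geqslant\beta$ because any $f\in\mathcal F_\lambda$ joins $0$ to $f(1)\notin\overline{B_\rho}$ (there $J_\lambda\geqslant0$, with equality only at $0$, while $J_\lambda(f(1))\leqslant-1$), so $\theta\mapsto\|\nabla_g f(\theta)\|_2$ takes the value $\rho$, on which sphere $J_\lambda\geqslant\beta$. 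Since $\lambda^{-1}c_\lambda$ is non--increasing, $c'_\lambda$ exists for a.e.\ $\lambda$ in the interval; for each such $\lambda$, Proposition~\ref{struwe} (with $\mathcal H=\mathcal E$, $\mathcal G=G$, $V=(0,1)$, $A=0$, and $c_\lambda>A$ granted by the collar estimate) produces $v_0\in\mathcal E$ with $J_\lambda(v_0)=c_\lambda$ and $J'_\lambda(v_0)=0$, i.e.\ a solution of~\eqref{EL0}; since $J_\lambda(v_0)=c_\lambda>0=J_\lambda(0)$ it is non--trivial, and $c_\lambda$ is the asserted mountain--pass value.

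The delicate step is the local--minimum estimate at the origin: making the superquadratic part of $G$ genuinely negligible near $0$, and thereby obtaining a strictly positive collar height $\beta$, is precisely what forces the sharp threshold $\lambda\int_I\alpha^2\mathcal P(d\alpha)<\mu_1(M)|M|$, and it is here that the Moser--Trudinger inequality of Section~\ref{sect-mt} enters; the remaining ingredients (non--emptiness, nesting, the monotonicity trick) are direct analogues of the corresponding steps in Section~\ref{sec:minimax}.
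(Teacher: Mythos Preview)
Your argument is correct and follows the paper's approach: establish the mountain--pass geometry via the Taylor expansion of $G$ at $0$ (the paper's Lemma~\ref{mount-pass-struct}) and the unboundedness of $J_\lambda$ from \cite{RZ}, show $c_\lambda>0$ (the paper's Lemma~\ref{cposit2}), and apply Proposition~\ref{struwe} with $\mathcal H=\mathcal E$, $V=[0,1]$, $A=0$. The only cosmetic difference is that the paper fixes a single endpoint $v_1$ (independent of $\lambda$ on compact subintervals) so that the path class $\Gamma$ does not depend on $\lambda$ and the nesting is trivial, whereas you impose the $\lambda$--dependent endpoint condition $J_\lambda(f(1))\leqslant-1$ and recover the nesting $\mathcal F_{\lambda'}\subseteq\mathcal F_\lambda$ from $G\geqslant0$; both choices are valid.
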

It is convenient to set
\begin{equation*}
\label{g}
G(v)=  \log\left( \frac{1}{|M|} \iint_{I\times M }e^{\alpha v}\mathcal{P}(d\alpha)dv_g\right)
\end{equation*}
so that $J_\lambda (v)= \frac{1}{2}\int_{M }\left\vert \nabla v\right\vert^{2}-\lambda G(v)$.
Henceforth, throughout this subsection, for simplicity we denote $\nabla=\nabla_g$, $\Delta=\Delta_g,$
$dx=dv_g$, and we omit the integration measure when it is clear from the context.
Then,
\begin{equation*}
G^{\prime}(v)\phi=\iint_{I\times M }\frac{\alpha e^{\alpha v}\phi }{\iint_{I\times M }e^{\alpha v}}
\end{equation*}
and
\begin{equation*}
\label{g2}
\left\langle G^{\prime\prime}(v)\phi,\psi\right\rangle
=\frac{\left(\iint_{I\times M}\alpha^{2}e^{\alpha v}\phi\psi\right)
\left(\iint_{I\times M}e^{\alpha v}\right) - \left( \iint_{I\times M} \alpha e^{\alpha v}\phi\right)
\left(\iint_{I\times M }\alpha e^{\alpha v}\psi\right)}{\left(\iint_{I\times M } e^{\alpha v}\right)^2 }.
\end{equation*}
In particular, 
$G(0)=0$; in view of Jensen's inequality we have $G(v)\geqslant0$ for every $v\in \mathcal E$; 
$G'(0)=0$ and $G'$ is compact in view of Section \ref{sect-mt}.
Furthermore, the Schwarz inequality implies that
$\langle G''(v)\varphi,\varphi\rangle\geqslant0$ for all $\varphi\in\mathcal E$,
and we compute
\begin{equation}
\label{Gsecond(0)}
\|G''(0)\|:=\inf_{\phi\in\mathcal E\setminus\{0\}}
\frac{\langle G''(0)\phi,\phi\rangle}{\|\nabla\phi\|_2^2}
=\frac{\int_I\alpha^2\,\mathcal P(d\alpha)}{\mu_1(M)|M|},
\end{equation}
where $\mu_1(M)$ is the first nonzero eigenvualue defined in \eqref{mu1}.
\begin{lemma}
\label{mount-pass-struct}
Let $\mathcal{P}$ satisfy assumption~\eqref{assumpt:suppP1}
and suppose $8\pi<\frac{\mu _{1}(M)\left\vert M\right\vert}{\int_{I}\alpha^{2}\mathcal{P}(d\alpha)}$. 
If $\lambda<\frac{\mu _{1}(M)\left\vert M\right\vert}{\int_{I}\alpha^{2}\mathcal{P}(d\alpha)}$,
then $v\equiv0$ is a strict local minimum for $J_\lambda$.
Moreover, if $\lambda>8\pi$, then there exists $v_1\in\mathcal E$ such that $\|v_1\|\geqslant1$ and $J_\lambda(v_1)<0$.
In particular,  $J_{\lambda }$ has a mountain-pass geometry for each $\lambda\in \left(8\pi,
\frac{\mu _{1}(M)\left\vert M\right\vert}{\int_{I}\alpha^{2}\mathcal{P}(d\alpha)}\right)$.
\end{lemma}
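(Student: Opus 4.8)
The plan is to split the statement into its two geometric ingredients --- that $v\equiv0$ is a strict local minimum of $J_\lambda$ and that $J_\lambda$ is negative at a point far from $0$ --- and then to read off the mountain-pass picture. For the strict local minimum I would first record that $G(0)=0$ and $G'(0)=0$ (both noted above, the vanishing of $G'(0)$ coming from $\int_M\phi\,dv_g=0$ for $\phi\in\mathcal E$), so that $J_\lambda(0)=0$ and $J_\lambda'(0)=0$. Since $G\in\mathcal C^2(\mathcal E;\mathbb R)$ by the Moser--Trudinger embedding of Section~\ref{sect-mt}, Taylor's formula at $0$ gives
\[
J_\lambda(v)=\tfrac12\|\nabla v\|_2^2-\tfrac{\lambda}{2}\langle G''(0)v,v\rangle+o(\|\nabla v\|_2^2)\qquad\text{as }v\to0\text{ in }\mathcal E,
\]
and \eqref{Gsecond(0)}, namely $\langle G''(0)v,v\rangle\leqslant\frac{\int_I\alpha^2\,\calP(d\alpha)}{\mu_1(M)|M|}\|\nabla v\|_2^2$, then yields
\[
J_\lambda(v)\geqslant\tfrac12\Big(1-\tfrac{\lambda\int_I\alpha^2\,\calP(d\alpha)}{\mu_1(M)|M|}\Big)\|\nabla v\|_2^2+o(\|\nabla v\|_2^2).
\]
Under the hypothesis $\lambda<\frac{\mu_1(M)|M|}{\int_I\alpha^2\,\calP(d\alpha)}$ the bracket is a fixed positive number; since $\|\nabla\cdot\|_2$ is an equivalent norm on $\mathcal E$ by Poincar\'e, I would conclude that there exist $c_0>0$ and $\rho\in(0,1]$ with $J_\lambda(v)\geqslant c_0\|\nabla v\|_2^2>0=J_\lambda(0)$ whenever $0<\|\nabla v\|_2\leqslant\rho$.

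For the far-away point I would invoke the analogue of Proposition~\ref{prop:TM} on $M$ --- established for zero-mean functions in \cite{RZ} under assumption~\eqref{assumpt:suppP1} --- which asserts $\inf_{v\in\mathcal E}J_\lambda(v)=-\infty$ as soon as $\lambda>8\pi$. Choosing $v_n\in\mathcal E$ with $J_\lambda(v_n)\to-\infty$ and using the Moser--Trudinger bound $\iint_{I\times M}e^{\alpha v}\,\calP(d\alpha)\leqslant\int_M e^{|v|}\leqslant C\,e^{\|\nabla v\|_2^2/16\pi}$, one gets $J_\lambda(v_n)\geqslant(\tfrac12-\tfrac{\lambda}{16\pi})\|\nabla v_n\|_2^2-C\lambda$, so boundedness of $\{\|\nabla v_n\|_2\}$ would contradict $J_\lambda(v_n)\to-\infty$; hence $\|\nabla v_n\|_2\to\infty$ and taking $v_1:=v_n$ for large $n$ gives $\|v_1\|\geqslant1$ and $J_\lambda(v_1)<0$. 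Since the hypothesis $8\pi<\frac{\mu_1(M)|M|}{\int_I\alpha^2\,\calP(d\alpha)}$ makes the interval $(8\pi,\frac{\mu_1(M)|M|}{\int_I\alpha^2\,\calP(d\alpha)})$ nonempty and both conditions then hold for $\lambda$ there, the two steps combine into the mountain-pass geometry: $0$ is a strict local minimum with $\inf_{\|\nabla v\|_2=\rho}J_\lambda\geqslant c_0\rho^2>0$, whereas $\|v_1\|\geqslant1>\rho$ and $J_\lambda(v_1)<0$.

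I expect the only delicate point to be the passage, in the first step, from positivity of the Hessian $J_\lambda''(0)$ to a genuine strict local minimum --- something that can fail in infinite dimensions --- but here it is harmless because $\mu_1(M)>0$ makes the Dirichlet form coercive on $\mathcal E$, so the quadratic term dominates the $\mathcal C^2$-Taylor remainder on a whole neighbourhood of $0$. The other essential input is the manifold version of Proposition~\ref{prop:TM}, and that is the only place assumption~\eqref{assumpt:suppP1} enters.
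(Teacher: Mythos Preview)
Your argument is correct. The first step --- Taylor expansion of $J_\lambda$ at $0$ together with \eqref{Gsecond(0)} --- is exactly the paper's approach, and your remark about coercivity via $\mu_1(M)>0$ is the right way to justify that the quadratic lower bound really gives a strict local minimum.

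For the second step you take a genuinely different route. The paper \emph{constructs} $v_1$ explicitly: it fixes $p_0\in M$, builds the standard one-parameter family of ``bubble'' functions $\tilde v_\ee$ concentrating at $p_0$ (the zero-mean truncated Liouville profiles), computes $\int_M|\nabla\tilde v_\ee|^2=32\pi\log\frac1\ee+O(1)$ and $\log\int_M e^{\tilde v_\ee}=2\log\frac1\ee+O(1)$, and combines these with the estimate $J_\lambda(v)\leqslant(1-\delta)^{-2}I_{(1-\delta)^2\lambda}((1-\delta)v)-\lambda\log\calP([1-\delta,1])$ (valid because $\alpha\mapsto\int_Me^{\alpha v}$ is nondecreasing on $\mathcal E$) to see directly that $J_\lambda(\tilde v_\ee/(1-\delta))\to-\infty$ as $\ee\to0$. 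You instead invoke the manifold version of Proposition~\ref{prop:TM} from \cite{RZ} as a black box to get a sequence $v_n$ with $J_\lambda(v_n)\to-\infty$, and then use the Moser--Trudinger upper bound to force $\|\nabla v_n\|_2\to\infty$. This is shorter and perfectly valid for the lemma as stated. The paper's constructive approach, however, buys something you do not get for free: immediately after the proof the paper observes that $v_1$ can be chosen \emph{independently of $\lambda$} on compact subintervals of $(8\pi,\mu_1(M)|M|/\int_I\alpha^2\calP(d\alpha))$, which is what makes the path class $\Gamma$ in the subsequent minimax argument $\lambda$-independent. Your abstract argument can be adapted to yield this (pick $v_1$ at the right endpoint of the compact interval and note $J_\lambda(v_1)\leqslant J_{\lambda'}(v_1)$ for $\lambda\geqslant\lambda'$ since $G\geqslant0$), but it is worth being aware that the explicit construction carries this extra information automatically.
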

\begin{proof}
By Taylor expansion and by properties of $G$, we have
\begin{equation}
\label{JTaylorat0}
J_\lambda(\phi)\geqslant\frac12\left(1-\lambda\|G''(0)\|\right)\|\phi\|^2+o(\|\phi\|^2)
\end{equation}
Therefore, in view of \eqref{Gsecond(0)}, $v\equiv0$ is a strict local minimum for
$J_\lambda$ whenever $\lambda<\|G''(0)\|^{-1}=\frac{\mu _{1}(M)\left\vert M\right\vert}{\int_{I}\alpha^{2}\mathcal{P}(d\alpha)}$.
We recall from \cite{RZ} that $\int_Me^{\alpha v}$ is increasing with respect to $\alpha$ for all $v\in\mathcal E$.
Indeed,
\[
\frac{d}{d\alpha}\int_Me^{\alpha v}=\int_Mve^{\alpha v}
=\int_{v\geqslant0}ve^{\alpha v}-\int_{v<0}|v|e^{\alpha v}\geqslant0.
\]
Consequently, similarly as in \eqref{MT-posit}, we compute
\begin{align*}
J_\lambda(v)\leqslant\frac1{(1-\delta)^2}\left[\frac12\int_M|\nabla(1-\delta)v|^2
-(1-\delta)^2\lambda\log\int_Me^{(1-\delta)v}\right]-\lambda\log\calP([1-\delta,1]).
\end{align*}
We fix $p_0\in  M$ and $r_0>0$ a constant smaller than the injectivity
radius of $M$ at $p_0$.  For every $\ee>0$, we define
\begin{equation*}
{v}_{\epsilon }(p)=\left\{
\begin{array}{ccc}
\log \frac{\epsilon ^{2}}{(\epsilon ^{2}+ d_g (p,p_0)^2 )^{2}}
& \text{in }\mathcal{B}_{r_{0}}&  \\
\log \frac{\epsilon ^{2}}{(\epsilon ^{2}+r_{0}^{2})^{2}} & \text{otherwise,}
&
\end{array}
\right.
\end{equation*}
where $\mathcal B_{r_0} = \{ p \in M : d_g (p, p_0)  < r_0\}$  denotes the geodesic
ball of radius $r_0$ centered at $p_0$. 
We set
\begin{equation*}
\tilde v_{\epsilon }(x):={v}_{\epsilon }(x)-\frac{1}{\left\vert M
\right\vert }\int_{M }{v}_{\epsilon } dv_g.
\end{equation*}
so that $\tilde v_\ee \in \mathcal E$.
A standard computation  yields
\begin{equation*}\label{v_ee}
\int_M |\nabla_g\tilde v_\ee |^2dv_g= 32 \pi \log\frac1\ee+O(1),\\
\end{equation*}
and
\begin{equation*}
\log\int_Me^{\tilde v_\ee}=2\log\frac1\ee+O(1)
\end{equation*}
where $O(1)$ is independent of $\ee$.
See, e.g.,  \cite{R} for the details.
Then,
\[
J_\lambda\left(\frac{\tilde v_\ee}{1-\delta}\right)
\leqslant\frac2{(1-\delta)^2}\left[8\pi 
-(1-\delta)^2\lambda\right]\log\frac1\ee+O(1).
\]
It follows for any $\lambda>8\pi$ there exists $0<\delta\ll1$
such that $J_\lambda\left(\tilde v_\ee/(1-\delta)\right)\to-\infty$ as $\ee\to0$.
Since $\|\tilde v_\ee\|\to+\infty$ as $\ee\to0$, taking $v_1=v_\ee$
with $\ee$ sufficiently small, we conclude the proof.
\end{proof}
We note that if $\lambda$
belongs to a compact subset of $\left(8\pi,
\frac{\mu _{1}(M)\left\vert M\right\vert}{\int_{I}\alpha^{2}\mathcal{P}(d\alpha)}\right)$, the function $v_1$ may be chosen independently of $\lambda$. 
We denote by 
$\Gamma$ the set of paths
\[
\Gamma= \{\gamma\in \mathcal C([0,1], \mathcal E) : \gamma(0)=0, \gamma(1)=v_1\}.
\]
We set
\[
c_\lambda=\inf_{\gamma\in \Gamma} \sup_{t\in [0,1]} J_\lambda (\gamma(t)).
\]
Note  that the value $c_\lambda$ is finite and non negative. In particular, in the following lemma we prove that  for  
$\lambda \in \left( 8\pi,\frac{\mu _{1}(M)\left\vert M\right\vert}{\int_{I}\alpha^{2}\mathcal{P}(d\alpha)}\right),$  $c_\lambda$ is 
strictly positive.
\begin{lem}
\label{cposit2}
Let  $\lambda \in \left( 8\pi,\frac{\mu _{1}(M)\left\vert M\right\vert}{\int_{I}\alpha^{2}\mathcal{P}(d\alpha)} \right).$  For every $\ee >0, $ there exists $\rho_\ee >0$ (independent on $\lambda$), such that
\[
c_\lambda\geqslant\frac{\rho^2_\ee}{2}\left(1-\lambda\,\frac{\int_{I}\alpha^{2}\mathcal{P}(d\alpha)}{\mu _{1}(M)\left\vert M\right\vert}-\ee\right).
\]

\end{lem}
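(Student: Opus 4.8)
\noindent\emph{Proof proposal.} The plan is to quantify the mountain-pass geometry of $J_\lambda$ at the origin, using the Taylor expansion \eqref{JTaylorat0} together with the value $\|G''(0)\|=\frac{\int_I\alpha^2\mathcal P(d\alpha)}{\mu_1(M)|M|}$ from \eqref{Gsecond(0)}, and then to invoke the elementary fact that every path in $\Gamma$ must cross each sufficiently small sphere centered at $0\in\mathcal E$.

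First I would record a second-order remainder estimate for $G$ near the origin which is \emph{uniform in $\lambda$}. Since $G\in\mathcal C^2(\mathcal E;\R)$ with $G(0)=0$, $G'(0)=0$, and $G$ does not depend on $\lambda$, there is a nondecreasing function $g:(0,\infty)\to(0,\infty)$, depending only on $G$, with $g(\rho)\to0$ as $\rho\to0^+$, such that
\[
\Bigl|\,G(\phi)-\tfrac12\langle G''(0)\phi,\phi\rangle\,\Bigr|\leqslant g(\|\phi\|)\,\|\phi\|^2\qquad\text{for all }\phi\in\mathcal E .
\]
Set $\Lambda:=\frac{\mu_1(M)|M|}{\int_I\alpha^2\mathcal P(d\alpha)}$, so that $\|G''(0)\|=\Lambda^{-1}$. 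For any $\lambda\in(8\pi,\Lambda)$ and any $\phi\in\mathcal E$ with $\|\phi\|=\rho$ we then obtain, using $0<\lambda<\Lambda$,
\[
J_\lambda(\phi)=\tfrac12\rho^2-\lambda G(\phi)\geqslant\tfrac12\rho^2\bigl(1-\lambda\Lambda^{-1}\bigr)-\lambda\,g(\rho)\,\rho^2\geqslant\tfrac{\rho^2}{2}\bigl(1-\lambda\Lambda^{-1}-2\Lambda\,g(\rho)\bigr).
\]
Given $\ee>0$ I would now choose $\rho_\ee\in(0,1)$ so small that $2\Lambda\,g(\rho_\ee)\leqslant\ee$; this $\rho_\ee$ depends only on $\ee$ (through $g$ and the fixed number $\Lambda$), as claimed, and it yields $J_\lambda(\phi)\geqslant\frac{\rho_\ee^2}{2}\bigl(1-\lambda\Lambda^{-1}-\ee\bigr)$ for every admissible $\lambda$ and every $\phi$ with $\|\phi\|=\rho_\ee$.

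Finally, let $\gamma\in\Gamma$ be arbitrary. By Lemma~\ref{mount-pass-struct} we may take $v_1$ with $\|v_1\|\geqslant1$, so $\gamma(0)=0$ and $\|\gamma(1)\|=\|v_1\|\geqslant1>\rho_\ee$; by continuity of $t\mapsto\|\gamma(t)\|$ there is $t_0\in(0,1)$ with $\|\gamma(t_0)\|=\rho_\ee$, whence
\[
\sup_{t\in[0,1]}J_\lambda(\gamma(t))\geqslant J_\lambda(\gamma(t_0))\geqslant\tfrac{\rho_\ee^2}{2}\bigl(1-\lambda\Lambda^{-1}-\ee\bigr).
\]
Taking the infimum over $\gamma\in\Gamma$ gives $c_\lambda\geqslant\frac{\rho_\ee^2}{2}\bigl(1-\lambda\Lambda^{-1}-\ee\bigr)$, which is the assertion. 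I expect the only point needing genuine care to be the uniformity in $\lambda$ of the remainder $g$, together with the boundedness of the admissible $\lambda$-interval used to absorb the factor $\lambda$ into $\Lambda$; the remaining steps are routine. Note moreover that the inequality is trivially true when its right-hand side is nonpositive (as may happen for $\lambda$ close to $\Lambda$), since $c_\lambda\geqslant0$.
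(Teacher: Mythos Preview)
Your argument is correct and follows the same route as the paper's own proof: use the second-order Taylor expansion of $J_\lambda$ at $0$ together with \eqref{Gsecond(0)} to get a uniform lower bound on the sphere $\|\phi\|=\rho_\ee$, then note that every $\gamma\in\Gamma$ crosses that sphere. You are simply more explicit than the paper about why $\rho_\ee$ may be chosen independently of $\lambda$ (isolating the $\lambda$-free remainder of $G$ and absorbing the factor $\lambda$ via $\lambda<\Lambda$) and about the intermediate value step; the paper compresses both points into a single appeal to \eqref{JTaylorat0} and ``continuity of $\gamma$''.
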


\begin{proof}
In view of the Taylor expansion \eqref{JTaylorat0}, $\ee>0$ there exists a constant $\rho_\ee \in (0,1)$ independent of $\lambda$, 
such that for any $v\in \mathcal E$ satisfying $\|v\| \leqslant \rho_\ee$ 
we have
\[
J_\lambda(v)\geqslant \frac{1}2\left(1-{\lambda \,\frac{\int_{I}\alpha^{2}\mathcal{P}(d\alpha)}{\mu _{1}(M)\left\vert M\right\vert} }  
- \ee\right)\|v\|^2.
\]
In particular, for any $v\in \mathcal E$ such that $\|v\|=\rho_\ee$ we have
\[
J_\lambda(v)\geqslant \frac{\rho_\ee^2}2\left(  1-  {\lambda \|G''(0)\|}- \ee\right).
\]
By continuity of $\gamma\in \Gamma $, we derive in turn that
\[
\sup_{t\in[0,1]}J_\lambda(\gamma(t))\geqslant\frac{\rho_\ee^2}2\left(1- {\lambda\,\frac{\int_{I}\alpha^{2}\mathcal{P}(d\alpha)}{\mu _{1}(M)\left\vert M\right\vert}} - \ee\right),
\]
and asserted strict positivity of $c_\lambda$ follows.
\end{proof}

\begin{proof}[Proof of Proposition~\ref{critic2}]
We use the
Struwe's Monotonicity Trick as stated in  Proposition~\ref{struwe} with $\mathcal H=\mathcal E,$ $V=[0,1],$ $\mathcal F_\lambda =\Gamma$ and $A=0.$ 
\end{proof}
Now we are ready to prove Theorem~\ref{main-mp}.
\begin{proof}[Proof of Theorem \ref{main-mp}] 
Let  $\lambda\in\left(8\pi,\frac{\mu _{1}(M)\left\vert M\right\vert}{\int_{I}\alpha^{2}\mathcal{P}(d\alpha)}\right)$ 
be such that $c'_\lambda$ exists.  
By  Proposition~\ref{critic2},  we have that $c_\lambda$ is a critical value  for $J_\lambda$, 
which is achieved by a critical point $v\in \mathcal E$. Such a $v$ is a solution to problem \eqref{EL0}. This proves the first part of Theorem~\ref{main-mp}.
To prove the second part of  Theorem~\ref{main-mp}, assume that $\calP$ satisfies \eqref{calH}. Let  $\lambda_0\in \left(8\pi,\frac{\mu _{1}(M)\left\vert M\right\vert}{\int_{I}\alpha^{2}\mathcal{P}(d\alpha)}\right)$. 
By the first part of Theorem~\ref{main-mp} there exists
a solution sequence $(\lambdan,v_n)$ to \eqref{EL0}
with $\lambda=\lambdan$,
such that $\lambdan\to\lambda_0$.
In view of the mass quantization, as stated in Proposition~\ref{prop:manifoldmassquantization},
blow-up cannot occur.
Therefore, $\vn$ converges uniformly to a solution $v_0$
for problem~\eqref{EL0} with $\lambda=\lambda_0$.
\end{proof}
\section*{Acknowledgements}
We thank Professor Takashi Suzuki for introducing us to mean field equations
motivated by turbulence, for many interesting discussions and for pointing out reference \cite{ye}.
This research was carried out within the Marie Curie program FP7-MC-IRSES-247486
\textit{Mathematical studies on critical non-equilibrium phenomena via mean field theories
and theories on nonlinear partial differential equations.}

\end{document}